\newtheorem{theorem}{Theorem}[section]
\newtheorem{lemma}[theorem]{Lemma}
\newtheorem{corollary}[theorem]{Corollary}
\newtheorem{conjecture}[theorem]{Conjecture}
\title{More results on the distance (signless) Laplacian eigenvalues of graphs\thanks{Supported by National Natural Science Foundation of China (No. 11471121).}}
\author{ Jie Xue$^{a}$,~~Huiqiu Lin$^{b}$\thanks{Corresponding author. Email:~jie\_xue@126.com (J. Xue), huiqiulin@126.com (H. Lin).},~~~Kinkar Ch. Das$^{c}$,~ Jinlong Shu$^{a}$
\\
{\footnotesize $^a$Department of Computer Science and Technology, East China Normal University, Shanghai, PR China }\\
{\footnotesize $^b$Department of Mathematics, East China University of Science and Technology, Shanghai, PR China}\\
{\footnotesize $^c$Department of Mathematics, Sungkyunkwan University, Suwon, Republic of Korea}}
\date{} 
\begin{document}
\maketitle

\begin{abstract}
Let $G$ be a connected graph with vertex set $V(G)$ and edge set $E(G)$. Let $Tr(G)$ be the diagonal matrix of vertex transmissions of $G$ and $D(G)$ be the distance matrix of $G$.
The distance Laplacian matrix of $G$ is defined as $\mathcal{L}(G)=Tr(G)-D(G)$. The distance signless Laplacian matrix of $G$ is defined as $\mathcal{Q}(G)=Tr(G)+D(G)$. In this paper, we give a lower bound on the distance Laplacian spectral radius in terms of $D_1$, as a consequence, we show that $\partial_1^L(G)\geq n+\lceil\frac{n}{\omega}\rceil$ where $\omega$ is the clique number of $G$. Furthermore, we give some graft transformations, by using them, we characterize the extremal graph attains the maximum distance spectral radius in terms of $n$ and $\omega$. Moreover, we also give bounds on the distance signless Laplacian eigenvalues of $G$, and give a confirmation on a conjecture due to Aouchiche and Hansen \cite{AM3}.

\bigskip
\noindent {\bf AMS Classification:} 05C50

\noindent {\bf Key words:} Distance Laplacian matrix; Distance signless Laplacian matrix; Distance Laplacian eigenvalue; Distance signless Laplacian eigenvalue
\end{abstract}

\section{Introduction}
\hspace*{\parindent}All graphs considered here are simple,
undirected and connected. Let $G$ be a graph with vertex set $V(G)$
and edge set $E(G)$. For $u,v\in V(G)$, we denote by $u\thicksim v$
if $u$ is adjacent to $v$, and $u\not\thicksim v$ otherwise. Let
$N_{G}(v)$ be the neighborhood of $v$ and $d(v)=|N_{G}(v)|$ be the
degree of $v$. We denote by $\delta(G)$ and $\Delta(G)$ the minimum
and maximum degrees of the vertices of $G$. The distance between
vertices $u$ and $v$, denoted by $d_{G}(u,v)$ or simply by $d(u,v)$,
is the length of a shortest path from $u$ to $v$. The \emph{diameter}
$\mathrm{diam}(G)$ is the maximum distance between any two vertices
of $G$. The \emph{Wiener index}
$W(G)$ of a connected graph $G$ is defined to be the sum of all
distances in $G$, i. e.,
$$W(G)=\frac{1}{2}\sum_{u,v\in V(G)}\,d(u,v).$$
Let $X$ and $Y$ be two subsets of vertices of a graph $G$. We denote by $E_{G}[X,Y]$ the set of edges of $G$ with one vertex in $X$ and the other in $Y$. Let $G[X]$ be the subgraph of $G$ induced by $X$. We denote by $G-e$ be the graph obtained from $G$ by deleting an edge $e\in E(G)$. In particular, we denote by $K_{n}-kK_{2}$ the graph obtained form a complete graph by deleting $k$ independent edges, where $1\leq k\leq \lfloor\frac{n}{2}\rfloor$.  The \emph{union} of graphs $G_{1}$ and $G_{2}$ is the graph $G_{1}\cup G_{2}$ with vertex set $V(G_{1})\cup V(G_{2})$ and edge set $E(G_{1})\cup E(G_{2})$. The \emph{join} of $G_{1}$ and $G_{2}$, denoted $G_{1}\vee G_{2}$, is the graph obtained from $G_{1}\cup G_{2}$ by adding edges joining every vertex of $G_{1}$ to every vertex of $G_{2}$. As usual, denoted by $P_{n}$, $C_{n}$, $K_{n}$ and $S_n$ the path, cycle, complete graph and star of order $n$. Let $S^+_n$ be the $n$-vertex unicyclic graph obtained by adding an edge to $S_n$.  The clique number $\omega(G)$ is the number of vertices of the largest clique of $G$. Let $K_{\omega}^{n-\omega}$ be the graph obtained from a clique $K_{\omega}$ and a path $P_{n-\omega}$ by adding an edge between an endvertex of the path and a vertex from the clique. The Tur\'{a}n graph $T_{n,\omega}$ is a complete $\omega$-partite graph on $n$ vertices whose parts have equal or almost equal sizes (that is, $\lceil \frac{n}{\omega}\rceil$ or $\lfloor \frac{n}{\omega}\rfloor$).

Let $G$ be a graph with vertex set $V(G)=\{v_{1},v_{2},\ldots,v_{n}\}$. The
distance matrix of $G$, denoted by $D(G)$, is the symmetric real
matrix with $(i,j)$-entry being $d(v_{i},v_{j})$. Up to now, the distance matrix has been most extensively studied. We refer the reader to the survey \cite{AM0} for more details about distance eigenvalues of graphs and their applications. For $v_i\in V(G)$, the
\emph{transmission} of $v_i$, denoted by $Tr(v_i)$ ($D_i$ or $D_{v_{i}}$), is the sum of distances from $v_i$ to all other vertices of $G$, that is,
$$Tr(v_i)=\sum_{v_j\in V(G)}d(v_i,v_j).$$
It is clear that $2W(G)=\sum\limits^n_{i=1}\, Tr(v_{i})$ and $Tr(v_i)$
is the sum of $i$-th row of $D(G)$. Let
$Tr(G)=\mathrm{diag}(Tr(v_1),Tr(v_2),\ldots,Tr(v_n))$ be the
diagonal matrix of vertex transmissions of $G$. Aouchiche and Hansen
\cite{AM} introduced the distance Laplacian matrix of a connected
graph $G$ as
$$\mathcal{L}(G)=Tr(G)-D(G).$$
Obviously, $\mathcal{L}(G)$ is a positive semi-definite symmetric matrix, and we denote its eigenvalues by
$\partial^L_1(G)\geq \partial^L_{2}(G)\geq \cdots\geq \partial^L_{n}(G)=0$. The distance signless Laplacian matrix of $G$ is defined as
$$\mathcal{Q}(G)=Tr(G)+D(G).$$
 Evidently, $\mathcal{Q}(G)$ is irreducible, non-negative, symmetric and positive semidefinite. Let $\partial^Q_1(G)\geq \partial^Q_2(G)\geq\cdots\geq \partial^Q_n(G)$
 denote the eigenvalues of $\mathcal{Q}(G)$. As usual, we call $\partial^L_1(G)$ and $\partial^Q_1(G)$ the distance Laplacian spectral radius and distance
 signless Laplacian spectral radius, respectively. Some more details about distance (signless) Laplacian spectral eigenvalues, see \cite{AM,AM1,AM3,AH}.

Recently, the distance Laplacian and distance signless Laplacian matrix of a graph has received increasing attention.  In \cite{AM1},  Aouchiche and Hansen first establish some properties of the distance Laplacian spectral eigenvalues. They also proposed some conjectures about some particular distance Laplacian
eigenvalues of a graph: (a) the multiplicity of the largest eigenvalue $\partial^L_1(G) \leq n-2$ if $G\ncong K_n$; (b) for any graph, the path is the unique graph with the maximum distance Laplacian spectral radius; (c) for unicyclic graph, $\partial^L_1(G)\geq \partial^L_1(S_{n}^{+})$ with equality if and only if $G\cong S_{n}^{+}$; (d) for a tree, $\partial^L_2(G)\geq 2n-1$ with equality if and only if  $G\cong S_{n}$; (e) for any graph, $\partial^L_2(G)\geq n$ with equality if and only if $G\cong K_{n}$ or $G\cong K_{n}-e$. Lin et al. \cite{LH} proved the conjecture (a), it is also resolved in \cite{SJ1} by other method. In \cite{SJ}, Marques da Silva and Nikiforov confirmed conjecture (b). (c), (d) and (e) were proved by Tian et al. \cite{TF}. Nath and Paul \cite{NM} considered the graphs with complement graph a tree or a unicyclic of order $n$ and characterized the graphs among them having the second smallest distance Laplacian eigenvalue $n+1$. Lin and Zhou \cite{LZ} determined the graph with minimum distance Laplacian spectral radius among connected graphs with fixed number of pendent vertices, the tree with minimum distance Laplacian spectral radius among trees with fixed bipartition, the graph with minimum distance Laplaicn spectral radius among graphs with fixed edge connectivity at most half of the number of vertices. Niu et al. \cite{NA} considered the graph with minimum distance Laplacian spectral radius among all connected bipartite graphs with a given matching number and a given vertex connectivity. Furthermore, Tian et al. \cite{TF1} considered a more general case, they determined the graph with minimum distance Laplacian spectral radius among general graphs with given matching number.

Aouchiche and Hansen \cite{AM1} described some elementary properties on distance signless Laplacian eigenvalues. Meanwhile, some conjectures about distance signless Laplacian eigenvalues are also proposed, for example: (f) for a tree, $\partial^Q_2(G)\geq \partial^Q_2(S_{n})$ with equality if and only if $G\cong S_{n}$; (g) for a unicyclic graph,  $\partial^Q_2(G)\geq \partial^Q_2(S_{n}^{+})$ with equality if and only if $G\cong S_{n}^{+}$. These two conjectures were proved by Das \cite{DK}. In \cite{TF}, Tian et al. also gave a proof of the conjecture (f). In \cite{XZ,XZL}, Xing et al. considerd the distance signless Laplacian spectral radius of some special graphs. Lin and Lu \cite{LH1} determined the extremal graphs with maximum and minimum distance signless Laplacian spectral radius among all connected graphs with a given clique number. In \cite{LZ1}, Lin and Zhou studied the effect of three types of graft transformations to decrease or increase the distance signless Laplacian spectral radius, by using these graft transformations, they determined some extremal graphs with the smallest distance signless Laplacian spectral radius among some special graph classes. In \cite{LH2}, Lin and Das determined all graphs with $\partial^Q_n(G)=n-2$ and all graphs with $\partial^Q_2(G)\in [n-2,n]$, they also gave a lower bound on $\partial^Q_2(G)$ with independence number and the extremal graph is also characterized.

In this paper, we study the distance Laplacian eigenvalues and distance signless Laplacian eigenvalues of connected graphs. In section 3, we give an improved lower bound for the distance Laplacian spectral radius. As a consequence, we characterize all connected graphs with distance Laplacian spectral radius not exceeding $n+2$. In section 4, we present two upper bounds for the distance Laplacian spectral radius. In section 5, we determine the graphs with maximum and minimum distance Laplacian spectral radii among all connected graphs with a given clique number. In section 6, first, we obtain a lower bound for distance signless Laplacian spectral radius. Then, an upper bound for the difference between the distance Laplacian spectral radius and the distance signless Laplacian spectral radius of a graph is presented. Finally, we consider the upper bounds for the smallest distance signless Laplacian eigenvalue. In the last section, we determined the graph with maximum distance signless Laplacian spectral radius among all connected unicyclic graphs, that prove a conjecture proposed by Aouchiche and Hansen \cite{AM3}.

\section{Preliminaries}
\hspace*{\parindent}In this section, we shall list some previously known results which are needed in the following sections. The following result is the well-known Cauchy interlacing theorem (see, for example, \cite{SC}).

\begin{lemma} {\bf(Cauchy interlacing theorem)}\label{Lem=r1}
Let $A$ be a Hermitian matrix with order $n$ and let $B$ be a principal submatrix of $A$ with order $m$. If $\lambda_1(A)\geq \lambda_2(A)\geq \cdots\geq \lambda_n(A)$ list the eigenvalues of $A$ and $\lambda_1(B)\geq \lambda_2(B)\geq \cdots\geq \lambda_m(B)$ list the eigenvalues of $B$, then
$$\lambda_{n-m+i}(A)\leq \lambda_{i}(B)\leq \lambda_{i}(A)$$
for $i =1,\ldots, m$.
\end{lemma}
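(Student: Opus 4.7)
The plan is to deduce the interlacing inequalities from the Courant--Fischer min-max characterization of eigenvalues of a Hermitian matrix $M$ of order $k$:
$$\lambda_j(M)=\max_{\substack{S\subseteq\mathbb{C}^k\\ \dim S=j}}\min_{0\ne x\in S}\frac{x^*Mx}{x^*x}=\min_{\substack{S\subseteq\mathbb{C}^k\\ \dim S=k-j+1}}\max_{0\ne x\in S}\frac{x^*Mx}{x^*x}.$$
First I would reduce to the canonical case where $B$ occupies the upper-left $m\times m$ block of $A$: conjugating $A$ by a suitable permutation matrix does not alter its spectrum and carries any principal submatrix into this form. Then identify $\mathbb{C}^m$ with the coordinate subspace $W\subseteq\mathbb{C}^n$ of vectors supported on the first $m$ coordinates; under this identification, $x^*Ax=x^*Bx$ for every $x\in W$.

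For the upper bound $\lambda_i(B)\leq\lambda_i(A)$, I would apply the max-min form. Every $i$-dimensional subspace $S\subseteq\mathbb{C}^m$ embeds as an $i$-dimensional subspace $\widetilde{S}\subseteq W\subseteq\mathbb{C}^n$ on which the Rayleigh quotients of $A$ and $B$ coincide. Hence the maximum of $\min_{x\in S}x^*Bx/x^*x$ taken over all $i$-dimensional subspaces of $\mathbb{C}^m$ is bounded above by the same maximum taken over the strictly larger family of $i$-dimensional subspaces of $\mathbb{C}^n$, which is exactly $\lambda_i(A)$. For the lower bound $\lambda_{n-m+i}(A)\leq\lambda_i(B)$, I would invoke the dual min-max form: both $\lambda_i(B)$ and $\lambda_{n-m+i}(A)$ equal the minimum over $(m-i+1)$-dimensional subspaces of the respective ambient space, since $n-(n-m+i)+1=m-i+1$. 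Because the collection of such subspaces in $\mathbb{C}^n$ properly contains the image of the corresponding collection in $\mathbb{C}^m$ under the embedding $W\hookrightarrow\mathbb{C}^n$, the infimum can only become smaller, giving the stated inequality.

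The only delicate step is the bookkeeping of dimensions in the two dual forms of Courant--Fischer, which is precisely what produces the index shift $i\mapsto n-m+i$ on the lower-bound side. Beyond this indexing check, no substantive obstacle arises: the entire argument reduces to transporting the variational characterization of eigenvalues through the inclusion $W\hookrightarrow\mathbb{C}^n$ and exploiting the fact that restricting the admissible family of test subspaces to a smaller one can only increase a min-max (respectively decrease a max-min) quantity.
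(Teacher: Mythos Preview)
Your argument is correct: the Courant--Fischer min--max characterization together with the natural embedding $\mathbb{C}^m\hookrightarrow\mathbb{C}^n$ is the standard route, and your index bookkeeping $n-(n-m+i)+1=m-i+1$ is exactly what yields the shifted lower bound. The paper itself does not supply a proof of this lemma; it merely quotes the result from the textbook reference~\cite{SC}, so there is no in-paper argument to compare against.
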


Consider an $n\times n$ matrix
\[
M=\left(\begin{array}{ccccccc}
M_{1,1}&M_{1,2}&\cdots &M_{1,m}\\
M_{2,1}&M_{2,2}&\cdots &M_{2,m}\\
\vdots& \vdots& \ddots& \vdots\\
M_{m,1}&M_{m,2}&\cdots &M_{m,m}\\
\end{array}\right),
\]
whose rows and columns are partitioned according to a partitioning $Y_{1}, Y_{2},\ldots ,Y_{m}$ of $\{1,2,\ldots, n\}$. A \emph{quotient matrix} $R$ is the $m\times m$ matrix whose entries are the
average row sums of the blocks $M_{i,j}$ of $M$.

\begin{lemma}{\bf(\cite{HW})}\label{le10}
Let $M$ be a symmetric $n\times n$ matrix with eigenvalues $\lambda_{1}(M)\geq \cdots \geq \lambda_{n}(M)$ and $R$ be an $m\times m$ matrix with eigenvalues $\lambda_{1}(R)\geq \cdots \geq \lambda_{m}(R)$. Suppose that $R$ is a quotient matrix of partitioned matrix $M$. Then $\lambda_{1}(M)\geq \lambda_{1}(R)$.
\end{lemma}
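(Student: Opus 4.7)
The plan is to reduce the statement to the Rayleigh quotient characterization of the largest eigenvalue of a symmetric matrix. The quotient matrix $R$ is not symmetric in general, but I expect to exhibit $R$ as a matrix similar to a symmetric ``compression'' of $M$ onto an $m$-dimensional subspace; the conclusion will then follow from the variational principle.

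First I would encode the partition via its $n\times m$ characteristic matrix $\widetilde{S}$, defined by $\widetilde{S}_{ij}=1$ if $i\in Y_j$ and $0$ otherwise. Writing $\Delta=\widetilde{S}^{T}\widetilde{S}=\mathrm{diag}(|Y_1|,\ldots,|Y_m|)$, an entrywise check shows that the ``average row sum of block'' description of $R$ is precisely $R=\Delta^{-1}\widetilde{S}^{T}M\widetilde{S}$. Normalizing columns by setting $S=\widetilde{S}\,\Delta^{-1/2}$ produces an $n\times m$ matrix with orthonormal columns ($S^{T}S=I_m$), and a one-line computation then gives
$$S^{T}MS=\Delta^{-1/2}\widetilde{S}^{T}M\widetilde{S}\,\Delta^{-1/2}=\Delta^{1/2}R\,\Delta^{-1/2}.$$
Thus $S^{T}MS$ is symmetric and similar to $R$, so the two matrices share the same spectrum; in particular $\lambda_1(R)=\lambda_1(S^{T}MS)$, and as a byproduct the eigenvalues of $R$ are real (which is not visible from its definition).

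It remains to show $\lambda_1(M)\geq\lambda_1(S^{T}MS)$. Taking a unit eigenvector $v\in\mathbb{R}^m$ for $\lambda_1(S^{T}MS)$ and setting $x=Sv\in\mathbb{R}^n$, orthonormality of the columns of $S$ yields $\|x\|^2=v^{T}S^{T}Sv=1$ and $x^{T}Mx=v^{T}S^{T}MSv=\lambda_1(S^{T}MS)$, so the Rayleigh principle $\lambda_1(M)=\max_{\|x\|=1}x^{T}Mx$ immediately gives $\lambda_1(M)\geq\lambda_1(R)$. Alternatively one could extend the columns of $S$ to an orthonormal basis of $\mathbb{R}^n$ and invoke Cauchy interlacing (Lemma~\ref{Lem=r1}) directly, since in this basis $S^{T}MS$ is the leading principal submatrix of $M$.

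The only piece of the argument that requires any genuine care is the bookkeeping behind the similarity identity $S^{T}MS=\Delta^{1/2}R\Delta^{-1/2}$: one must unwind what ``average row sum of block $M_{i,j}$'' means entry by entry and check it agrees with $(\Delta^{-1}\widetilde{S}^{T}M\widetilde{S})_{ij}$. Once this is settled, everything else is a direct application of the Rayleigh principle, so I do not anticipate a substantive obstacle.
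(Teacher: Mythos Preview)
Your argument is correct and is exactly the standard proof of this fact (as found, e.g., in Haemers' interlacing paper cited as \cite{HW}). Note, however, that the paper itself does not supply a proof of Lemma~\ref{le10}: it is listed in the Preliminaries as a known result quoted from \cite{HW}, so there is no ``paper's own proof'' to compare against. Your write-up would serve perfectly well as a self-contained justification; the similarity $S^{T}MS=\Delta^{1/2}R\Delta^{-1/2}$ together with the Rayleigh principle (or, equivalently, Cauchy interlacing applied to the principal submatrix $S^{T}MS$) is precisely how this inequality is established in the literature.
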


\begin{lemma} {\bf(\cite{AM})} \label{zq1}
Let $G$ be a connected graph of order $n$ and $m~(\geq n)$ edges. Consider the connected graph $\tilde{G}$ obtained from $G$ by the
deletion of an edge. Let $\partial^L_1(G)\geq \partial^L_2(G)\geq \cdots\geq \partial^L_n(G)$ and
$\partial^L_1(\tilde{G})\geq \partial^L_2(\tilde{G})\geq \cdots\geq \partial^L_n(\tilde{G})$ denote the distance Laplacian eigenvalues of $G$ and
$\tilde{G}$, respectively. Then $\partial^L_i(\tilde{G})\geq \partial^L_i(G)$, for all $i=1,\,2,\ldots,\,n$.
\end{lemma}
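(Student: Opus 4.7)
The plan is to promote the inequality from an eigenvalue inequality to a matrix inequality: I would show that $\mathcal{L}(\tilde{G}) - \mathcal{L}(G)$ is positive semidefinite in the Loewner order, after which the conclusion falls out of Weyl's monotonicity principle (if $A - B \succeq 0$ for Hermitian $A,B$, then $\lambda_i(A) \geq \lambda_i(B)$ for every $i$). The role of the hypothesis $m \geq n$ is only to guarantee that $G$ contains a non-bridge, so an edge can be removed while keeping the graph connected.

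First I would note that removing an edge from a connected graph can only (weakly) increase each pairwise distance, so the matrix $W := D(\tilde{G}) - D(G)$ is symmetric, has zero diagonal, and has non-negative entries. Summing row $i$ gives
\[
\sum_j W_{ij} = Tr_{\tilde{G}}(v_i) - Tr_G(v_i),
\]
so the diagonal matrix $R$ of row sums of $W$ is exactly $Tr(\tilde{G}) - Tr(G)$. Consequently,
\[
\mathcal{L}(\tilde{G}) - \mathcal{L}(G) = \bigl(Tr(\tilde{G}) - Tr(G)\bigr) - \bigl(D(\tilde{G}) - D(G)\bigr) = R - W,
\]
which is precisely the (combinatorial) Laplacian of the edge-weighted complete graph on $V(G)$ with non-negative weights $W_{ij}$.

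Second, I would verify positive semidefiniteness via the standard quadratic-form identity: for any real vector $x$,
\[
x^{T}(R-W)x \;=\; \tfrac{1}{2}\sum_{i,j} W_{ij}(x_i - x_j)^{2} \;\geq\; 0,
\]
since every $W_{ij}\geq 0$. Hence $\mathcal{L}(\tilde{G}) \succeq \mathcal{L}(G)$, and Weyl's monotonicity theorem immediately gives $\partial^{L}_{i}(\tilde{G}) \geq \partial^{L}_{i}(G)$ for $i=1,\dots,n$.

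The only subtlety, and the step I would flag as the conceptual heart of the argument, is recognizing that the transmission update and the distance update are tied together in exactly the right way to give $R - W$ a weighted-Laplacian structure; once this structural observation is in hand, the PSD bound and Weyl's inequality make the rest routine. No custom machinery beyond Weyl's monotonicity is required, and in particular the statement does not depend on any interlacing or quotient-matrix tool from the preliminaries.
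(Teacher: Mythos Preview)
Your argument is correct. The key observation---that $\mathcal{L}(\tilde{G}) - \mathcal{L}(G) = R - W$ is precisely a weighted Laplacian with non-negative weights $W_{ij} = d_{\tilde{G}}(v_i,v_j) - d_G(v_i,v_j)$, hence positive semidefinite---is clean, and Weyl's monotonicity then delivers the termwise eigenvalue inequality immediately.

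As for comparison: the present paper does not supply its own proof of this lemma; it is quoted from Aouchiche and Hansen \cite{AM} as a preliminary result. The argument given there is essentially the one you have reconstructed: they too exhibit $\mathcal{L}(\tilde{G}) - \mathcal{L}(G)$ as the Laplacian of a non-negatively weighted graph and conclude via positive semidefiniteness. So your proposal is not merely correct but coincides with the original source argument; there is no alternative route in the paper to contrast it against.
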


A similar result about the distance singless Laplacian eigenvalues is also given.

\begin{lemma} {\bf(\cite{AM})} \label{zq2}
Let $G$ be a connected graph of order $n$ and $m~(\geq n)$ edges. Consider the connected graph $\tilde{G}$ obtained from $G$ by the
deletion of an edge. Let $\partial^Q_1(G)\geq \partial^Q_2(G)\geq \cdots\geq \partial^Q_n(G)$ and
$\partial^Q_1(\tilde{G})\geq \partial^Q_2(\tilde{G})\geq \cdots\geq \partial^Q_n(\tilde{G})$ denote the distance Laplacian eigenvalues of $G$ and
$\tilde{G}$, respectively. Then $\partial^Q_i(\tilde{G})\geq \partial^Q_i(G)$, for all $i=1,\,2,\ldots,\,n$.
\end{lemma}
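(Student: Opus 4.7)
The plan is to prove this by showing that $\mathcal{Q}(\tilde{G}) - \mathcal{Q}(G)$ is positive semidefinite and then invoking Weyl's inequality to conclude that every eigenvalue weakly increases. Since the edge deletion keeps the graph connected, removing an edge cannot shorten any shortest path, so $d_{\tilde{G}}(u,v) \geq d_G(u,v)$ for every pair $u,v \in V(G)$. Define $\delta_{uv} := d_{\tilde{G}}(u,v) - d_G(u,v) \geq 0$.

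The key step is a rank-one decomposition of the difference. First I would note that the off-diagonal entries of $\mathcal{Q}(\tilde{G}) - \mathcal{Q}(G)$ at positions $(u,v)$ and $(v,u)$ are both exactly $\delta_{uv}$, while the $(u,u)$ diagonal entry is $\sum_{v \neq u} \delta_{uv}$ (because the transmission difference $\tilde{T}(u) - T(u)$ telescopes as a sum of $\delta_{uv}$). A direct check then shows
\begin{equation*}
\mathcal{Q}(\tilde{G}) - \mathcal{Q}(G) \;=\; \sum_{\{u,v\} \subseteq V(G)} \delta_{uv}\,(e_u + e_v)(e_u + e_v)^{T},
\end{equation*}
where $e_u$ denotes the standard basis vector at $u$. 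Each term is a non-negative scalar multiple of a rank-one positive semidefinite matrix, so the whole sum is positive semidefinite.

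Writing $\mathcal{Q}(\tilde{G}) = \mathcal{Q}(G) + N$ with $N \succeq 0$, Weyl's monotonicity theorem for Hermitian matrices then yields $\lambda_i(\mathcal{Q}(\tilde{G})) \geq \lambda_i(\mathcal{Q}(G))$ for every $i$, which is precisely the claimed inequality $\partial^Q_i(\tilde{G}) \geq \partial^Q_i(G)$. The same scheme recovers Lemma \ref{zq1} (the distance Laplacian version) with the rank-one summands $(e_u - e_v)(e_u - e_v)^{T}$ instead.

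The main obstacle is the very first temptation: arguing only entrywise. The inequality $\mathcal{Q}(\tilde{G}) \geq \mathcal{Q}(G)$ entrywise on non-negative matrices gives only a Perron-Frobenius comparison of the spectral radii $\partial^Q_1$, not of all eigenvalues. One must instead work in the Loewner (PSD) order, and the non-trivial observation is that the diagonal boost coming from transmissions exactly compensates, via the identity above, to make the difference PSD rather than merely entrywise non-negative. Once that decomposition is in hand, the rest is immediate from Weyl's inequality.
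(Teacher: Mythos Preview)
Your argument is correct. The rank-one decomposition
\[
\mathcal{Q}(\tilde{G})-\mathcal{Q}(G)=\sum_{\{u,v\}}\delta_{uv}\,(e_u+e_v)(e_u+e_v)^{T}
\]
is easily verified entry by entry exactly as you describe, and since each summand is positive semidefinite, Weyl's inequality immediately gives $\partial^Q_i(\tilde{G})\geq \partial^Q_i(G)$ for every $i$.

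Note, however, that the present paper does \emph{not} supply its own proof of this lemma: it is quoted verbatim from Aouchiche and Hansen \cite{AM} and used as a black box. There is therefore nothing in the paper to compare your argument against. For what it is worth, the proof in \cite{AM} proceeds via the quadratic-form identity
\[
x^{T}\mathcal{Q}(G)x=\sum_{i<j} d_G(v_i,v_j)\,(x_i+x_j)^{2},
\]
observes that every coefficient $d_G(v_i,v_j)$ weakly increases under edge deletion, and then applies the Courant--Fischer min--max characterisation. This is the same idea as yours, merely phrased in terms of quadratic forms rather than an explicit matrix decomposition; your version has the minor advantage of making the PSD difference completely explicit. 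Your closing remark about the entrywise comparison being insufficient for $i>1$ is well taken.
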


\section{Lower bound on distance Laplacian spectral radius of graphs}

\hspace*{\parindent}In this section, we present some lower bounds on the distance Laplacian spectral radius. Let $G$ be a connected graph with transmissions $D_{1}\geq D_{2}\geq \cdots\geq D_{n}$.  Recently, Lin et al. \cite{LH} gave the following result.
\begin{lemma} {\bf(\cite{LH})} \label{k1}
Let $G$ be a connected graph of order $n$ and $D_1$ be the maximum transmission of $G$. Then
$$\partial^L_1(G)\geq D_1+\frac{D_1}{n-1}\,.$$
\end{lemma}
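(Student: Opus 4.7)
The plan is to apply the quotient-matrix inequality (Lemma~\ref{le10}) to a two-part vertex partition that isolates a vertex of maximum transmission. Let $v_1$ be a vertex with $Tr(v_1)=D_1$ and partition $V(G)$ into $Y_1=\{v_1\}$ and $Y_2=V(G)\setminus\{v_1\}$. Let $R$ denote the associated $2\times 2$ quotient matrix of $\mathcal{L}(G)$. By Lemma~\ref{le10}, $\partial^L_1(G)\ge \lambda_1(R)$, so the target bound reduces to showing $\lambda_1(R)=D_1+\frac{D_1}{n-1}$.

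Next I would compute the four entries of $R$, exploiting the fact that every row (hence every column) of $\mathcal{L}(G)=Tr(G)-D(G)$ sums to zero. The $(1,1)$ block is the scalar $D_1$, so $R_{11}=D_1$. Applying the row-sum-zero identity to row $v_1$ shows that the off-diagonal part of that row totals $-D_1$, giving $R_{12}=-D_1$. The analogous identity for column $v_1$, together with the division by $|Y_2|=n-1$, yields $R_{21}=-\frac{D_1}{n-1}$. For $R_{22}$, observe that for any $i\in Y_2$ the row-sum of $\mathcal{L}(G)$ restricted to $Y_2$ equals $0-\mathcal{L}(G)_{i,v_1}=d(v_1,i)$; summing over $i\in Y_2$ gives $D_1$, so $R_{22}=\frac{D_1}{n-1}$.

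Finally I would read off the spectrum of
$$R=\begin{pmatrix} D_1 & -D_1 \\ -\tfrac{D_1}{n-1} & \tfrac{D_1}{n-1} \end{pmatrix}.$$
A direct check gives $\operatorname{tr} R=\frac{nD_1}{n-1}$ and $\det R=0$, so the eigenvalues of $R$ are $0$ and $\frac{nD_1}{n-1}$. Consequently $\lambda_1(R)=\frac{nD_1}{n-1}=D_1+\frac{D_1}{n-1}$, and Lemma~\ref{le10} yields the claimed inequality.

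I do not anticipate any real obstacle; the only mildly delicate step is the computation of $R_{22}$, but the identity $\mathcal{L}(G)\mathbf{1}=0$ supplies it essentially for free. An alternative route would be to use the Rayleigh quotient at the test vector $x$ with $x_{v_1}=n-1$ and $x_v=-1$ for $v\ne v_1$ (this vector is orthogonal to $\mathbf{1}$ and gives the same numerical bound), but the quotient-matrix version above is cleaner and makes the origin of the factor $\frac{n}{n-1}$ transparent.
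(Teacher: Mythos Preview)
Your argument is correct. The paper does not supply a proof of this lemma; it is quoted from \cite{LH} and used as a stepping stone toward the sharper Theorem~\ref{th9}. Your two-block quotient-matrix computation is valid: the entries of $R$ are exactly as you state (the identity $\mathcal{L}(G)\mathbf{1}=0$ indeed yields $R_{22}=\frac{D_1}{n-1}$ immediately), and $\det R=0$ with $\operatorname{tr}R=\frac{nD_1}{n-1}$ gives $\lambda_1(R)=D_1+\frac{D_1}{n-1}$, so Lemma~\ref{le10} finishes the job. This is in the same spirit as the paper's own use of Lemma~\ref{le10} in the proof of Theorem~\ref{th9}, where a finer three-block partition is taken to extract the stronger bound $D_1+2$; your two-block version is the natural precursor. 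The Rayleigh-quotient alternative you mention (test vector $(n-1,-1,\ldots,-1)^t$) is equally valid and is in fact the characteristic vector of the partition $\{v_1\}\cup (V\setminus\{v_1\})$, so the two routes are essentially the same computation viewed from two angles.
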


As a consequece, by $D_{1}\geq n-1$, they got a lower bound on the distance Laplacian spectral radius of graphs.

\begin{lemma} {\bf(\cite{LH})} \label{k2}
Let $G$ be a connected graph of order $n$. Then
$$\partial^L_1(G)\geq D_1+1\,,$$
equality holds if and only if $G\cong K_n$.
\end{lemma}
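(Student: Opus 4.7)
The plan is to chain Lemma \ref{k1} with the universal lower bound on transmissions in a connected graph. First I would note that for any connected graph on $n$ vertices, every vertex is at distance at least $1$ from each of the other $n-1$ vertices, so $D_i \geq n-1$ for every $i$; in particular $D_1 \geq n-1$, which gives $\frac{D_1}{n-1} \geq 1$. Combining this with Lemma \ref{k1} immediately produces
$$\partial^L_1(G) \geq D_1 + \frac{D_1}{n-1} \geq D_1 + 1,$$
which is the desired inequality.

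For the equality characterization, I would analyze when both of the above inequalities collapse to equalities simultaneously. The outer step $\frac{D_1}{n-1} \geq 1$ is tight only when $D_1 = n-1$. Since $D_1$ is the maximum transmission while every transmission is bounded below by $n-1$, having $D_1 = n-1$ forces $Tr(v_i) = n-1$ for all $i$. This in turn forces $d(v_i, v_j) = 1$ for every pair, i.e.\ $G \cong K_n$. Conversely, for $K_n$ I would verify the equality directly: since $D(K_n) = J - I$ and $Tr(K_n) = (n-1)I$, one has $\mathcal{L}(K_n) = nI - J$, whose eigenvalues are $n$ (with multiplicity $n-1$) and $0$, so $\partial^L_1(K_n) = n = (n-1) + 1 = D_1 + 1$.

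There is no real obstacle here; the only small subtlety is that Lemma \ref{k1} is stated only as an inequality (with no equality case given), but this does not affect the argument because the inequality $D_1 \geq n-1$ alone is enough to pin down $G \cong K_n$ in the equality case, after which the direct computation on $K_n$ confirms that Lemma \ref{k1} itself happens to be tight there as well.
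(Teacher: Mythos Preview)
Your proposal is correct and follows exactly the approach the paper indicates: the text preceding Lemma~\ref{k2} explicitly says ``As a consequence, by $D_1\geq n-1$, they got a lower bound on the distance Laplacian spectral radius of graphs,'' which is precisely the chaining of Lemma~\ref{k1} with $D_1\geq n-1$ that you carry out. Your handling of the equality case (forcing $D_1=n-1$, hence all distances equal to $1$, and then verifying directly on $K_n$) is also the natural one.
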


Here, by using the quotient matrix technique, we present a lower bound which improves the Lemma \ref{k2} when $G\ncong K_n$.

\begin{theorem}\label{th9}
Let $G~(\ncong K_n)$ be a connected graph with maximum transmission $D_{1}$. Then $$\partial^L_1(G)\geq D_{1}+2.$$ Moreover, if $\mathrm{diam}(G)\geq 3$, then $\partial^L_1(G)>D_{1}+2$.
\end{theorem}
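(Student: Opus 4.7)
The plan is to apply the quotient matrix technique (Lemma \ref{le10}) to a partition tailored to a maximum-transmission vertex $v$ of $G$. Since $G \not\cong K_{n}$, any non-dominating vertex $w$ satisfies $D_{w} \geq (n-2) + 2 = n$, so $D_{1} \geq n$ and therefore $v$ itself cannot be dominating. Write $T = N_{G}(v)$, $S = V(G) \setminus N_{G}[v]$, $d_{v} = |T|$, and $s = |S|$; then $s \geq 1$, and $T \neq \emptyset$ because $G$ is connected. I would partition $V(G) = \{v\} \cup S \cup T$ and consider the corresponding $3\times 3$ quotient matrix $R$ of $\mathcal{L}(G)$; by Lemma \ref{le10}, $\partial^{L}_{1}(G) \geq \lambda_{1}(R)$, so it suffices to bound $\lambda_{1}(R)$ from below by $D_{1} + 2$.

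The main step is to use a Rayleigh-quotient witness. I would test $\mathcal{L}(G)$ against the block-constant vector $x \in \mathbb{R}^{n}$ given by $x_{v} = s$, $x_{u} = -1$ for $u \in S$, and $x_{w} = 0$ for $w \in T$ (note $\mathbf{1}^{T}x = 0$). A direct expansion, using the identity $\sum_{u \in S} D_{u} = (D_{1} - d_{v}) + 2W(S) + T_{ST}$ with $W(S) = \sum_{\{u_{1},u_{2}\}\subseteq S} d(u_{1},u_{2})$ and $T_{ST} = \sum_{u \in S,\, w \in T} d(u,w)$, yields the compact expressions
\[
x^{T}\mathcal{L}(G)\, x = (s+1)^{2}D_{1} - (2s+1)\, d_{v} + T_{ST}, \qquad x^{T}x = s(s+1).
\]
Setting $\delta_{v} := D_{1} - d_{v} - 2s = \sum_{u \in S}(d(v,u) - 2) \geq 0$, the Rayleigh quotient rearranges to $D_{1} + 2 + \frac{T_{ST} + (s+1)\delta_{v} - s\, d_{v}}{s(s+1)}$, and the estimate $\partial^{L}_{1}(G) \geq D_{1} + 2$ reduces to the clean inequality
\[
T_{ST} + (s+1)\delta_{v} \geq s\, d_{v},
\]
which holds at once from $T_{ST} \geq s\, d_{v}$ (each distance is at least $1$) and $\delta_{v} \geq 0$.

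For the strict version when $\mathrm{diam}(G) \geq 3$, I would argue by contrapositive: equality in the above simplified inequality forces $T_{ST} = s\, d_{v}$ \emph{and} $\delta_{v} = 0$ simultaneously. The first condition forces $G[S,T]$ to be complete bipartite, and the second forces every non-neighbour of $v$ to lie at distance exactly $2$ from $v$. Under these two conditions one checks easily that every pair in $T$ has distance $\leq 2$ via $v$, every pair in $S$ has distance $\leq 2$ through any fixed $w \in T$ (which exists), and every remaining pair lies at distance $\leq 2$ directly; hence $\mathrm{diam}(G) \leq 2$, contradicting the hypothesis. This gives $\partial^{L}_{1}(G) > D_{1} + 2$ whenever $\mathrm{diam}(G) \geq 3$.

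The main obstacle is the algebraic simplification leading to the compact inequality $T_{ST} + (s+1)\delta_{v} \geq s\, d_{v}$; once this clean reduction is in place, both conclusions follow from elementary counting and the short structural observation that "$G[S,T]$ complete bipartite together with $\delta_{v} = 0$" forces the diameter to be at most $2$.
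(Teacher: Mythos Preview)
Your argument is correct. It uses the same underlying partition $\{v\}\cup T\cup S$ around a maximum-transmission vertex as the paper, but the execution is genuinely different and somewhat more economical.

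The paper first passes to a supergraph $G'$ (adding edges so that $G[T]$ is a clique and every $T$--$B$ pair is adjacent), invokes the monotonicity $\partial^L_1(G)\geq\partial^L_1(G')$, and then evaluates the determinant of the $3\times3$ quotient matrix of $\mathcal{L}(G')$ at $D_1+2$. For the strict part it splits into two subcases, the second of which requires a \emph{different} quotient matrix $R^{*}$ built directly on $G$.

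You bypass both the graph modification and the case split: a single block-constant test vector on $G$ yields the Rayleigh quotient in the closed form $D_1+2+\dfrac{T_{ST}+(s+1)\delta_v - s\,d_v}{s(s+1)}$, and the elementary bounds $T_{ST}\geq s\,d_v$, $\delta_v\geq 0$ give the inequality immediately. The equality analysis is then uniform: $T_{ST}=s\,d_v$ forces every $S$--$T$ pair to be adjacent, $\delta_v=0$ forces every $S$-vertex to be at distance exactly $2$ from $v$, and together these give $\mathrm{diam}(G)\leq 2$ in one line.

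Two minor remarks. First, you announce that you will bound $\lambda_1(R)$ via Lemma~\ref{le10}, but in fact you never use $R$: the Rayleigh inequality $\partial^L_1(G)\geq x^{T}\mathcal{L}(G)x/ x^{T}x$ for any $x\neq 0$ is all you need, so the reference to the quotient matrix can be dropped. Second, the identity involving $W(S)$ that you display is correct but not actually used in the computation of $x^{T}\mathcal{L}(G)x$ (the within-$S$ and within-$T$ pairs contribute $0$), so it could be omitted for brevity.
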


\begin{proof}
Let $G$ be a connected graph with vertex set $\{v_{1},v_{2},\ldots, v_{n}\}$ and $\mathrm{diam}(G)\geq 2$. We may assume that $D_{v_{1}}=D_{1}$. Let $A=N_{G}(v_{1})$ and $B=\{v\in V(G) : d(v,v_{1})=2\}$.  Let $G'$ be the graph obtained from $G$ by adding  edges (if necessary) such that $G[A]$ is a clique and each vertex in $A$ is adjacent to each vertex in $B$. It is clear that $\partial^L_1(G)\geq \partial^L_1(G')$. Let $D'_{v_{1}}$ be the transmission of $v_{1}$ in $G'$. Note that $D'_{v_{1}}=D_{1}$. 
It suffices to show that $\partial^L_1(G')\geq D'_{v_{1}}+2$. Then $\{v_{1}\}, A, V(G')\backslash(\{v_{1}\}\cup A)$ is a partition of $V(G')$. Then we obtain a quotient matrix of $\mathcal{L}(G')$:
\begin{equation*}\begin{split}
R&=\left(\begin{array}{ccccccc}
D'_{v_{1}}&-|A|&-D'_{v_{1}}+|A|\\[1mm]
-1&D'_{v_{1}}-n+2&-D'_{v_{1}}+n-1\\[2mm]
\frac{|A|-D'_{v_{1}}}{n-|A|-1}&\frac{-|A|(D'_{v_{1}}-n+1)}{n-|A|-1}&\frac{|A|(D'_{v_{1}}-n+1)-(|A|-D'_{v_{1}})}{n-|A|-1}
\end{array}\right).
\end{split}\end{equation*}
By a direct calculation, we have
$$|(D'_{v_{1}}+2)I-R|=\frac{-n(D'_{v_{1}}+2)(D'_{v_{1}}-2n+|A|+2)}{n-|A|-1}.$$
Since $D'_{v_{1}}\geq |A|+2(n-|A|-1)=2n-|A|-2$, it follows that $|(D'_{v_{1}}+2)I-R|\leq 0$. Therefore, $\lambda_{1}(R)\geq D'_{v_{1}}+2$. From Lemma \ref{le10}, we have $\partial^L_1(G')\geq \lambda_{1}(R)\geq D'_{v_{1}}+2$, as desired.

So in the following, we may assume that $\mathrm{diam}(G)\geq 3$. If there exists a vertex $v\in V(G')$ such that $d_{G'}(v,v_{1})\geq 3$, then $D'_{v_{1}}>2n-|A|-2$, and so $\lambda_{1}(R)>D'_{v_{1}}+2$. This implies $\partial^L_1(G')>D'_{v_{1}}+2$. Otherwise, $V(G)=\{v_{1}\}\cup A\cup B$. Let $v$ be an arbitrary vertex in $A$. Then we have
$$\sum_{u\in V(G)\backslash A}d_{G}(u,v)=1+(n-|A|-1)+\varepsilon(v)=n-|A|+\varepsilon(v)$$
where $\varepsilon(v)$ is a non-negative integer corresponding to $v$. We may let $a=\sum\limits_{v\in A}\frac{\varepsilon(v)}{|A|}$. For the partitioning $\{v_{1}\}\cup A\cup B$ of $V(G)$, we get a quotient matrix of $\mathcal{L}(G)$:
\begin{equation*}\begin{split}
R^{*}&=\left(\begin{array}{ccccccc}
tr(v_{1})&-|A|&-tr(v_{1})+|A|\\[1mm]
-1&n-|A|+a&1-n+|A|-a\\[2mm]
\frac{|A|-tr(v_{1})}{n-|A|-1}&\frac{-|A|(n-|A|+a-1)}{n-|A|-1}&\frac{tr(v_{1})-|A|-|A|(1-n+|A|-a)}{n-|A|-1}
\end{array}\right).
\end{split}\end{equation*}
Since $D_{v_{1}}=2n-|A|-2$, by a simple calculation, we have
$$|(D_{v_{1}}+2)I-R^{*}|=\frac{-a|A|(2n-|A|)}{n-|A|-1}.$$
Clearly, $a\geq 0$. If $a>0$, then $|(D_{v_{1}}+2)I-R^{*}|<0$. Hence $\lambda_{1}(R^{*})>D_{v_{1}}+2$. It follows from Lemma \ref{le10} that $\partial^L_1(G)\geq \lambda_{1}(R^{*})>D_{v_{1}}+2$. If $a=0$, then $\varepsilon(v)=0$ for $v\in A$. This implies every vertex in $A$ is adjacent to every vertex in $V(G)\backslash A$, which contradicts $\mathrm{diam}(G)\geq 3$. This completes the proof.\hspace*{\fill}$\Box$
\end{proof}

Using Theorem \ref{th9}, we characterize all connected graphs with distance Laplacian spectral radius not exceeding $n+2$.
\begin{theorem}
Let $G$ be a connected graph of order $n$. Then we have the following statements.

\noindent{\rm{(1)}} $\partial_1^L(G)=n$ if and only if $G\cong K_n.$

\noindent{\rm{(2)}} If $G\ncong K_n$, then $\partial_1^L(G)\geq n+2$ with equality holding if and only if $G\cong K_n-kK_2$ where $1\leq k \leq \lfloor\frac{n}{2}\rfloor.$
\end{theorem}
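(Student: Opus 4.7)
The strategy is to combine Lemma \ref{k2} (for part (1)) with Theorem \ref{th9} (for part (2)), supplemented by the simple observation that $G\ncong K_n$ forces the maximum transmission $D_1\geq n$, and closed by a direct spectral computation of $\mathcal{L}(K_n-kK_2)$.

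For part (1), Lemma \ref{k2} already gives $\partial_1^L(G)\geq D_1+1\geq n$ with equality iff $G\cong K_n$, so nothing further is needed.

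For the inequality in part (2), I would first verify that $G\ncong K_n$ implies $D_1\geq n$: every transmission satisfies $D_i\geq n-1$, so $D_1=n-1$ would force $D_i=n-1$ for all $i$, i.e.\ every vertex is adjacent to every other, hence $G\cong K_n$. Theorem \ref{th9} then yields $\partial_1^L(G)\geq D_1+2\geq n+2$. For the equality case, suppose $\partial_1^L(G)=n+2$ with $G\ncong K_n$. The bound $\partial_1^L(G)\geq D_1+2$ together with $D_1\geq n$ forces $D_1=n$, and the strict inequality part of Theorem \ref{th9} forces $\mathrm{diam}(G)\leq 2$, hence $\mathrm{diam}(G)=2$. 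With the diameter pinned at $2$, every vertex $v$ has transmission $d(v)+2(n-1-d(v))=2n-2-d(v)\leq D_1=n$, so $d(v)\geq n-2$, i.e.\ each vertex has at most one non-neighbour. Consequently every non-edge uniquely pairs two vertices, the set of non-edges is a matching, and $G\cong K_n-kK_2$ for some $1\leq k\leq \lfloor n/2\rfloor$.

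To close the loop I would verify that each $K_n-kK_2$ with $k\geq 1$ achieves the bound by computing its distance Laplacian spectrum directly. Writing $\bar G=kK_2\cup (n-2k)K_1$, the identities $D(K_n-kK_2)=J-I+A(\bar G)$ and $Tr(K_n-kK_2)=(n-1)I+\mathrm{diag}(A(\bar G)\mathbf{1})$ give $\mathcal{L}(K_n-kK_2)=nI-J+L(\bar G)$, where $L(\bar G)$ is the ordinary Laplacian of $\bar G$. Since every eigenvector of $L(\bar G)$ associated with the eigenvalue $2$ (one per $K_2$ component) is automatically orthogonal to $\mathbf{1}$, and $\mathbf{1}$ is a common eigenvector of $J$ (with eigenvalue $n$) and $L(\bar G)$ (with eigenvalue $0$), the spectrum of $\mathcal{L}(K_n-kK_2)$ is $\{0,\,n^{(n-k-1)},\,(n+2)^{(k)}\}$; in particular $\partial_1^L(K_n-kK_2)=n+2$.

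\textbf{Main obstacle.} The main difficulty is the equality characterisation: using $\partial_1^L(G)=n+2$ with Theorem \ref{th9} to pin down both $D_1=n$ and $\mathrm{diam}(G)=2$, and then translating the resulting transmission bound into the matching structure of the non-edges. The converse is routine because $\mathcal{L}(K_n-kK_2)$ decomposes cleanly into a rank-one piece and the Laplacian of a disjoint union of edges and isolated vertices.
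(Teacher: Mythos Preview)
Your proposal is correct and follows essentially the same route as the paper. The only cosmetic difference is that the paper organises part~(2) as a case split on $\Delta(G^c)$ (if $\Delta(G^c)\geq 2$ then $D_1\geq n+1$, hence $\partial_1^L(G)\geq n+3$; if $\Delta(G^c)=1$ then $G\cong K_n-kK_2$), whereas you first pin $D_1=n$ and $\mathrm{diam}(G)=2$ and then read off the degree condition $d(v)\geq n-2$; these are logically equivalent. Two minor remarks: your appeal to the strict-inequality clause of Theorem~\ref{th9} is in fact unnecessary, since $D_1=n$ already forces $\mathrm{diam}(G)\leq 2$ (a diametral path of length $\geq 3$ would give some transmission $\geq n+2$); and your explicit spectral computation of $\mathcal{L}(K_n-kK_2)$ via $nI-J+L(\bar G)$ is more detailed than the paper's ``easy to check'', which is a plus.
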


\begin{proof}
According to Lemma \ref{k1}, the conclusion $(1)$ follows immediately. So in the following, we may assume that $G\ncong K_n$, then $\Delta(G^c)\geq 1$.
If $\Delta(G^c)\geq 2$, then $D_1\geq (n-3)+4=n+1$. By Theorem \ref{th9}, it follows that $\partial_1^L(G)\geq n+3.$
If $\Delta(G^c)=1$, then $G\cong K_n-kK_2$ where $1\leq k \leq \lfloor\frac{n}{2}\rfloor.$ Furthermore, it is easy to check that $\partial_1^L(K_n-kK_2)=n+2$. Thus conclusion (2) holds.\hspace*{\fill}$\Box$
\end{proof}

\section{Upper bound on distance Laplacian spectral radius of graphs}
\hspace*{\parindent}In this section, we present some upper bounds on the distance Laplacian spectral radius of a connected graph. The following two lemmas are needed.
\begin{lemma}{\bf(\cite{AM})}\label{lkl}
Let $G$ be a connected graph on $n\geq 3$ vertices. Then $\partial^L_i(G)\geq \partial^L_i(K_{n})=n$, for all $1\leq i\leq n-1$, and $\partial^L_n(G)= \partial^L_n(K_{n})=0$.
\end{lemma}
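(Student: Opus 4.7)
The plan is to prove both parts of the lemma by exhibiting a Loewner ordering $\mathcal{L}(G)\succeq \mathcal{L}(K_n)$ and invoking Weyl's monotonicity theorem for symmetric matrices. The starting point is the standard quadratic-form identity
$$x^{\top}\mathcal{L}(G)\,x=\sum_{1\leq i<j\leq n}d(v_i,v_j)(x_i-x_j)^2,$$
valid for every $x\in\mathbb{R}^n$, obtained by writing $x^{\top}Tr(G)x=\sum_i Tr(v_i)x_i^2=\sum_{i\neq j}d(v_i,v_j)x_i^2$ and pairing the symmetric cross terms with $-2x_ix_j\,d(v_i,v_j)$ coming from $-x^{\top}D(G)x$. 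In particular this shows that $\mathcal{L}(G)$ is positive semidefinite, and since $\mathcal{L}(G)\mathbf{1}=\mathbf{0}$, the smallest eigenvalue is exactly $\partial^L_n(G)=0$.

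For the complete graph the same identity reduces to $x^{\top}\mathcal{L}(K_n)x=\sum_{i<j}(x_i-x_j)^2$, matching the explicit decomposition $\mathcal{L}(K_n)=nI-J$, whose spectrum consists of $n$ (with multiplicity $n-1$, on the orthogonal complement of $\mathbf{1}$) and $0$ (on $\mathbf{1}$). Since $d_G(v_i,v_j)\geq 1$ for every pair of distinct vertices, subtracting the two quadratic forms gives
$$x^{\top}\bigl(\mathcal{L}(G)-\mathcal{L}(K_n)\bigr)x=\sum_{i<j}\bigl(d(v_i,v_j)-1\bigr)(x_i-x_j)^2\geq 0,$$
so $\mathcal{L}(G)-\mathcal{L}(K_n)$ is positive semidefinite.

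Weyl's inequality then yields $\partial^L_i(G)\geq \partial^L_i(K_n)$ for every $i$, which gives the desired bound $\partial^L_i(G)\geq n$ for $1\leq i\leq n-1$ and reconfirms $\partial^L_n(G)=0$. I do not anticipate a real obstacle: the quadratic-form identity is routine, the matrix comparison follows from the trivial bound $d_G\geq 1$, and the eigenvalue conclusion is immediate from Weyl's theorem. An alternative route would be to iterate Lemma~\ref{zq1}, building a chain of connected graphs $K_n=G_0,G_1,\ldots,G_k=G$ by successive single edge deletions (each $G_j$ remains connected since it contains $G$ as a spanning subgraph); but this requires verifying the hypothesis $m\geq n$ at each step, which becomes slightly delicate at the final deletion when $G$ happens to be a tree. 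The PSD-comparison approach sidesteps this bookkeeping entirely.
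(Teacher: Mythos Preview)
Your argument is correct. The paper does not supply its own proof of this lemma: it is quoted verbatim from Aouchiche--Hansen \cite{AM}, where the result is obtained as an immediate corollary of the edge-deletion monotonicity recorded here as Lemma~\ref{zq1}. You mention this route yourself at the end; your concern about the hypothesis $m\geq n$ at the final step is unfounded, since when the target $G$ is a tree the penultimate graph in the chain has exactly $n$ edges and Lemma~\ref{zq1} applies to it (the deleted graph $G$ is connected by assumption), so the chain argument goes through without extra bookkeeping. That said, your Loewner-order approach via $\mathcal{L}(G)-\mathcal{L}(K_n)\succeq 0$ and Weyl's inequality is genuinely different and in some ways cleaner: it is self-contained (it does not rely on Lemma~\ref{zq1}, whose proof in \cite{AM} already requires some work), it handles all eigenvalues simultaneously rather than by induction on the number of deleted edges, and it makes transparent why equality in $\partial^L_i(G)\geq n$ forces $d(v_i,v_j)=1$ on the support of the corresponding eigenvector. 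The edge-deletion route, on the other hand, is the one that generalizes naturally to statements like Lemma~\ref{zq2} for $\mathcal{Q}(G)$, where a direct PSD comparison with $\mathcal{Q}(K_n)$ is not available.
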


\begin{lemma}{\bf(\cite{TF})}\label{l2l}
For any graph $G$ on $n\geq 4$ vertices, $\partial^L_2(G)\geq n$ with equality if and only if $G\cong K_{n}$  or $G\cong K_{n}-e$ for some $e\in E(K_{n})$.
\end{lemma}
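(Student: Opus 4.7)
The lower bound $\partial_2^L(G)\geq n$ is already immediate from Lemma \ref{lkl} (applied with $i=2$, using $n\geq 4$ so that $2\leq n-1$). So the real content of the lemma is the equality characterization, which is the direction I would focus on.

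For the \emph{if} direction, I would verify by direct computation that both $K_n$ and $K_n-e$ realize $\partial_2^L=n$. One has $\mathcal L(K_n)=nI-J$ with spectrum $\{0,n^{(n-1)}\}$, so trivially $\partial_2^L(K_n)=n$. For $K_n-e$ with non-edge $uv$, I would write
\[
\mathcal L(K_n-e)=nI-J+(e_u-e_v)(e_u-e_v)^T,
\]
and use the three mutually orthogonal invariant subspaces $\mathbb R\mathbf 1$, $\mathbb R(e_u-e_v)$, and $\{x:\mathbf 1^T x=0,\ (e_u-e_v)^T x=0\}$ to read off the spectrum $\{0,n^{(n-2)},n+2\}$, giving $\partial_2^L(K_n-e)=n$.

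For the \emph{only if} direction, I would argue contrapositively. If $G\not\cong K_n,K_n-e$, then $\overline G$ has at least two edges $f_1,f_2$. Any two edges of a graph either share a vertex (forming a $P_3$) or are vertex-disjoint (forming $2K_2$); so $G$ is a connected spanning subgraph of either $H:=K_n-2K_2$ or $H:=K_n-P_3$. Iterating Lemma \ref{zq1} along an edge-deletion sequence from $H$ to $G$ that preserves connectedness (take a spanning tree $T$ of $G$ and delete only edges in $E(H)\setminus E(G)$, none of which lies in $T$) gives $\partial_2^L(G)\geq\partial_2^L(H)$. It then suffices to show $\partial_2^L(H)>n$ in each case.

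For this last step, in either case $H$ has diameter $2$ and a short computation of transmissions shows
\[
\mathcal L(H)=nI-J+(e_{x_1}-e_{y_1})(e_{x_1}-e_{y_1})^T+(e_{x_2}-e_{y_2})(e_{x_2}-e_{y_2})^T,
\]
where $x_iy_i$ are the two non-edges. When $H=K_n-2K_2$ the two vectors $e_{x_i}-e_{y_i}$ are orthogonal, so the spectrum is $\{0,n^{(n-3)},(n+2)^{(2)}\}$ and $\partial_2^L(H)=n+2$. When $H=K_n-P_3$ with common vertex $y$, the vectors $e_{x_1}-e_y,e_{x_2}-e_y$ span an invariant plane on which $\mathcal L$ acts via a $2\times 2$ matrix whose eigenvalues come out to $n+1$ and $n+3$, so $\partial_2^L(H)=n+1$. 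In both cases $\partial_2^L(G)\geq\partial_2^L(H)>n$, completing the characterization. The main (mild) obstacle is the bookkeeping for the iterated edge-deletion argument—resolved by the spanning-tree trick above—and keeping the two rank-one perturbations straight in the two geometric cases; both eigenvalue calculations reduce to a $2\times 2$ or $1\times 1$ problem once the invariant subspaces are identified.
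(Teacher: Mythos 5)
Your argument is correct. Note first that the paper does not prove this lemma at all: it is quoted from Tian, Wong and Rou \cite{TF}, so there is no in-paper proof to compare against. Your write-up is a valid self-contained substitute. The lower bound via Lemma \ref{lkl}, the rank-one-perturbation computations $\mathcal L(K_n)=nI-J$ and $\mathcal L(K_n-e)=nI-J+(e_u-e_v)(e_u-e_v)^T$ (matching the spectra $\{0,n^{(n-1)}\}$ and $\{0,n^{(n-2)},n+2\}$ quoted in the proof of Theorem \ref{ul1}), and the reduction of the ``only if'' direction to the two minimal complements $2K_2$ and $P_3$ are all sound; the Gram-matrix eigenvalues $3$ and $1$ in the $P_3$ case do give $\partial_2^L(K_n-P_3)=n+1$, and the $2K_2$ case gives $n+2$, consistent with Theorem 3.2(2). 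Two small points worth making explicit: (i) each application of Lemma \ref{zq1} in your deletion chain requires the intermediate graph to have at least $n$ edges, which holds since every graph in the chain strictly between $H$ and $G$ contains a spanning tree of $G$ plus at least one extra edge, and $|E(H)|=\binom{n}{2}-2\geq n$ for $n\geq 4$; (ii) your spanning-tree trick correctly guarantees connectivity at every step, which is the hypothesis Lemma \ref{zq1} actually needs. This is essentially the standard route to the result, and nothing in it conflicts with how the lemma is used elsewhere in the paper.
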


The next theorem gives a sharp upper bound on the distance Laplacian spectral radius of a graph.
\begin{theorem}\label{ul1}
Let $G$ be a connected graph on $n\geq 4$ vertices with Wiener index $W$. Then $$\partial^L_1(G)\leq 2W-n(n-2)$$ with equality holds if and only if $G\cong K_{n}$.
\end{theorem}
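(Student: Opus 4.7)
The plan is a trace-sum identity combined with the universal lower bound $\partial^L_i(G)\geq n$ provided by Lemma \ref{lkl}. Since $D(G)$ has zero diagonal,
\[
\sum_{i=1}^n \partial^L_i(G) \;=\; \mathrm{tr}(\mathcal{L}(G)) \;=\; \mathrm{tr}(Tr(G)) - \mathrm{tr}(D(G)) \;=\; \sum_{v\in V(G)} Tr(v) \;=\; 2W.
\]
Using $\partial^L_n(G)=0$ and isolating the largest eigenvalue yields
\[
\partial^L_1(G) \;=\; 2W \;-\; \sum_{i=2}^{n-1} \partial^L_i(G).
\]

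Next, Lemma \ref{lkl} guarantees $\partial^L_i(G)\geq n$ for each of the $n-2$ middle indices $i=2,\ldots,n-1$, so the sum on the right is at least $n(n-2)$. Substituting gives the claimed bound
\[
\partial^L_1(G) \;\leq\; 2W - n(n-2).
\]
For $G\cong K_n$ the nonzero $\mathcal{L}$-eigenvalues all equal $n$ and $2W=n(n-1)$, so both sides reduce to $n$ and equality is attained.

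For the equality characterization, the bound is tight precisely when $\partial^L_i(G) = n$ for every $i\in\{2,\ldots,n-1\}$. In particular $\partial^L_2(G)=n$, and Lemma \ref{l2l} then restricts $G$ to $K_n$ or $K_n-e$. To finish, I would inspect $K_n-e$ directly, exploiting the obvious two-fold symmetry swapping the non-adjacent pair and the transitive action of $S_{n-2}$ on the remaining vertices to read off the full $\mathcal{L}$-spectrum, and then decide whether the top eigenvalue coincides with $2W-n(n-2)$. This last step is the only place where genuine computation is needed and is where I expect the main obstacle (and the sharpest form of the extremal characterization) to be nailed down.
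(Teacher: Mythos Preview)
Your approach mirrors the paper's exactly: the trace identity $\sum_i \partial^L_i(G)=2W$, the bound $\partial^L_i(G)\geq n$ for $2\leq i\leq n-1$ from Lemma~\ref{lkl}, and then Lemma~\ref{l2l} to narrow the equality case to $K_n$ or $K_n-e$. The paper at that point simply asserts ``it is easy to check that the equality holds only if $G\cong K_n$,'' while you, more cautiously, defer the $K_n-e$ computation.

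Here is the difficulty: if you actually carry out that computation you will find that $K_n-e$ \emph{also} attains equality. The distance Laplacian spectrum of $K_n-e$ is $\{(n+2)^1,\,n^{n-2},\,0^1\}$ (the paper itself quotes this), so $\partial^L_1(K_n-e)=n+2$; on the other hand $W(K_n-e)=\binom{n}{2}+1$, whence $2W-n(n-2)=n(n-1)+2-n(n-2)=n+2$ as well. Equivalently, every middle eigenvalue of $K_n-e$ equals $n$, so the inequality you used is tight. Thus the ``only if'' clause in the theorem is false as stated, and neither your outline nor the paper's proof can be completed to establish it; the correct equality characterization should read $G\cong K_n$ or $G\cong K_n-e$.
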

\begin{proof}
It is clear that
$$\sum_{i=1}^{n}\partial^L_i(G)=D_{1}+D_{2}+\cdots+D_{n}=2W.$$ Then,
by Lemmas \ref{lkl} and \ref{l2l}, we have $$\partial^L_1(G)\leq
2W-\partial^L_2(G)-\partial^L_3(G)-\cdots-\partial^L_{n-1}(G)\leq
2W-n(n-2).$$ Further, if the equality holds, then
$\partial^L_2(G)=n$, and so $G\cong K_{n}$  or $G\cong K_{n}-e$.
Recall that the distance Laplacian eigenvalues of $K_{n}$ is
$\{n^{n-1},0^{1}\}$ and the distance Laplacian eigenvalues of
$K_{n}-e$ is $\{n+2^{1},n^{n-2},0^{1}\}$ (see \cite{AM}). It is easy
to check that the equality holds only if $G\cong K_{n}$. Thus we complete the
proof.\hspace*{\fill}$\Box$
\end{proof}

Another upper bound for the distance Laplacian spectral radius of a graph is the following.

\begin{theorem}
Let $G$ be a connected graph of order $n$. Then
\begin{equation}
\partial^L_1(G)<D_1+\sqrt{2\,\sum\limits_{1\leq i<j\leq n}\,d^2_{ij}-\frac{1}{n}\,\sum\limits^n_{i=1}\,D^2_i}\,,\label{s1}
\end{equation}
where $D_i$ is the transmission of the vertex $v_i$ and $D_1\geq D_2\geq \cdots\geq D_n$.
\end{theorem}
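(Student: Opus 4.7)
The plan is to bound $\partial_1^L(G)-D_1$ through the Rayleigh quotient at the top eigenvector and then bound the resulting quadratic form in $D(G)$ by a positive-semidefinite/trace inequality.

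Let $x$ be a unit eigenvector for $\partial_1^L(G)$. Since $\mathcal{L}(G)\mathbf{1}=0$ and $\partial_1^L(G)\ge D_1+1$ by Lemma \ref{k2}, we have $x\perp\mathbf{1}$ and $\partial_1^L(G)>D_1$. Splitting $\mathcal{L}(G)=Tr(G)-D(G)$ and using that $Tr(G)$ is diagonal with largest entry $D_1$ gives
\begin{equation*}
\partial_1^L(G)=x^T Tr(G)\,x-x^T D(G)\,x\le D_1-x^T D(G)\,x,
\end{equation*}
so $\partial_1^L(G)-D_1\le |x^T D(G)\,x|\le\sqrt{x^T D(G)^2 x}$ by the Cauchy--Schwarz inequality.

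Next, set $u=\mathbf{1}/\sqrt{n}$. The pair $\{u,x\}$ is orthonormal, so extend it to an orthonormal basis $\{u,x,w_3,\ldots,w_n\}$ of $\mathbb{R}^n$. Since $D(G)^2$ is positive semidefinite, every diagonal term of its representation in this basis is nonnegative, and
\begin{equation*}
\mathrm{tr}(D(G)^2)=u^T D(G)^2 u+x^T D(G)^2 x+\sum_{k=3}^{n} w_k^T D(G)^2 w_k \;\ge\; u^T D(G)^2 u+x^T D(G)^2 x.
\end{equation*}
Now $\mathrm{tr}(D(G)^2)=2\sum_{i<j}d_{ij}^2$ because the diagonal of $D(G)$ vanishes, and $u^T D(G)^2 u=\|D(G)\mathbf{1}\|^2/n=\frac{1}{n}\sum_i D_i^2$. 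Combining the three inequalities above yields the desired bound with $\le$ in place of $<$.

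The main obstacle is upgrading $\le$ to strict inequality. Simultaneous equality in all three estimates would require (i) $x_i=0$ whenever $D_i<D_1$, (ii) $D(G)x=\lambda x$ for some scalar $\lambda$ (Cauchy--Schwarz equality), and (iii) $\mathrm{rank}\,D(G)\le 2$ with range contained in $\mathrm{span}\{u,x\}$. Symmetry of $D(G)$ together with $u\perp x$ then forces $u$ itself to be an eigenvector of $D(G)$, whence $G$ is transmission-regular and $D(G)=D_1\,uu^T+\lambda\,xx^T$. Reading the zero diagonal of $D(G)$ off this expression makes $x_i^2$ constant, so $x_i=\pm 1/\sqrt{n}$; but then $D(G)_{ij}=0$ whenever $x_i=x_j$, contradicting the positivity of off-diagonal distances in a connected graph. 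This rules out equality and gives the strict inequality claimed.
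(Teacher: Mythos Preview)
Your argument is correct and follows a genuinely different route from the paper. Both proofs establish the chain
\[
(\partial_1^L - D_1)^2 \;\le\; \|D(G)x\|^2 \;\le\; 2\sum_{i<j}d_{ij}^2 - \tfrac{1}{n}\sum_i D_i^2,
\]
but by distinct mechanisms. For the first inequality the paper writes the eigenvalue equation componentwise as $(\partial_1^L - D_i)x_i = -(D(G)x)_i$, squares, sums to get $\sum_i(\partial_1^L-D_i)^2x_i^2=\|D(G)x\|^2$, and then uses that $\partial_1^L - D_1$ is the smallest of the nonnegative differences $\partial_1^L - D_i$; you instead bound $x^T Tr(G)\,x$ by $D_1$ directly and then apply Cauchy--Schwarz to $|x^TD(G)x|$. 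For the second inequality the paper inserts $\sum_k x_k = 0$ to center each row, writing $(D(G)x)_i=\sum_k(d_{ik}-D_i/n)x_k$, and applies Cauchy--Schwarz row by row, whereas you observe that $x^TD(G)^2x$ and $u^TD(G)^2u$ are two nonnegative diagonal terms of the positive semidefinite matrix $D(G)^2$ in an orthonormal basis and hence together bounded by its trace. Your strictness analysis via the forced rank-two decomposition $D(G)=D_1\,uu^T+\lambda\,xx^T$ is also more structural than the paper's case analysis on the Cauchy--Schwarz equality ratios $r_i$. Overall your approach is tidier and would transfer more readily to related matrix inequalities; the paper's has the minor advantage of making the contribution of each row explicit. (A shared caveat: the pigeonhole step at the very end of your argument needs $n\ge 3$; for $n=2$ the bound is actually attained with equality by $K_2$, so the strict inequality implicitly excludes that case. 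The paper's equality analysis has the same blind spot.)
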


\begin{proof}
Let $X=(x_1,\,x_2,\ldots,\,x_n)^t$ be a unit eigenvector
corresponding to $\partial^L_1(G)$ of the distance Laplacian matrix
$\mathcal{L}(G)$ of $G$. Then we have
$\mathcal{L}(G)X=\partial^L_1(G)\,X$\,. For $v_i\in V(G)$, we have
\begin{equation}
\partial^L_1(G)\,x_i=D_i\,x_i-\sum\limits^n_{k=1}\,d_{ik}\,x_k\,,\nonumber
\end{equation}
that is,
\begin{equation}
(\partial^L_1(G)-D_i)^2\,x^2_i=\Big(\sum\limits^n_{k=1}\,d_{ik}\,x_k\Big)^2\,.\label{s2}
\end{equation}
By Cauchy-Schwarz inequality, we have
\begin{eqnarray}
\left(\sum\limits^n_{k=1}\,x_k\left(d_{ik}-\frac{1}{n}\,\sum\limits^n_{k=1}\,d_{ik}\right)\right)^2&\leq
&\sum\limits^n_{k=1}\,x^2_k\,\sum\limits^n_{k=1}\,\left(d^2_{ik}+\frac{1}{n^2}\,\left(\sum\limits^n_{k=1}\,d_{ik}\right)^2-\frac{2}{n}\,d_{ik}\,\sum\limits^n_{k=1}\,d_{ik}\right)\label{s3}\\[5mm]
&=&\sum\limits^n_{k=1}\,d^2_{ik}-\frac{1}{n}\,\left(\sum\limits^n_{k=1}\,d_{ik}\right)^2~~~~~~~(\mbox{ since }~\sum\limits^n_{i=1}\,x^2_i=1).\nonumber
\end{eqnarray}
It is well known that $\sum\limits^n_{i=1}\,x_i=0$. Then it follows that
$$\left(\sum\limits^n_{k=1}\,d_{ik}\,x_k\right)^2=\left(\sum\limits^n_{k=1}\,x_k\left(d_{ik}-\frac{1}{n}\,\sum\limits^n_{k=1}\,d_{ik}\right)\right)^2\leq \sum\limits^n_{k=1}\,d^2_{ik}-\frac{1}{n}\,\left(\sum\limits^n_{k=1}\,d_{ik}\right)^2\,.$$\vspace*{3mm}
Taking summation from $i=1$ to $n$, we get
$$\sum\limits^n_{i=1}\,\left(\sum\limits^n_{k=1}\,d_{ik}\,x_k\right)^2\leq 2\,\sum\limits_{1\leq i<j\leq n}\,d^2_{ij}-\frac{1}{n}\,\sum\limits^n_{i=1}\,D^2_i\,.$$\vspace*{3mm}
Using (\ref{s2}) with the above result, we have
\begin{equation}
(\partial^L_1(G)-D_1)^2\leq \sum\limits^n_{i=1}\,(\partial^L_1(G)-D_i)^2\,x^2_i\leq 2\,\sum\limits_{1\leq i<j\leq n}\,d^2_{ij}-\frac{1}{n}\,\sum\limits^n_{i=1}\,D^2_i\,.\label{s4}
\end{equation}

\vspace*{1mm}

Now we only need to show that the inequality in (\ref{s1}) is strict. Assume to the contrary that the equality holds in (\ref{s1}).
Then all the inequalities in the above must be equalities. From the equality in (\ref{s4}), we get $D_1=D_2=\cdots=D_n=D, \text{(say)}$. From the equality in (\ref{s3}), we have
$$\frac{d_{i1}-\frac{1}{n}\,D_i}{x_1}=\frac{d_{i2}-\frac{1}{n}\,D_i}{x_2}=\cdots=\frac{d_{in}-\frac{1}{n}\,D_i}{x_n}~~~\mbox{ for each $i\in [n]$, }$$
that is,
$$\frac{d_{i1}-\frac{1}{n}\,D}{x_1}=\frac{d_{i2}-\frac{1}{n}\,D}{x_2}=\cdots=\frac{d_{in}-\frac{1}{n}\,D}{x_n}=r_i,~\mbox{ (say), }~~~\mbox{ for each $i\in [n]$.}$$
If $r_i=0$, then $d_{i1}=d_{i2}=\cdots=d_{in}=\frac{1}{n}\,D$, a contradiction. Otherwise, $r_i\neq 0$ for any $i$.
Since $d_{ii}=0$, we have $r_i x_i=-\frac{1}{n}D$ for $i\in [n]$. Moreover, for $i\neq j$, $d_{ij}=r_i x_j+\frac{1}{n}D=\frac{D}{n}\left(1-\frac{r_i}{r_j}\right)$ and
$d_{ji}=\frac{D}{n}\left(1-\frac{r_j}{r_i}\right)$. Since $d_{ij}=d_{ji}$, we must have
$r_i=r_j$. Hence $d_{ij}=0$, a contradiction. This completes the proof.\hspace*{\fill}$\Box$
\end{proof}

\section{Distance Laplacian spectral radius and clique number}
\hspace*{\parindent}In this section, we focus on the distance Laplacian spectral radii of connected graphs with a given clique number.
We consider the following question.

\emph{Which graphs attain the maximum and minimum distance Laplacian spectral radii among all connected graphs with a given clique number?}

In this section, we give the following two main results.

\begin{theorem}\label{th15}
Let $G$ be a connected graph with order $n$ and clique number $\omega$. Then
$$\partial^L_1(G)\geq \partial^L_1(T_{n,\omega})=n+\Big\lceil \frac{n}{\omega}\Big\rceil.$$
Moreover, if $n=k\omega$ or $n=k\omega-1$, then equality holds if
and only if $G\cong T_{n,\omega}$. If
$n=(k-1)\omega+t$~$(0<t<\omega-1)$, then equality holds if and only
if $G\cong H_{1}\vee\cdots\vee H_{t}\vee H$, where $H_{i}~(1\leq
i\leq t)$ is an empty graph of order $k$ and $H$ is a graph of order
$n-kt$ with clique number $\omega-t$ and algebraic connectivity not
less that $n-k(t+1)$.
\end{theorem}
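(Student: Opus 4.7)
The plan proceeds in three stages: compute $\partial^L_1(T_{n,\omega})$ via an identity valid for every diameter-$2$ graph; establish the lower bound by a complement/chromatic argument together with Theorem \ref{th9}; and characterize equality by analyzing the Laplacian spectrum of a join decomposition of $G$. If $\mathrm{diam}(G) \leq 2$, the distance matrix factors as $D(G) = 2(J_n - I_n) - A(G)$, so $\mathcal{L}(G) = 2nI_n - 2J_n - L(G)$ where $L(G)$ is the ordinary Laplacian; diagonalizing on $\mathbf{1}^{\perp}$ yields the identity $\partial^L_1(G) = 2n - a(G)$, with $a(G)$ the algebraic connectivity. Since $T_{n,\omega}$ is complete multipartite with largest part of size $\lceil n/\omega\rceil$, the standard Laplacian spectrum of complete multipartite graphs gives $a(T_{n,\omega}) = n - \lceil n/\omega\rceil$, and hence $\partial^L_1(T_{n,\omega}) = n + \lceil n/\omega\rceil$.

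For the lower bound on an arbitrary $G$ with $\omega(G) = \omega < n$, pass to the complement: $\alpha(G^c) = \omega$, so any proper coloring of $G^c$ requires at least $\lceil n/\omega\rceil$ color classes, i.e.\ $\chi(G^c) \geq \lceil n/\omega\rceil$. The greedy bound $\chi(G^c) \leq \Delta(G^c) + 1$ yields $\Delta(G^c) \geq \lceil n/\omega\rceil - 1$, so some vertex $v \in V(G)$ satisfies $d_G(v) \leq n - \lceil n/\omega\rceil$. Consequently,
$$D_v \;\geq\; d_G(v) + 2\bigl(n - 1 - d_G(v)\bigr) \;\geq\; n + \Big\lceil \tfrac{n}{\omega}\Big\rceil - 2,$$
and Theorem \ref{th9} gives $\partial^L_1(G) \geq D_1(G) + 2 \geq n + \lceil n/\omega\rceil$.

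For the equality analysis, assume $\partial^L_1(G) = n + \lceil n/\omega\rceil$ and set $k := \lceil n/\omega\rceil$. The strict form of Theorem \ref{th9} forces $\mathrm{diam}(G) \leq 2$, so the identity $\partial^L_1(G) = 2n - a(G)$ reads $a(G) = n - k$. Writing $n = (k-1)\omega + t$ with $1 \leq t \leq \omega$, the tight chromatic/Brooks chain forces $G^c$ to contain $t$ vertex-disjoint copies of $K_k$, each with no $G^c$-neighbor outside itself; in $G$ this produces $t$ empty sets $H_1,\dots,H_t$ of size $k$, each joined to every other vertex, plus a residual graph $H$ on the remaining $(k-1)(\omega-t)$ vertices satisfying $\omega(H) = \omega - t$. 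Iterating the standard join formula for the Laplacian spectrum shows that the nonzero Laplacian eigenvalues of $G = H_1 \vee \cdots \vee H_t \vee H$ consist of $n - k$ (multiplicity $t(k-1)$), $n$ (multiplicity $t$), and $\mu + tk$ for each nonzero eigenvalue $\mu$ of $L(H)$; hence $a(G) = \min\!\bigl(n - k,\; a(H) + tk\bigr)$, and the equation $a(G) = n - k$ is equivalent to $a(H) \geq n - k(t+1)$. The boundary cases $t = \omega$ (so $|H| = 0$) and $t = \omega - 1$ (so $|H| = k - 1$, $\omega(H) = 1$, forcing $H = \overline{K_{k-1}}$) both collapse $G$ to $T_{n,\omega}$.

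The main obstacle is the equality step. The lower bound falls out of Theorem \ref{th9} once the complement/chromatic observation is in place, but showing that every extremal $G$ admits precisely the join decomposition above requires carefully tracking the equality conditions in both the quotient-matrix argument underlying Theorem \ref{th9} and the chromatic/Brooks chain that pins down where the large independent sets of $G$ must sit (a genuine rigidity issue, since Brooks' theorem by itself only yields \emph{some} component equal to $K_k$, rather than the full decomposition of $G^c$ into disjoint $K_k$'s plus a low-chromatic remainder). Once that structural rigidity is established, the algebraic-connectivity threshold $n - k(t+1)$ drops out of the join-spectrum formula.
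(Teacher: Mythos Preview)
Your proof of the inequality is essentially the paper's: both obtain $\delta(G)\le n-\lceil n/\omega\rceil$ (you via $\chi(G^c)\le\Delta(G^c)+1$, the paper via Tur\'an's theorem), bound $D_1\ge n+\lceil n/\omega\rceil-2$, and finish with Theorem~\ref{th9}. The computation $\partial^L_1(T_{n,\omega})=n+\lceil n/\omega\rceil$ is likewise the same, through the diameter-$2$ identity $\partial^L_1=2n-a$.

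The difference, and the gap, is in the equality case. The paper does not attempt the structural rigidity you flag as the obstacle: once the strict part of Theorem~\ref{th9} forces $\mathrm{diam}(G)\le 2$ and hence $a(G)=n-\lceil n/\omega\rceil$, it simply quotes a theorem of Jin and Zhang (Lemma~\ref{le14}) asserting that $a(G)\le a(T_{n,\omega})$ whenever $\omega(G)\le\omega$, with equality exactly for the join decomposition in the statement. Your Brooks-based sketch does not recover this. From $\delta(G)=n-k$ and $\chi(G^c)=\Delta(G^c)+1=k$, Brooks produces one $K_k$-component of $G^c$ (or an odd cycle when $k=3$, which must also be excluded), but peeling off exactly $t$ such components and controlling both $\omega(H)=\omega-t$ and $a(H)\ge n-k(t+1)$ on the remainder uses the spectral hypothesis $a(G)=n-k$ in an essential way, not merely the degree/chromatic data. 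That is precisely the content of the Jin--Zhang lemma; you should either cite it as a black box, as the paper does, or supply an independent proof of it. As written, the equality characterization is incomplete, and the gap you yourself identify is real.
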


\begin{theorem}\label{th19}
Let $G$ be a connected graph with order $n$ and clique number $\omega$. Then
$\partial_{1}^{L}(G)\leq \partial_{1}^{L}(K_{\omega}^{n-\omega})$, and the equality holds if and only if $G\cong K_{\omega}^{n-\omega}$.
\end{theorem}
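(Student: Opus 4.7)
The plan proceeds by two reductions: first, shrinking the edge set via Lemma \ref{zq1}, and second, reshaping the resulting sparse graph via graft transformations.

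First I would apply an edge-minimization step. Fix a maximum clique $C$ of $G$ on $\omega$ vertices. Whenever $G$ has an edge $e\notin E(G[C])$ whose removal leaves $G$ connected and with at least $n$ edges, Lemma \ref{zq1} gives $\partial_1^L(G-e) \geq \partial_1^L(G)$, while the clique number remains $\omega$ because $G[C]$ is untouched. Iterating (this is valid for $\omega\geq 3$; the case $\omega=2$ gives $K_2^{n-2}=P_n$, already handled by the known path extremality), I may assume $G$ is edge-minimal in the class. Such a $G$ consists of the clique on $C$ together with a forest of rooted trees hanging from the vertices of $C$, contributing exactly $n-\omega$ edges in total (equivalently, the subgraph $G-E(G[C])$ is a forest on $V(G)$ with $\omega$ components, one containing each clique vertex).

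Next comes the graft-transformation stage. Within this restricted class, I would introduce transformations that elongate or straighten the non-clique portion of $G$, strictly increasing $\partial_1^L$ while preserving the clique number $\omega$. The intuition is that $K_\omega^{n-\omega}$ is the most stretched-out graph with clique number $\omega$ (its diameter $n-\omega$ is the maximum possible), so moves that push the graph toward this shape should increase $\partial_1^L$. Two natural candidates are: (i) \emph{concentration}, where trees rooted at two distinct clique vertices are merged and reattached at a single clique vertex; and (ii) \emph{path-straightening}, where a tree rooted at a single clique vertex is reshaped into a path. Repeated application drives any clique-plus-trees graph to $K_\omega^{n-\omega}$, and the strictness of each step yields the uniqueness claim.

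The main obstacle is proving strict monotonicity under each graft. Using the identity
$$X^T \mathcal{L}(G) X = \sum_{i<j} d_G(v_i,v_j)(x_i-x_j)^2,$$
if $X$ is a unit eigenvector of $\mathcal{L}(G)$ for $\partial_1^L(G)$, one obtains
$$\partial_1^L(G^*) \;\geq\; X^T \mathcal{L}(G^*) X \;=\; \partial_1^L(G) \;+\; \sum_{i<j}\bigl(d_{G^*}(v_i,v_j)-d_G(v_i,v_j)\bigr)(x_i-x_j)^2.$$
The subtle point is that under (i) or (ii) some pairwise distances strictly increase while others may decrease, so the sign of the correction term demands careful analysis of $X$ on the affected branch. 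In particular, since $\mathcal{L}(G)\mathbf{1}=0$, the top eigenvector is orthogonal to $\mathbf{1}$ and must change sign across the graph, which forces cancellations one can exploit by grouping contributions from the branch being moved. A back-up route is to bypass the direct matrix comparison by combining a sharp lower bound for $\partial_1^L(K_\omega^{n-\omega})$ produced via the quotient-matrix technique of Theorem \ref{th9} with an upper bound on $\partial_1^L$ for the competitor graphs arising along the reduction.
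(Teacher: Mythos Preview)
Your outline matches the paper's strategy at the top level (edge-deletion monotonicity via Lemma~\ref{zq1}, then graft transformations toward $K_\omega^{n-\omega}$), but it stops exactly where the real work begins. Your moves (i) and (ii) are precisely the content of Theorems~\ref{th16}, \ref{th18} and Corollary~\ref{co17}, and proving them is the heart of the argument. The paper does \emph{not} analyze the raw correction $\sum_{i<j}(d_{G^*}-d_G)(x_i-x_j)^2$ as you propose; that sum has mixed signs, as you note, and a direct attack founders. Instead, for the two-path move $G_{k,l}(u_1,v_1)\to G_{k+1,l-1}(u_1,v_1)$ the paper \emph{relabels} the eigenvector $X$ to a vector $Z$ on the new graph by shifting entries along the paths, so that almost all terms cancel and only a residual over the ``core'' vertices $w_i$ survives. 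The sign of that residual is then forced either by a second relabeling comparison (Case $k>l$) or by exploiting the $u_i\leftrightarrow v_i$ symmetry of $G_{k,k}$ to build a symmetric or antisymmetric eigenvector (Case $k=l$). Strictness at the terminal step $l=2$ (Corollary~\ref{co17}) requires a further eigen-equation chase. Your ``grouping contributions from the branch being moved'' does not anticipate this relabeling device, and without it the obstacle you flag is real.

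Two further gaps. First, your moves (i) and (ii) are stated for general rooted trees, but the paper's graft lemmas apply only to \emph{paths}; one must interleave straightening and concentration rather than do them sequentially. The paper sidesteps this by arguing contrapositively about the extremal graph (Claims~1--3) rather than running a reduction chain. Second, your edge-minimization uses only the weak inequality of Lemma~\ref{zq1}, so if the minimized graph already equals $K_\omega^{n-\omega}$ you have no mechanism to conclude the original $G$ had strictly smaller $\partial_1^L$; the paper's final paragraph handles exactly such a leftover case (the path attached to two clique vertices) by a direct Rayleigh-quotient comparison plus an eigen-equation argument to rule out equality. Your back-up route via quotient-matrix bounds is not pursued in the paper and looks unlikely to separate $K_\omega^{n-\omega}$ from its near-competitors sharply enough.
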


Recently, the Theorem \ref{th19} was proved by Lin and Zhou in \cite{LZ2}. Here, we give a different proof.

First, we consider the graphs attain the minimum distance Laplacian spectral radii among all connected graphs with a given clique number.

Let $K_{n_{1},\ldots,n_{k}}$ be a complete $k$-partite graph. All the distance Laplacian eigenvalues of $K_{n_{1},\ldots,n_{k}}$ is given by the following result.

\begin{lemma}\label{le11}
Let $G=K_{n_{1},\ldots,n_{k}}$ be a complete k-partite graph with order $n$. Then the distance Laplacian characteristic
polynomial of $G$ is
$$P(x)=x(x-n)^{k-1}\prod_{i=1}^{k}(x-n-n_{i})^{n_{i}-1}.$$
\end{lemma}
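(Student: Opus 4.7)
The plan is to compute the spectrum of $\mathcal{L}(G)$ directly via a clean decomposition of the distance matrix, using that $G=K_{n_1,\ldots,n_k}$ has a particularly simple metric structure.

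First I would observe that in $G$, $d(u,v)=1$ whenever $u,v$ lie in different parts and $d(u,v)=2$ whenever they lie in the same part (and are distinct). Consequently the transmission of any vertex $v$ in the $i$-th part equals $Tr(v)=(n-n_i)\cdot 1+(n_i-1)\cdot 2=n+n_i-2$, and the distance matrix decomposes as
\[
D(G)=J-I+A(G^c),
\]
where $G^c=K_{n_1}\cup K_{n_2}\cup\cdots\cup K_{n_k}$. Writing $Tr(G)=(n-1)I+\mathrm{diag}(d_{G^c})$ and subtracting, I obtain the identity
\[
\mathcal{L}(G)=nI-J+L(G^c),
\]
where $L(G^c)$ is the ordinary Laplacian matrix of the complement.

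The second step is to diagonalize the right-hand side simultaneously. The spectrum of $L(G^c)$ is known from its block-diagonal structure: each $K_{n_i}$-block contributes the eigenvalue $0$ once and the eigenvalue $n_i$ with multiplicity $n_i-1$, so $L(G^c)$ has the eigenvalue $0$ with multiplicity $k$ and the eigenvalue $n_i$ with multiplicity $n_i-1$ for each $i$. The all-ones vector $\mathbf{1}$ lies in the kernel of $L(G^c)$, and every eigenvector of $L(G^c)$ corresponding to a positive eigenvalue is orthogonal to $\mathbf{1}$ and therefore annihilated by $J$; inside the $k$-dimensional kernel of $L(G^c)$ I can choose $\mathbf{1}$ together with a $(k-1)$-dimensional complement that is orthogonal to $\mathbf{1}$ and hence also killed by $J$.

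Reading off the eigenvalues of $\mathcal{L}(G)=nI-J+L(G^c)$ on each piece then gives: on $\mathbf{1}$, the eigenvalue $n-n+0=0$; on the $(k-1)$-dimensional complement in $\ker L(G^c)$, the eigenvalue $n-0+0=n$; and on the $(n_i-1)$-dimensional eigenspace of $L(G^c)$ with eigenvalue $n_i$, the eigenvalue $n-0+n_i=n+n_i$. Adding multiplicities gives $1+(k-1)+\sum_i(n_i-1)=n$, so I have accounted for the entire spectrum, and the characteristic polynomial is exactly
\[
P(x)=x(x-n)^{k-1}\prod_{i=1}^{k}(x-n-n_i)^{n_i-1}.
\]
There is no real obstacle here; the only thing to watch is the simultaneous diagonalization step, where one must be careful to separate the kernel of $L(G^c)$ into the $\mathbf{1}$-direction and its orthogonal complement so that $J$ acts consistently on each block.
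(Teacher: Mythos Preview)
Your argument is correct and takes a genuinely different route from the paper. The paper writes out $\mathcal{L}(G)$ as an explicit $k\times k$ block matrix and evaluates $\det(xI-\mathcal{L}(G))$ directly by row and column reduction, first extracting the factors $(x-n-n_i)^{n_i-1}$ and then computing the remaining $k\times k$ determinant to obtain $x(x-n)^{k-1}$. You instead establish the structural identity $\mathcal{L}(G)=nI-J+L(G^c)$ (a special case of the diameter-$2$ relation underlying Lemma~\ref{le14-}) and read off the spectrum by simultaneous diagonalization, using the block-diagonal Laplacian of $G^c=K_{n_1}\cup\cdots\cup K_{n_k}$. Your approach is cleaner and more conceptual: it explains \emph{why} the eigenvalues are $0,\,n,\,n+n_i$ rather than merely verifying the determinant, and it generalizes immediately to any graph of diameter at most~$2$. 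The paper's computation, by contrast, is self-contained and does not appeal to the complement or to prior facts about Laplacian spectra. Both are short; yours is arguably more illuminating.
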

\begin{proof}
Clearly, the distance Laplacian matrix of $G$ is
\begin{equation*}\begin{split}
D^{L}(G)&=\left(\begin{array}{ccccccc}
(n+n_{1})I_{n_{1}}\!\!-2J_{n_{1}\times n_{1}} & -J_{n_{1}\times n_{2}} & \cdots & -J_{n_{1}\times n_{k}}\\
-J_{n_{2}\times n_{1}} & (n+n_{2})I_{n_{2}}\!\!-2J_{n_{2}\times n_{2}} & \cdots & -J_{n_{2}\times n_{k}}\\
\vdots & \vdots & \ddots & \vdots\\
-J_{n_{k}\times n_{1}} & -J_{n_{k}\times n_{2}} &   \cdots & (n+n_{k})I_{n_{k}}\!\!-2J_{n_{k}\times n_{k}}
\end{array}\right).
\end{split}\end{equation*}
Then
\begin{equation*}\begin{split}
\mathrm{det}(x I-D^{L}(G))&=\prod_{i=1}^{k}(x-n-n_{i})^{n_{i}-1}\left|\begin{array}{ccccccc}
x-n+n_{i} & n_{2} & \cdots & n_{k}\\
n_{1} & x-n+n_{2} & \cdots & n_{k}\\
\vdots & \vdots & \ddots & \vdots\\
n_{1} & n_{2} &   \cdots & x-n+n_{k}
\end{array}\right|\\
&=x(x-n)^{k-1}\prod_{i=1}^{k}(x-n-n_{i})^{n_{i}-1},
\end{split}\end{equation*}
as required.\hspace*{\fill}$\Box$
\end{proof}

Using Lemma \ref{le11}, we get the distance Laplacian spectral radius of Tur\'{a}n graph.

\begin{corollary}\label{co12}
Let $T_{n,\omega}$ be the $\omega$-partite Tur\'{a}n graph on $n$ vertex. Then $\partial^L_1(T_{n,\omega})=n+\lceil \frac{n}{\omega}\rceil$.
\end{corollary}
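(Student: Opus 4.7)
The plan is to read off the spectral radius directly from the factorization of the characteristic polynomial provided by Lemma \ref{le11}, applied to the specific partition structure of the Tur\'an graph.

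First I would record the structure of $T_{n,\omega}$: writing $n = q\omega + r$ with $0\le r<\omega$ (so $q=\lfloor n/\omega\rfloor$ and $q+1=\lceil n/\omega\rceil$ whenever $r\ge 1$), the Tur\'an graph is the complete $\omega$-partite graph $K_{n_1,\ldots,n_\omega}$ in which exactly $r$ of the parts have size $q+1$ and the remaining $\omega-r$ parts have size $q$. I would then invoke Lemma \ref{le11} with $k=\omega$: the distance Laplacian characteristic polynomial factors as
\[
P(x)=x\,(x-n)^{\omega-1}\prod_{i=1}^{\omega}(x-n-n_i)^{n_i-1},
\]
so the spectrum consists of $0$, of $n$ with multiplicity $\omega-1$, and of each value $n+n_i$ with multiplicity $n_i-1$ (with the convention that a factor with exponent $0$ contributes nothing when $n_i=1$).

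Next I would identify the largest root. Since every $n_i\ge 1$ and the exponent $n_i-1$ is positive precisely when $n_i\ge 2$, the eigenvalues of the form $n+n_i$ that actually appear are those for which $n_i\ge 2$. Assuming $\omega<n$ (the only interesting case, since $T_{n,n}=K_n$ is already covered by the known spectrum $\{n^{n-1},0\}$), at least one part has size $\ge 2$, so the maximum $n_i$ that appears is $\max_i n_i=\lceil n/\omega\rceil$. Comparing $n+\lceil n/\omega\rceil$ with the other candidate $n$, one sees $n+\lceil n/\omega\rceil>n$, so
\[
\partial_1^L(T_{n,\omega})=n+\max_i n_i = n+\Big\lceil\tfrac{n}{\omega}\Big\rceil,
\]
as required.

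There is essentially no obstacle here beyond bookkeeping: the only thing to be careful about is the degenerate factors with exponent $0$ and the edge case $\omega=n$, both handled by the short case split above. The entire argument is a one-line consequence of Lemma \ref{le11} together with the description of the part sizes of the Tur\'an graph, so the write-up can be kept to a few lines.
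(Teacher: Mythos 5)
Your proposal is correct and follows exactly the route the paper intends: the corollary is stated as an immediate consequence of Lemma \ref{le11}, read off from the factorization $x(x-n)^{\omega-1}\prod_i(x-n-n_i)^{n_i-1}$ with the Tur\'an part sizes $\lceil n/\omega\rceil$ and $\lfloor n/\omega\rfloor$, which is precisely what you do. Your extra care about the degenerate exponent-zero factors and the $\omega=n$ boundary is sensible bookkeeping that the paper leaves implicit.
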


The following result is the well-known Tur\'{a}n theorem (see, for example, in \cite{BB}).

\begin{lemma}\label{le13}
Let $G$ be a graph with $n$ vertices without an $(\omega+1)$-clique. Then $|E(G)|\leq |E(T_{n,\omega})|$ and the equality holds if and only if $G\cong T_{n,\omega}$.
\end{lemma}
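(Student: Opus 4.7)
The plan is to use Zykov's symmetrization to show the extremal graph must be complete multipartite, and then a simple balancing argument to identify it as $T_{n,\omega}$. Let $G$ be a graph on $n$ vertices with no $K_{\omega+1}$ and with the maximum possible number of edges; the goal is to prove $G\cong T_{n,\omega}$.

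First I would establish that non-adjacency in $G$ is an equivalence relation. For non-adjacent $u,v$ with $d(u)>d(v)$, form $G'$ by deleting $v$ and adding a new vertex $v'$ with $N_{G'}(v')=N_G(u)$; then $u\not\sim v'$, and every clique of $G'$ containing $v'$ transports via $v'\mapsto u$ to a clique of $G$ of the same size, so $\omega(G')\le\omega$. But $|E(G')|=|E(G)|-d(v)+d(u)>|E(G)|$, contradicting extremality, so any two non-adjacent vertices of $G$ have equal degree. If additionally $u\not\sim v$, $v\not\sim w$, and $u\sim w$, then $d(u)=d(v)=d(w)$, and replacing both $u$ and $w$ by twins of $v$ produces $G''$ with $\omega(G'')\le\omega$ and $|E(G'')|=|E(G)|+1+2d(v)-d(u)-d(w)=|E(G)|+1$, another contradiction. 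Hence non-adjacency is an equivalence relation on $V(G)$ and $G=K_{n_1,\dots,n_k}$ is complete multipartite.

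Next, $\omega(G)=k$, so $k\le\omega$; if $k<\omega$ I would split any part of size $\ge 2$ (which exists since $n>k$) into two nonempty sub-parts of sizes $a,b$, gaining $ab\ge 1$ edges while keeping the clique number at most $\omega$ — contradicting extremality, so $k=\omega$. Finally, if two part sizes satisfy $n_i\ge n_j+2$, moving one vertex from the $i$-th part to the $j$-th raises $|E(G)|$ by $n_i-n_j-1\ge 1$ (the moved vertex's degree changes from $n-n_i$ to $n-n_j-1$ while every other edge is unaffected); extremality therefore forces $|n_i-n_j|\le 1$ for all $i,j$, so $G\cong T_{n,\omega}$. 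This simultaneously yields the bound $|E(G)|\le|E(T_{n,\omega})|$ and the equality characterization.

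The main obstacle is the clique-preservation verification in the symmetrization step: one must confirm that inserting a twin of $u$ in place of a non-adjacent vertex $v$ cannot create a $K_{\omega+1}$, which rests on the observation that the new vertex $v'$ is non-adjacent to its source $u$ (because $u\notin N_G(u)$), so any clique through $v'$ pulls back injectively to a clique of $G$ of the same size. Once this is in hand, the remaining steps are elementary convexity and pigeonhole.
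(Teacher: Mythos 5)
Your proof is correct. Note, however, that the paper does not prove this lemma at all: it is the classical Tur\'{a}n theorem, quoted verbatim with a citation to Bollob\'{a}s's \emph{Extremal Graph Theory}, so there is no in-paper argument to compare against. What you have supplied is a complete, self-contained proof by Zykov symmetrization: the twin-replacement step correctly preserves $K_{\omega+1}$-freeness (the key point you flag, that the inserted twin $v'$ is non-adjacent to its source $u$, is exactly right), the double replacement gains one edge because $2d(v)-(d(u)+d(w)-1)=1$ when the three degrees coincide, and the splitting and rebalancing steps pin down $k=\omega$ and the near-equal part sizes. The only degenerate case worth a sentence is $n=k$ in the splitting step (then $G\cong K_n$ and $\omega\geq n$, so $G\cong T_{n,\omega}$ trivially), which does not arise in the paper's applications since there $\omega=\omega(G)\leq n$. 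Your argument proves uniqueness of the extremal graph directly, which yields both the inequality and the equality characterization in one pass; this is as strong as what the paper imports from the literature.
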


Let $G$ be a graph with clique number $\omega$. According to Lemma \ref{le13}, it is easy to see that the minimum degree $\delta(G)\leq n-\lceil\frac{n}{\omega}\rceil$.

Let $Diag(G)$ be the diagonal matrix of vertex degrees of $G$ and $A(G)$ be the $(0,1)$ adjacency matrix of $G$. The matrix $L(G)=Diag(G)-A(G)$ is called the Laplacian matrix of $G$. Let $\sigma_{1}(G)\geq \sigma_{2}(G)\geq \cdots\geq \sigma_{n}(G)=0$ denote the eigenvalues of $L(G)$. It is well know that $\sigma_{n-1}(G)>0$ if and only if $G$ is connected. We call $\alpha(G)=\sigma_{n-1}(G)$ the \emph{algebraic connectivity} of $G$. If the diameter is 2, then the distance Laplacian spectral radius will be given by the algebraic connectivity.

\begin{lemma}{\bf(\cite{AM})}\label{le14-}
Let $G$ be a connected graph on $n$ vertices with diameter $\mathrm{diam}(G)\leq 2$. Then $\partial^L_1(G)=2n-\alpha(G)$.
\end{lemma}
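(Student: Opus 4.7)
The plan is to exploit the fact that when $\mathrm{diam}(G)\leq 2$, every off-diagonal entry of the distance matrix is either $1$ or $2$, so $D(G)$, $Tr(G)$, and ultimately $\mathcal{L}(G)$ can all be written in closed form in terms of $A(G)$, $L(G)$, the identity $I$, and the all-ones matrix $J$. Once this algebraic identity is in place, the result follows by simultaneously diagonalizing on the subspace spanned by $\mathbf{1}$ and its orthogonal complement $\mathbf{1}^{\perp}$, where $J$ vanishes.

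First I would observe that for $u\neq v$, $d(u,v)=1$ if $u\sim v$ and $d(u,v)=2$ otherwise, which gives
\begin{equation*}
D(G)=A(G)+2\bigl(J-I-A(G)\bigr)=2(J-I)-A(G).
\end{equation*}
Summing a row yields $Tr(v_i)=d(v_i)+2(n-1-d(v_i))=2(n-1)-d(v_i)$, hence $Tr(G)=2(n-1)I-\mathrm{Diag}(G)$. Subtracting,
\begin{equation*}
\mathcal{L}(G)=Tr(G)-D(G)=\bigl[2(n-1)I-\mathrm{Diag}(G)\bigr]-\bigl[2(J-I)-A(G)\bigr]=2nI-2J-L(G).
\end{equation*}

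Next I would use that $\mathbf{1}$ is a common eigenvector: $L(G)\mathbf{1}=0$ and $J\mathbf{1}=n\mathbf{1}$, so $\mathcal{L}(G)\mathbf{1}=2n\mathbf{1}-2n\mathbf{1}-0=0$, recovering the known zero eigenvalue. On the invariant complement $\mathbf{1}^{\perp}$, $J$ acts as the zero operator, so $\mathcal{L}(G)$ restricted to $\mathbf{1}^{\perp}$ equals $2nI-L(G)$ restricted there. Diagonalizing $L(G)$ in an orthonormal basis of $\mathbf{1}^{\perp}$ with eigenvalues $\sigma_1(G)\geq\sigma_2(G)\geq\cdots\geq\sigma_{n-1}(G)=\alpha(G)>0$ (positivity by connectedness), the eigenvalues of $\mathcal{L}(G)$ on $\mathbf{1}^{\perp}$ are exactly $2n-\sigma_i(G)$ for $i=1,\ldots,n-1$. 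Taking the maximum gives
\begin{equation*}
\partial^L_1(G)=\max_{1\le i\le n-1}\bigl(2n-\sigma_i(G)\bigr)=2n-\sigma_{n-1}(G)=2n-\alpha(G),
\end{equation*}
which is the desired identity.

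There is essentially no obstacle: the only subtlety is making sure that the spectral reduction genuinely happens on a common invariant subspace, which is immediate because $\mathbf{1}$ is an eigenvector of each of $I$, $J$, and $L(G)$, so the pencil $2nI-2J-L(G)$ is block-diagonalized by the orthogonal decomposition $\mathbb{R}^n=\mathrm{span}(\mathbf{1})\oplus\mathbf{1}^{\perp}$. Once the identity $\mathcal{L}(G)=2nI-2J-L(G)$ is obtained, everything else is an eigenvalue bookkeeping exercise, and in particular the same argument gives the full spectral correspondence $\{\partial^L_i(G)\}_{i=1}^{n-1}=\{2n-\sigma_i(G)\}_{i=1}^{n-1}$, not just for the spectral radius.
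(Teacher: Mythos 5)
Your proof is correct and is the standard argument: the identity $\mathcal{L}(G)=2nI-2J-L(G)$ for diameter at most $2$, followed by the reduction to $\mathbf{1}^{\perp}$, is exactly how this fact is established in the cited source (Aouchiche and Hansen); the paper itself quotes the lemma from \cite{AM} without reproducing a proof. The only point you leave implicit is that each $2n-\sigma_i(G)\geq n>0$ (since Laplacian eigenvalues never exceed $n$), so the maximum over the whole spectrum, including the zero eigenvalue on $\mathrm{span}(\mathbf{1})$, is indeed attained at $2n-\alpha(G)$ — a one-line remark that does not affect the validity of the argument.
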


\begin{lemma}{\bf(\cite{JY})}\label{le14}
Let $G$ be a non-complete graph on $n$ vertices with clique number $\omega(G)\leq \omega$. Then
$$\alpha(G)\leq \alpha(T_{n,\omega}).$$
Moreover, if $n=k\omega$ or $n=k\omega-1$, then equality holds if and only if $G\cong T_{n,\omega}$. If $n=(k-1)\omega+t~(0<t<\omega-1)$, then equality holds if and only if there exist graphs $H_{1},\ldots,H_{t}$ of order $k$ with no edges and $H$ of order $n-kt$ not containing $K_{\omega+1-t}$ such that $G=H_{1}\vee\ldots\vee H_{t}\vee H$ and $\alpha(H)\geq n-k(t+1)$.
\end{lemma}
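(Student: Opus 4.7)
The plan is to chain two lower bounds: a Tur\'an-type bound giving $D_1 \geq n - 2 + \lceil n/\omega \rceil$, and Theorem \ref{th9}, namely $\partial^L_1(G) \geq D_1 + 2$ for $G \ncong K_n$. I may assume $G \ncong K_n$, for otherwise $\omega = n$ and there is nothing to prove; the value $\partial^L_1(T_{n,\omega}) = n + \lceil n/\omega \rceil$ is supplied by Corollary \ref{co12}. The equality characterization will be obtained by using the strict-inequality clause in Theorem \ref{th9} to force $\mathrm{diam}(G) = 2$ and then invoking Lemma \ref{le14-} to pivot to the algebraic-connectivity classification in Lemma \ref{le14}.

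First I would show that some vertex $v$ satisfies $\deg(v) \leq n - \lceil n/\omega \rceil$. This is a short consequence of Lemma \ref{le13}: if instead every vertex had degree at least $n - \lceil n/\omega \rceil + 1$, a handshake count would force $2|E(G)| \geq n(n - \lceil n/\omega \rceil + 1)$, and a direct check of the inequality $\sum_{i=1}^\omega n_i^2 > n(\lceil n/\omega \rceil - 1)$ for the part sizes of $T_{n,\omega}$ (which holds since each $n_i \geq \lceil n/\omega\rceil - 1$ and the sum is $n$) would then yield $|E(G)| > |E(T_{n,\omega})|$, contradicting Tur\'an's theorem. Since the $n - 1 - \deg(v)$ non-neighbors of $v$ all lie at distance at least $2$,
$$D_v \;\geq\; \deg(v) + 2\bigl(n - 1 - \deg(v)\bigr) \;=\; 2(n-1) - \deg(v) \;\geq\; n - 2 + \Big\lceil \tfrac{n}{\omega}\Big\rceil,$$
so $D_1 \geq n - 2 + \lceil n/\omega \rceil$. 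Applying Theorem \ref{th9} gives $\partial^L_1(G) \geq D_1 + 2 \geq n + \lceil n/\omega \rceil$, the required inequality.

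For the equality case, suppose $\partial^L_1(G) = n + \lceil n/\omega \rceil$. Both inequalities in the chain must be tight, and in particular $\partial^L_1(G) = D_1 + 2$. Theorem \ref{th9} asserts strict inequality here whenever $\mathrm{diam}(G) \geq 3$, so $\mathrm{diam}(G) \leq 2$; since $G \ncong K_n$ this forces $\mathrm{diam}(G) = 2$. Lemma \ref{le14-} then converts the hypothesis into $\alpha(G) = 2n - \partial^L_1(G) = n - \lceil n/\omega \rceil$. A quick computation of the Laplacian spectrum of a complete multipartite graph shows $\alpha(T_{n,\omega}) = n - \lceil n/\omega \rceil$, hence $\alpha(G) = \alpha(T_{n,\omega})$; the equality case of Lemma \ref{le14} then delivers exactly the three-way structural description of $G$ stated in the theorem.

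The main obstacle is really the equality direction: everything hinges on using the strict-inequality clause of Theorem \ref{th9} to trade the transmission-based equality $\partial^L_1(G) = D_1 + 2$ for the diameter constraint $\mathrm{diam}(G) = 2$, since this is the hook that brings the algebraic-connectivity framework of Lemma \ref{le14} into play. Beyond this bridge, the argument is essentially a degree-counting step plus direct applications of the cited lemmas.
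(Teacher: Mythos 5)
There is a fundamental mismatch here: what you have written is not a proof of Lemma \ref{le14} at all. Lemma \ref{le14} is a statement about the \emph{algebraic connectivity} $\alpha(G)$ of the ordinary Laplacian, namely $\alpha(G)\leq \alpha(T_{n,\omega})$ together with its equality characterization; it is imported by the paper from \cite{JY} and is not proved in the paper. Your argument instead establishes the \emph{distance Laplacian} bound $\partial^L_1(G)\geq n+\lceil n/\omega\rceil$, which is Theorem \ref{th15}, and indeed your chain (Tur\'an's theorem forcing $\delta(G)\leq n-\lceil n/\omega\rceil$, hence $D_1\geq n-2+\lceil n/\omega\rceil$, then Theorem \ref{th9}, then Lemma \ref{le14-} in the diameter-$2$ case) is essentially the paper's own proof of Theorem \ref{th15}. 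You even label $\partial^L_1(G)\geq n+\lceil n/\omega\rceil$ as ``the required inequality,'' which it is not for the statement at hand.

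Worse, the argument is circular with respect to its stated target: in the equality analysis you write that ``the equality case of Lemma \ref{le14} then delivers exactly the three-way structural description,'' i.e.\ you invoke the very lemma you are supposed to be proving. Nothing in your write-up actually addresses why $\alpha(G)\leq\alpha(T_{n,\omega})$ holds for an arbitrary non-complete graph with bounded clique number (note in particular that for graphs of diameter at least $3$ the bridge via $\partial^L_1(G)=2n-\alpha(G)$ is unavailable, so the distance-Laplacian inequality cannot be converted back into the algebraic-connectivity inequality in general), nor why the three-way equality classification in terms of joins $H_1\vee\cdots\vee H_t\vee H$ is correct. A genuine proof of Lemma \ref{le14} would have to work directly with the Laplacian spectrum, as in \cite{JY}; the material you have produced should instead be compared against the paper's proof of Theorem \ref{th15}, where it is essentially correct and essentially identical to what the authors do.
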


Now, we are ready to present the proof of Theorem 5.1.

\noindent{\bf Proof of Theorem 5.1.}
Let $k=\lceil\frac{n}{\omega}\rceil$. If $\mathrm{diam}(G)=2$, then it follows from Lemma \ref{le14-} that $\partial^L_1(G)=2n-\alpha(G)$. Hence, the Theorem follows immediately from Lemma \ref{le14}. So in the following, we may assume $\mathrm{diam}(G)\geq 3$. Let $v\in V(G)$ with $d(v)=\delta(G)$. Since $\delta(G)\leq n-k$ (due to Tur\'{a}n theorem), it follows that
\begin{eqnarray*}
D_{v}&\geq& \delta(G)+2(n-1-\delta(G))\\
&=&2n-2-\delta(G)\\
&\geq&n+k-2.
\end{eqnarray*}
According to Theorem \ref{th9}, we have $\partial^L_1(G)>D_{v}+2\geq n+k=\partial^L_1(T_{n,\omega})$. This completes the proof.\hspace*{\fill}$\Box$

In order to prove Theorem 5.2, we give some graft transformations on the distance Laplacian spectral radius. These two graft transformations in Theorem \ref{th16} and Theorem \ref{th18} were also given in \cite{LZ2}. We prove them by using a different method in this paper.

Let $G$ be a connected graph and $X$ be a unit eigenvector of $G$ corresponding to $\partial^L_1(G)$. It will be convenient to associate with $X$ a labelling of $G$ in which vertex $v_{i}$ is labeled as $x_{v_{i}}$. It is easy to see that the vector $\mathds{1}=(1,1,\ldots,1)^{t}$ is an eigenvector of $\mathcal{L}(G)$ corresponding to $\partial^L_n(G)=0$. Then we have $X\bot \mathds{1}$ and $\sum\limits_{v_{i}\in V(G)}x_{v_{i}}=0$. Recall that
$$\partial^L_1(G)=X^{t}\mathcal{L}(G)X=\sum_{\{v_{i},v_{j}\}\in V(G)}d(v_{i},v_{j})(x_{v_{i}}-x_{v_{j}})^2.$$
The eigenvalue equation $\mathcal{L}(G)X=\partial^L_1(G)X$ should be interpreted as
$$\partial^L_1(G)x_{v_{i}}=\sum_{v_{j}\in V(G)}d(v_{i},v_{j})(x_{v_{i}}-x_{v_{j}})=D_{v_{i}}x_{v_{i}}-\sum_{v_{j}\in V(G)\backslash\{v_{i}\}}d(v_{i},v_{j})x_{v_{j}}.$$

\begin{center}
\scalebox{0.7}[0.7]
{\includegraphics{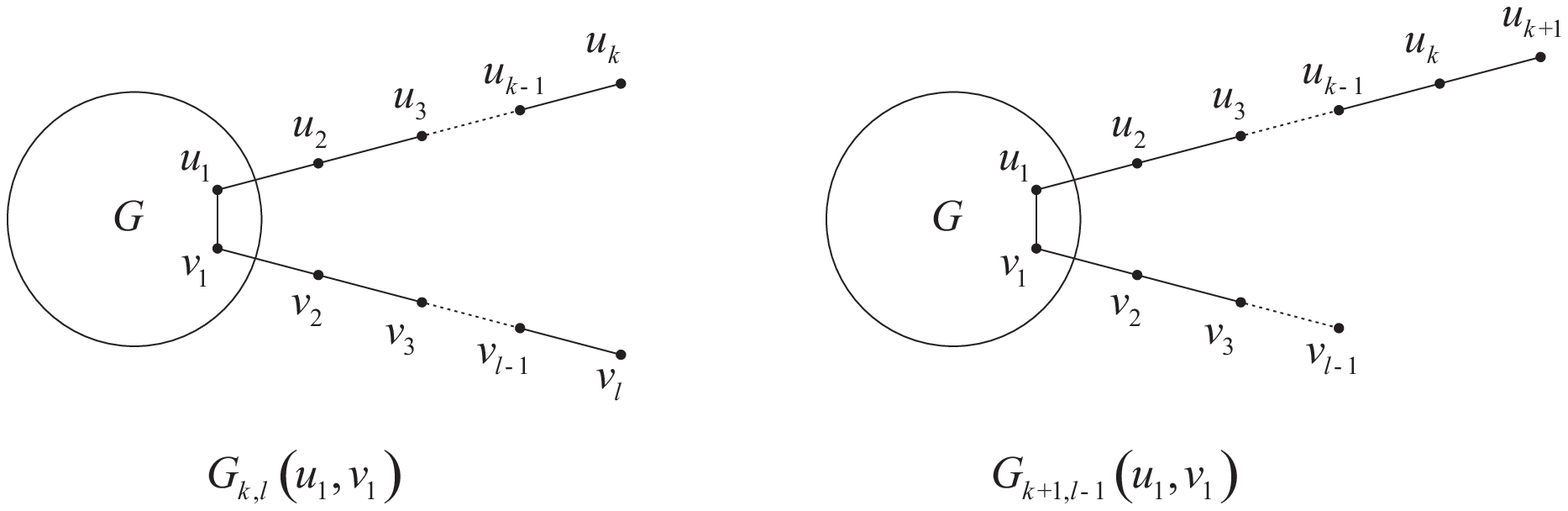}}
\vskip 0.3cm Fig.$1$. Graphs $G_{k,l}(u_{1},v_{1})$ and $G_{k+1,l-1}(u_{1},v_{1})$.
\end{center}

Let $G$ be a connected graph with vertex set $V(G)=\{u_{1},v_{1},w_{1},w_{2},\ldots ,w_{n-k-l}\}$, where $n>k+l$. Suppose that vertex $u_{1}$ is adjacent to vertex $v_{1}$ and $N_{G}(u_{1})=N_{G}(v_{1})$. Adding an edge between $u_{1}$ and an end-vertex of a path $P_{k-1}$, then performing the same operation between $v_{1}$ and an end-vertex of a path $P_{l-1}$, one obtains a connected graph, denote by $G_{k,l}(u_{1},v_{1})$.

\begin{theorem}\label{th16}
Let $G_{k,l}(u_{1},v_{1})$ be the graph defined above. If $k\geq l\geq 2$, then
$$\partial^L_1(G_{k+1,l-1}(u_{1},v_{1}))\geq \partial^L_1(G_{k,l}(u_{1},v_{1})).$$
Moreover, if the equality holds, then there exists a unit eigenvector corresponding to $\partial^L_1(G_{k,l})$ taking the same value on vertices of $V(G)$.
\end{theorem}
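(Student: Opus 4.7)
The plan is to apply the Rayleigh-quotient characterization
$$\partial^L_1(H) \;=\; \max_{Z \neq 0} \frac{Z^t \mathcal{L}(H) Z}{Z^t Z} \;=\; \max_{Z \neq 0}\frac{\sum_{\{a,b\}} d_H(a,b)(z_a - z_b)^2}{Z^t Z},$$
together with a natural bijection that transfers a Perron eigenvector of $\mathcal{L}(G_{k,l})$ to a trial vector on $G_{k+1,l-1}$. Concretely, let $X$ be a unit eigenvector of $\mathcal{L}(G_{k,l})$ associated with $\partial^L_1(G_{k,l})$. Define $\phi: V(G_{k,l}) \to V(G_{k+1,l-1})$ by $\phi(v_l) = u_{k+1}$ and $\phi(z) = z$ otherwise, and set $y_{\phi(z)} = x_z$, so $Y^t Y = 1$. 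By Rayleigh, $\partial^L_1(G_{k+1,l-1}) \geq Y^t \mathcal{L}(G_{k+1,l-1}) Y$, and hence the theorem reduces to showing
$$\Delta \;:=\; Y^t \mathcal{L}(G_{k+1,l-1}) Y - X^t \mathcal{L}(G_{k,l}) X \;\geq\; 0.$$

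Since $v_l$ (resp.\ $u_{k+1}$) is a leaf on a pendant path, no shortest path between two other vertices uses it, so distances between unchanged pairs coincide in the two graphs. Every pair-contribution to $\Delta$ thus cancels except those involving $v_l$ (identified with $u_{k+1}$). A direct computation — exploiting the twin identity $d_G(u_1,w) = d_G(v_1,w)$ for every $w \in V(G)$ — gives
$$\Delta \;=\; \sum_{b\neq v_l} c_b\,(x_{v_l} - x_b)^2,$$
with $c_{u_1} = k-l$, $c_{u_i} = k-l+2-2i$ for $i\geq 2$, $c_{v_1} = k-l+2$, $c_{v_j} = k-l+2j$ for $j\geq 2$, and $c_w = k-l+1$ for every $w \in V(G)\setminus\{u_1,v_1\}$. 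Under $k \geq l$, the coefficients on the $v$-side and on the base vertices are all positive, but the $u$-side coefficients $c_{u_i}$ turn negative once $i > (k-l+2)/2$, and indeed their total $\sum_{i=1}^{k} c_{u_i} = k(1-l)$ is nonpositive.

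The main obstacle is therefore to absorb the negative $u$-side contributions by the positive ones; no term-by-term estimate can work, since the sign of $c_{u_i}$ changes along the path. My plan is to use the eigenvalue equation $\mathcal{L}(G_{k,l})X = \partial^L_1(G_{k,l})X$ to telescope each difference $x_{v_l} - x_{u_i}$ along the merged path $v_l\,v_{l-1}\cdots v_1\,u_1\,u_2\cdots u_i$, apply Cauchy--Schwarz to bound $(x_{v_l} - x_{u_i})^2$ by the path length times a sum of consecutive squared differences, and then pair each negative $u$-side term with the matching positive contribution coming from the mirror $v$-side index $j$ and from the base vertices. The twin symmetry of $u_1, v_1$ in $G$ is what permits these pairings to balance, yielding $\Delta \geq 0$.

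For the equality discussion, $\Delta = 0$ forces $Y$ to be an eigenvector of $\mathcal{L}(G_{k+1,l-1})$ for the same eigenvalue, and traces all preceding inequalities to equalities: every Cauchy--Schwarz step saturates, which forces consecutive differences of $X$ along the path to align with the telescoping coefficients, and the pairings with the base vertices force $x_{u_1} = x_{v_1} = x_{w_i}$ for every $i$. Thus the eigenvector can be chosen to take a common value on all of $V(G) = \{u_1,v_1,w_1,\dots,w_{n-k-l}\}$, exactly as asserted.
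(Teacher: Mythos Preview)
Your bijection $\phi(v_l)=u_{k+1}$ and the resulting expression
$\Delta=\sum_{b\neq v_l} c_b\,(x_{v_l}-x_b)^2$ with the listed coefficients are correctly derived, and you have identified the real obstruction: the $u$--side coefficients $c_{u_i}=k-l+2-2i$ eventually become negative and their sum is $k(1-l)\le 0$. The difficulty is that from this point on your argument is only a sketch: ``telescope $(x_{v_l}-x_{u_i})$ along the merged path, apply Cauchy--Schwarz, then pair with mirror $v$--side and base terms''. Cauchy--Schwarz produces an \emph{upper} bound for each $(x_{v_l}-x_{u_i})^2$; multiplying by a negative $c_{u_i}$ this does give a lower bound on those individual terms, but the positive terms $(x_{v_l}-x_{v_j})^2$ and $(x_{v_l}-x_w)^2$ do not receive any usable lower bound from the same device, and since the negative $u$--side total dominates in magnitude there is no way to close the gap term by term. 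You also invoke ``the twin symmetry of $u_1,v_1$'', but in $G_{k,l}$ with $k>l$ the vertices $u_1,v_1$ are \emph{not} twins (different pendant paths), so the eigenvector has no usable symmetry at that pair. In short, the crucial inequality $\Delta\ge 0$ is asserted by plan but not proved, and I do not see how your outlined mechanism establishes it.

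By contrast the paper does not move a single leaf but \emph{shifts} the entire labelling: it sets $z_{u_1}=x_{v_1}$, $z_{u_i}=x_{u_{i-1}}$, $z_{v_i}=x_{v_{i+1}}$, $z_{w_i}=x_{w_i}$. Under this relabelling all path--to--path distances are preserved, so the comparison collapses to the much simpler quantity
\[
\mathbf\Sigma=\sum_{j=1}^{k}\sum_i (x_{u_j}-x_{w_i})^2-\sum_{j=2}^{l}\sum_i (x_{v_j}-x_{w_i})^2,
\]
which involves only path--to--base differences. The paper then treats two cases. For $k=l$ it symmetrises the eigenvector (averaging $X$ with its $u\!\leftrightarrow\! v$ swap, which \emph{is} a graph automorphism when $k=l$) to force $x_{u_i}=x_{v_i}$, after which $\mathbf\Sigma\ge 0$ is immediate. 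For $k>l$ it constructs a reflected vector $X^*$ (reversing a block of the $u$--path and swapping it with the $v$--path) and uses $X^t\mathcal L X\ge (X^*)^t\mathcal L X^*$ to bound $(k-l)\mathbf\Sigma$ from below by $0$. The equality analysis then reads off $x_{u_1}=x_{v_1}=x_{w_i}$ directly from the vanishing of the leading summands. Your single--leaf bijection loses this path--to--base reduction, which is precisely what makes the paper's reflection and symmetrisation arguments work.
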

\begin{proof}
For simplicity, we may take $\partial=\partial^L_1(G_{k,l}(u_{1},v_{1}))$ and $\partial^{\prime}=\partial^L_1(G_{k+1,l-1}(u_{1},v_{1}))$. Let $$X=(x_{w_{1}},\ldots,x_{w_{n-k-l}},x_{u_{1}},\ldots,x_{u_{k}},x_{v_{1}},\ldots,x_{v_{l}})^t$$ be a unit eigenvector corresponding to $\partial$. Constructing a new vector $$Z=(z_{w_{1}},\ldots,z_{w_{n-k-l}},z_{u_{1}},\ldots,z_{u_{k+1}},z_{v_{1}},\ldots,z_{v_{l-1}})^t$$ on the vertices of $G_{k+1,l-1}(u_{1},v_{1})$ such that
\[
\left\{
\begin{array}{l}
z_{u_{1}}=x_{v_{1}},\\
z_{u_{i}}=x_{u_{i-1}}\hspace*{12pt}\text{for}\hspace*{12pt}2\leq i\leq k+1,\\
z_{v_{i}}=x_{v_{i+1}}\hspace*{13pt}\text{for}\hspace*{12pt}1\leq i\leq l-1,\\
z_{w_{i}}=x_{w_{i}}\hspace*{18pt}\text{for}\hspace*{12pt}1\leq i\leq n-k-l.
\end{array}
\right.
\]
Obviously, $Z$ is also a unit vector. Hence
\begin{eqnarray*}
\partial^{\prime}-\partial&\geq& Z^{t}\mathcal{L}(G_{k+1,l-1}(u_{1},v_{1}))Z-X^{t}\mathcal{L}(G_{k,l}(u_{1},v_{1}))X\\
&=&\sum_{j=1}^{k}\sum_{i=1}^{n-k-l}(x_{u_{j}}-x_{w_{i}})^{2}
-\sum_{j=2}^{l}\sum_{i=1}^{n-k-l}(x_{v_{j}}-x_{w_{i}})^{2}.
\end{eqnarray*}
Let $$\mathbf{\Sigma}=\sum_{j=1}^{k}\sum_{i=1}^{n-k-l}(x_{u_{j}}-x_{w_{i}})^{2}
-\sum_{j=2}^{l}\sum_{i=1}^{n-k-l}(x_{v_{j}}-x_{w_{i}})^{2}.$$ So in the following, we only need to show that $\mathbf{\Sigma}\geq 0$.

\noindent{\bf Case 1.} $k\geq l+1$.

Let $X^{*}$ be a vector on the vertices of $G_{k,l}(u_{1},v_{1})$,
such that
\[
\left\{
\begin{array}{l}
x^{*}_{w_{i}}=x_{w_{i}}\hspace*{38pt}\text{for}\hspace*{12pt}1\leq i\leq n-k-l,\\
x^{*}_{v_{i}}=x_{u_{k-l+i}}\hspace*{23pt}\text{for}\hspace*{12pt}1\leq i\leq l,\\
x^{*}_{u_{i}}=x_{u_{k-l-i+1}}\hspace*{14pt}\text{for}\hspace*{12pt}1\leq i\leq k-l,\\
x^{*}_{u_{i}}=x_{v_{i-k+l}}\hspace*{23pt}\text{for}\hspace*{12pt}k-l+1\leq i\leq k.
\end{array}
\right.
\]
It is clear that $X^{*}$ is also a unit vector, then we obtain
\begin{eqnarray*}
&&X^{t}\mathcal{L}(G_{k,l}(u_{1},v_{1}))X-{X^{*}}^{t} \mathcal{L}(G_{k,l}(u_{1},v_{1}))X^{*}\\
&=&\sum_{j=k-l+1}^{k}\sum_{i=1}^{n-k-l}(k-l)(x_{u_{j}}-x_{w_{i}})^{2}
+\sum_{j=1}^{k-l}\sum_{i=1}^{n-k-l}(2j-k+l-1)(x_{u_{j}}-x_{w_{i}})^{2}\\
&&~~~~~~~~~~~~~~~~~~~~~~~~~~~~~~~-\sum_{j=1}^{l}\sum_{i=1}^{n-k-l}(k-l)(x_{v_{j}}-x_{w_{i}})\\
&\geq&0.
\end{eqnarray*}
Since $k-l> 2j-k+l-1$ for $1\leq j\leq k-l$, it follows that
\begin{eqnarray*}
&&(k-l)\mathbf{\Sigma}\\
&=&\left(k-l\right)\left(\sum_{j=1}^{k}\sum_{i=1}^{n-k-l}(x_{u_{j}}-x_{w_{i}})^{2}
-\sum_{j=2}^{l}\sum_{i=1}^{n-k-l}(x_{v_{j}}-x_{w_{i}})^{2}\right)\\
&\geq&(k-l)\left(\sum_{j=1}^{k}\sum_{i=1}^{n-k-l}(x_{u_{j}}-x_{w_{i}})^{2}
-\sum_{j=1}^{l}\sum_{i=1}^{n-k-l}(x_{v_{j}}-x_{w_{i}})^{2}\right)\\
&\geq&\sum_{j=k-l+1}^{k}\sum_{i=1}^{n-k-l}(k-l)(x_{u_{j}}-x_{w_{i}})^{2}
+\sum_{j=1}^{k-l}\sum_{i=1}^{n-k-l}(2j-k+l-1)(x_{u_{j}}-x_{w_{i}})^{2}\\
&&-\sum_{j=1}^{l}\sum_{i=1}^{n-k-l}(k-l)(x_{v_{j}}-x_{w_{i}})\\
&\geq&0.
\end{eqnarray*}
This implies $\mathbf{\Sigma}\geq 0$, therefore $\partial^{\prime}\geq \partial$. Further, if $\partial^{\prime}=\partial$, then $(k-l)\mathbf{\Sigma}=0$. According to the above inequality, it can easily be seen that $\sum\limits_{i=1}^{n-k-l}(x_{u_{1}}-x_{w_{i}})^{2}=0$ and $\sum\limits_{i=1}^{n-k-l}(x_{v_{1}}-x_{w_{i}})^{2}=0$. Consequently, $x_{u_{1}}=x_{v_{1}}=x_{w_{i}}$ for $1\leq i\leq n-k-l$. Hence the result holds.

\noindent{\bf Case 2.} $k=l\geq 1$.

Let $Y=(y_{w_{1}},\ldots,y_{w_{n-k-l}},y_{u_{1}},\ldots,y_{u_{k}},y_{v_{1}},\ldots,y_{v_{k}})^t$ be a unit eigenvector corresponding to $\partial$. Giving a vector
$Y^{'}=(y_{w_{1}},\ldots,y_{w_{n-k-l}},y_{v_{1}},\ldots,y_{v_{k}},y_{u_{1}},\ldots,y_{u_{k}})^t$, which is obtained from $Y$ by exchanging the values on $v_{i}$ and $u_{i}$ for $1\leq i\leq k$. It is clear that $Y^{'}$ is also a unit eigenvector corresponding to $\partial$. Let $Y^{''}=Y+Y^{'}$. We divide our proof into two subcases.

\noindent{\bf Subcase 2.1.} $Y^{''}\neq \mathbf{0}$.

If $Y^{''}\neq \mathbf{0}$, then we take $X=\alpha Y^{''}$ where $\alpha$ is a constant such that $X$ is a unit vector. Clearly, $X$ is a unit eigenvector corresponding to $\mu$ and $x_{u_{i}}=x_{v_{i}}$ for $1\leq i\leq k$. Then we have $\mathbf{\Sigma}=\sum\limits_{i=1}^{n-2k}(x_{u_{1}}-x_{w_{i}})^{2}\geq 0$, and so $\partial^{\prime}\geq \partial$. If equality holds, then $\mathbf{\Sigma}=\sum\limits_{i=1}^{n-2k}(x_{u_{1}}-x_{w_{i}})^{2}=0$, so $x_{w_{i}}=x_{u_{1}}$. Hence the result holds.

\noindent{\bf Subcase 2.2.} $Y^{''}=\mathbf{0}$.

If $Y^{''}=\mathbf{0}$, then we have $Y=(0,\ldots,0,y_{u_{1}},\ldots,y_{u_{k}},-y_{u_{1}},\ldots,-y_{u_{k}})^t$. Let $X=\alpha Y$ be a unit vector where $\alpha$ is a constant. Thus $X$ is a unit eigenvector corresponding to $\mu$. Note that $x_{w_{i}}=0$ for $1\leq i\leq n-2k$ and $x_{v_{i}}=-x_{u_{i}}$ for $1\leq i\leq k$ in this case. Therefore $\mathbf{\Sigma}=(n-2k)x_{u_{1}}^{2}\geq 0$ and equality holds if and only if $x_{u_{1}}=0$. It is evident to see that the theorem holds. This completes the proof. \hspace*{\fill}$\Box$
\end{proof}

\begin{corollary}\label{co17}
Let $G_{k,2}(u_{1},v_{1})$ be the graph defined above. If $k\geq 2$, then $$\partial_{1}^{L}(G_{k+1,1}(u_{1},v_{1}))> \partial_{1}^{L}(G_{k,2}(u_{1},v_{1})).$$
\end{corollary}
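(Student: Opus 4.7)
The plan is to argue by contradiction: suppose $\partial_{1}^{L}(G_{k+1,1}(u_{1},v_{1}))=\partial_{1}^{L}(G_{k,2}(u_{1},v_{1}))$, and call this common value $\partial$. By the equality clause of Theorem~\ref{th16}, there is a unit eigenvector $X$ of $\mathcal{L}(G_{k,2})$ for $\partial$ whose entries on $V(G)=\{u_{1},v_{1},w_{1},\ldots,w_{n-k-2}\}$ are all equal; call this value $c$, and set $y_{j}:=x_{u_{j}}-c$ for $2\le j\le k$ and $z:=x_{v_{2}}-c$.

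Recycling the construction from the proof of Theorem~\ref{th16}, define the unit vector $Z$ on $G_{k+1,1}$ by $z_{u_{1}}=x_{v_{1}}$, $z_{u_{i}}=x_{u_{i-1}}$ for $2\le i\le k+1$, $z_{v_{1}}=x_{v_{2}}$, and $z_{w_{j}}=x_{w_{j}}$. The difference $\mathbf{\Sigma}$ computed there specializes, using $x_{u_{1}}=x_{w_{i}}=c$, to
\begin{equation*}
Z^{t}\mathcal{L}(G_{k+1,1})Z - X^{t}\mathcal{L}(G_{k,2})X \;=\; (n-k-2)\left[\,\sum_{j=2}^{k}y_{j}^{2} - z^{2}\,\right].
\end{equation*}
Since $\partial=\partial_{1}^{L}(G_{k+1,1})\ge Z^{t}\mathcal{L}(G_{k+1,1})Z$ while Theorem~\ref{th16} guarantees the right-hand side is non-negative, the bracket must vanish. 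Because $n>k+l=k+2$, this forces $\sum_{j=2}^{k}y_{j}^{2}=z^{2}$; simultaneously $Z^{t}\mathcal{L}(G_{k+1,1})Z=\partial$, so $Z$ is itself a unit eigenvector of $\mathcal{L}(G_{k+1,1})$ for $\partial$.

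The crux is to extract a second relation by comparing two eigenvalue equations that both evaluate $\partial(c+z)$: namely $\mathcal{L}(G_{k,2})X=\partial X$ read off at coordinate $v_{2}$, and $\mathcal{L}(G_{k+1,1})Z=\partial Z$ read off at coordinate $v_{1}$ (note $x_{v_{2}}=z_{v_{1}}=c+z$). The identity $d(w_{j},u_{1})=d(w_{j},v_{1})=:d_{j}$, inherited from $N_{G}(u_{1})=N_{G}(v_{1})$ and preserved in both $G_{k,2}$ and $G_{k+1,1}$ since the pendant paths create no shortcut to any $w$-vertex, lets me compare the two right-hand sides term by term. All $y_{i}$-sums, the $c$-multiples of $Q:=\sum_{i=2}^{k}(i+1)$, and the $\partial c$ constants cancel pairwise; the only surviving discrepancy is the bulk contribution of the $w$-vertices, because each $w_{j}$ sits at distance $d_{j}+1$ from $v_{2}$ in $G_{k,2}$ but at distance $d_{j}$ from $v_{1}$ in $G_{k+1,1}$. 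Subtracting therefore yields $(n-k-2)z=0$, so $z=0$; combined with $\sum_{j=2}^{k}y_{j}^{2}=z^{2}$ this gives $y_{j}=0$ for all $j$, hence $X=c\mathds{1}$. Orthogonality $X\perp\mathds{1}$ (valid since $\partial>0$) then forces $c=0$, contradicting $\|X\|=1$. The main technical obstacle is this distance bookkeeping: one must track which distances shift and which do not, and verify that every shared term aligns so that the clean $(n-k-2)z$ remainder genuinely emerges.
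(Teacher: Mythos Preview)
Your argument is correct, and the distance bookkeeping you worried about does work out cleanly: since $D_{v_{2}}^{G_{k,2}}-D_{v_{1}}^{G_{k+1,1}}=n-k-2$ (every vertex other than $v_{2}$ lies exactly one step farther from $v_{2}$ in $G_{k,2}$ than from $v_{1}$ in $G_{k+1,1}$, with the sole exception of the $u$-path and $u_{1},v_{1}$ themselves, whose contributions match up after the index shift), subtracting the two eigen-equations indeed leaves $(n-k-2)z=0$.

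The paper, however, argues quite differently after invoking the equality clause of Theorem~\ref{th16}. It never leaves $G_{k,2}$: it subtracts the eigen-equations at $u_{1}$ and $v_{1}$ to obtain $(n-2)x_{v_{1}}+2x_{v_{2}}=0$, and those at $v_{1}$ and $v_{2}$ to obtain $(\partial-D_{v_{1}})x_{v_{1}}=(\partial-D_{v_{2}}-2)x_{v_{2}}$; then it appeals to Theorem~\ref{th9} (that $\partial>D_{1}+2$ when $\mathrm{diam}\ge 3$) to force a sign conflict, giving $x_{v_{1}}=x_{v_{2}}=0$. From there it peels off the $x_{u_{i}}$ one by one via a short recursion on the eigen-equations along the pendant path. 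Your route avoids both the appeal to Theorem~\ref{th9} and the recursion: by recognising that $\mathbf{\Sigma}=0$ makes the transferred vector $Z$ an eigenvector of $G_{k+1,1}$, you get the extra equation ``for free'' and then the single identity $\sum y_{j}^{2}=z^{2}$ kills everything at once. The trade-off is that your proof leans more heavily on the internals of the proof of Theorem~\ref{th16} (the exact form of $\mathbf{\Sigma}$ and the fact that $\mathbf{\Sigma}\ge 0$ for the specific eigenvector furnished by the equality clause), whereas the paper's argument is more self-contained once the constant-on-$V(G)$ eigenvector is in hand.
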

\begin{proof}
We prove this corollary by contradiction. Suppose that $\partial_{1}^{L}(G_{k+1,1}(u_{1},v_{1}))=\partial_{1}^{L}(G_{k,2}(u_{1},v_{1})).$ We may take $\partial=\partial_{1}^{L}(G_{k,2}(u_{1},v_{1}))$. It follows from Theorem \ref{th16} that there exists a unit eigenvector $X$ corresponding to $\partial$ such that $x_{w_{i}}=x_{u_{1}}=x_{v_{1}}$ for $1\leq i\leq n-k-2$. By eigenvalue equation, we have
\begin{equation}
\partial x_{u_{1}}=D_{u_{1}}x_{u_{1}}-\sum_{i=1}^{n-k-2}d(u_{1},w_{i})x_{w_{i}}-x_{v_{1}}-2x_{v_{2}}-\sum_{i=2}^{k}(i-1)x_{u_{i}} \label{5.1}
\end{equation}
and
\begin{equation}
\partial x_{v_{1}}=D_{v_{1}}x_{v_{1}}-\sum_{i=1}^{n-k-2}d(v_{1},w_{i})x_{w_{i}}-x_{u_{1}}-x_{v_{2}}-\sum_{i=2}^{k}ix_{u_{i}}. \label{5.2}
\end{equation}
Note that $D_{v_{1}}=D_{u_{1}}+k-2$ and $X\bot \mathds{1}$. Taking the difference between equations (1) and (2), we have
\begin{equation}
(n-2)x_{v_{1}}+2x_{v_{2}}=0.\label{5.3}
\end{equation}
Similarly
\begin{eqnarray*}
\partial x_{v_{1}}-\partial x_{v_{2}}&=&D_{v_{1}}x_{v_{1}}-D_{v_{2}}x_{v_{2}}+x_{v_{1}}-x_{v_{2}}+
\sum_{i=1}^{n-k-2}x_{w_{i}}+\sum_{i=1}^{k}x_{u_{i}}\\
&=&D_{v_{1}}x_{v_{1}}-(D_{v_{2}}+2)x_{v_{2}}.
\end{eqnarray*}
Then
\begin{equation}
(\partial-D_{v_{1}})x_{v_{1}}=(\partial-D_{v_{2}}-2)x_{v_{2}}.\label{5.4}
\end{equation}
Since $\partial>D_{v_{2}}+2$ (by Theorem \ref{th9}), we have
$\left\{\begin{array}{l}
x_{v_{1}}x_{v_{2}}\leq 0\\
x_{v_{1}}x_{v_{2}}\geq 0
\end{array}\right.
$ from equations (\ref{5.3}) and (\ref{5.4}). Therefore, $x_{v_{1}}=x_{v_{2}}=0.$ These two identities $\sum\limits_{i=2}^{k}x_{u_{i}}=0$ and $\sum\limits_{i=2}^{k}(i-1)x_{u_{i}}=0$ will be implied through equations (\ref{5.1}) and (\ref{5.2}). It follows that
$$\partial x_{u_{2}}=D_{u_{2}}x_{u_{2}}-\sum_{i=3}^{k}(i-2)x_{u_{i}}=D_{u_{2}}x_{u_{2}}.$$
The relation $\partial>D_{u_{2}}$ can be obtained from Theorem \ref{th9}, further we figure out $x_{u_{2}}=0$. Analogously, these parameters admit $x_{u_{3}}=\cdots =x_{u_{k}}=0$. This implies $X=\mathbf{0}$, a contradiction. Thus we complete the proof.\hspace*{\fill}$\Box$
\end{proof}

Let $u$ be a vertex of a connected graph $G$. Denote by $G_{k,l}(u)$ the graph obtained from $G\cup P_{k-1}\cup P_{l-1}$ by adding two edges between $u$ and end-vertices of $P_{k-1}$ and $P_{l-1}$. An argument similar to the one used in Theorem \ref{th16} and Corollary \ref{co17}, we obtain the following result.

\begin{theorem}\label{th18}
Let $G_{k,l}(u)$ be the graph defined above. If $k\geq l\geq 2$, then $\partial_{1}^{L}(G_{k+1,l-1}(u))\geq \partial_{1}^{L}(G_{k,l}(u)).$
Moreover, if $l=2$, then $\partial_{1}^{L}(G_{k+1,1}(u))>\partial_{1}^{L}(G_{k,2}(u)).$
\end{theorem}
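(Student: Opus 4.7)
I plan to follow the template of Theorem~\ref{th16} and Corollary~\ref{co17}, replacing the twin pair $\{u_1,v_1\}$ by the single attachment vertex $u$. Let $X$ be a unit eigenvector of $\mathcal{L}(G_{k,l}(u))$ for $\partial := \partial_1^L(G_{k,l}(u))$ and write $u_1:=u$. I would define a unit vector $Z$ on $V(G_{k+1,l-1}(u))$ by shifting coordinates one step along the combined path $u_k,u_{k-1},\ldots,u_2,u,v_2,\ldots,v_l$: put $z_w=x_w$ for $w\in W:=V(G)\setminus\{u\}$, $z_u=x_{v_2}$, $z_{u_j}=x_{u_{j-1}}$ for $2\leq j\leq k+1$, and $z_{v_j}=x_{v_{j+1}}$ for $2\leq j\leq l-1$. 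Both combined paths have $k+l-1$ vertices (with $u$ at position $k$ in $G_{k,l}(u)$ and at position $k+1$ in $G_{k+1,l-1}(u)$), so the shift is a position-preserving bijection between them. Pair contributions from within either combined path and from within $W$ therefore cancel in $Z^t\mathcal{L}(G_{k+1,l-1}(u))Z - X^t\mathcal{L}(G_{k,l}(u))X$, and the remaining cross-terms with $w\in W$ (whose distance to a combined-path vertex increases by $+1$ on the $u$-side of $u$ and by $-1$ on the $v$-side) simplify to
$$\mathbf{\Sigma}:=Z^t\mathcal{L}(G_{k+1,l-1}(u))Z - X^t\mathcal{L}(G_{k,l}(u))X = \sum_{w\in W}\Bigl[\sum_{j=1}^{k}(x_{u_j}-x_w)^2 - \sum_{j=2}^{l}(x_{v_j}-x_w)^2\Bigr].$$
By Rayleigh, $\partial_1^L(G_{k+1,l-1}(u))\geq\partial+\mathbf{\Sigma}$, so it suffices to show $\mathbf{\Sigma}\geq 0$.

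For $k>l$, I would reflect the combined path: define $X^*$ to have at position $p$ the value of $X$ at position $k+l-p$. Since $X^*$ is a unit vector, $\Psi:=X^t\mathcal{L}X-(X^*)^t\mathcal{L}X^*\geq 0$; within-path pairs cancel and the cross-term with $w$ contributes coefficient $|p-k|-|p-l|$ to $(y_p-x_w)^2$, which equals $k-l$ on positions $1,\ldots,l$, equals $k+l-2p$ on positions $l+1,\ldots,k-1$, and equals $-(k-l)$ on positions $k,\ldots,k+l-1$. Since $k-l\geq 2j-k+l-2$ for $2\leq j\leq k-l$, a term-by-term comparison gives $(k-l)\mathbf{\Sigma}-\Psi = 2(k-l)\sum_w(x_u-x_w)^2 + 2\sum_w\sum_{j=2}^{k-l}(k-l-j+1)(x_{u_j}-x_w)^2\geq 0$, whence $(k-l)\mathbf{\Sigma}\geq\Psi\geq 0$ and $\mathbf{\Sigma}\geq 0$. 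For $k=l$, the graph $G_{k,k}(u)$ admits the involution $\sigma$ swapping $u_j\leftrightarrow v_j$ for $2\leq j\leq k$ while fixing $V(G)$. Given a unit eigenvector $Y$ and its image $Y'$, either $Y+Y'\neq\mathbf{0}$ (rescale to a unit $X$ with $x_{u_j}=x_{v_j}$, yielding $\mathbf{\Sigma}=\sum_w(x_u-x_w)^2\geq 0$) or $Y+Y'=\mathbf{0}$ (giving $x_u=0=x_w$ on $W$, $x_{v_j}=-x_{u_j}$, and $\mathbf{\Sigma}=0$ directly).

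For the strict inequality when $l=2$, I would argue by contradiction along the lines of Corollary~\ref{co17}. Assuming $\partial_1^L(G_{k+1,1}(u))=\partial_1^L(G_{k,2}(u))=\partial$, the vector $Z$ must itself be an eigenvector of $\mathcal{L}(G_{k+1,1}(u))$ for $\partial$, and $\mathbf{\Sigma}=0$. The equality analysis of the reflection step then forces $x_u=x_w$ for every $w\in W$ (and, when $k\geq 4$, also $x_u=x_{u_j}$ for $j=2,\ldots,k-2$). Substituting these identities into the eigenvalue equations $\partial x_a = D_a x_a - \sum_{b\neq a}d(a,b)x_b$ at $u$ and $v_2$ and subtracting, and using $X\perp\mathds{1}$ together with the strict bound $\partial > D_a+2$ supplied by Theorem~\ref{th9}, I would first deduce $x_{v_2}=0$; iterating the same scheme at $u_2,u_3,\ldots,u_k$ would then force all $x_{u_j}=0$ and $x_u=0$, contradicting $\|X\|=1$. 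The main obstacle is the distance bookkeeping producing the clean formulas for $\mathbf{\Sigma}$ and $\Psi$ and the inductive propagation of vanishing coordinates in the strict-inequality argument; the coefficient comparison $(k-l)\mathbf{\Sigma}\geq\Psi$ and the symmetrization for $k=l$ then proceed exactly as in Theorem~\ref{th16}.
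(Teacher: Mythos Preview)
Your proposal is correct and follows exactly the approach the paper indicates: the paper gives no detailed proof of Theorem~\ref{th18} but simply says ``An argument similar to the one used in Theorem~\ref{th16} and Corollary~\ref{co17}'', and your sketch is precisely that adaptation---the shifted test vector $Z$, the resulting expression $\mathbf{\Sigma}=\sum_{w\in W}[\sum_{j=1}^{k}(x_{u_j}-x_w)^2-\sum_{j=2}^{l}(x_{v_j}-x_w)^2]$, the reflection comparison $(k-l)\mathbf{\Sigma}\geq\Psi\geq 0$ for $k>l$, and the $u_j\leftrightarrow v_j$ symmetrization for $k=l$ all go through as in Theorem~\ref{th16}, and the eigen-equation contradiction for $l=2$ mirrors Corollary~\ref{co17}. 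One small caution: your appeal to the strict bound $\partial>D_1+2$ from Theorem~\ref{th9} requires $\mathrm{diam}(G_{k,2}(u))\geq 3$, which is automatic for $k\geq 3$ but can fail when $k=l=2$; in that boundary case you should instead (as you hint) exploit that $Z$ is an eigenvector of $\mathcal{L}(G_{3,1}(u))$, or compare the eigen-equations at $u$ and at some $w\in W$ to force $x_u=0$ directly.
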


Using Theorem \ref{th18}, we immediately get the path $P_n$ attains the maximum distance spectral radius among all graphs which is a conjecture from Aouchiche and Hensen \cite{AM1,AH},
was solved by da Silva Jr. and  Nikiforov \cite{SJ}.

\vspace{4mm}

\begin{center}
\scalebox{0.6}[0.6]
{\includegraphics{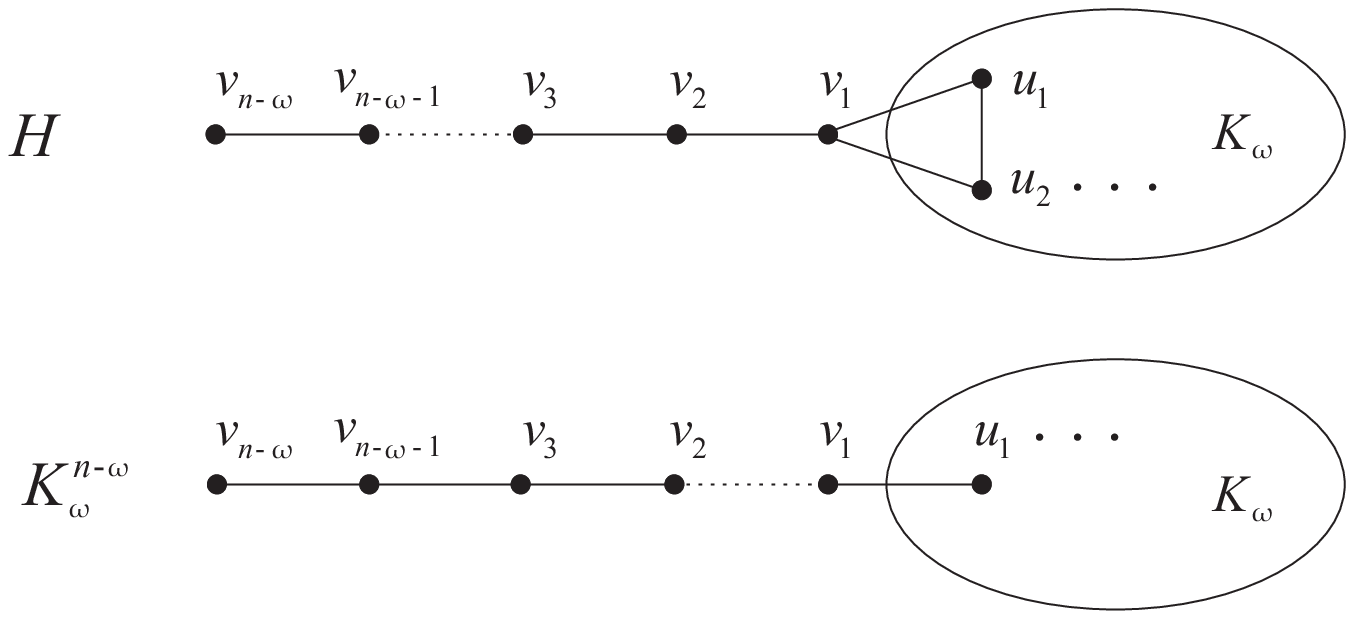}}
\vskip 0.3cm Fig.$2$. Graphs $K_{\omega}^{n-\omega}$ and $H$.
\end{center}

Now, we are ready to give the proof of the Theorem 5.2.

\noindent{\bf Proof of Theorem 5.2.}
Recall that $\partial_{1}^{L}(G)\leq \partial_{1}^{L}(P_{n})$ with equality holds if and only if $G\cong P_{n}$(see \cite{SJ}). According to this conclusion, it is evident to see that the theorem holds when $\omega=2$. So in the following, we may assume that $\omega\geq 3$. Suppose that $G$ is an extremal graph which attains the maximum distance Laplacian spectral radius. It is suffices to show that $G\cong K_{\omega}^{n-\omega}$. Let $S$ be an $\omega$-clique of $G$.

\noindent{\bf Claim 1.} There is exactly one component in $G-S$.

Otherwise, suppose that there are $t~(\geq 2)$ components $G_{1},G_{2},\ldots, G_{t}$ in $G-S$. For $i=1$ and 2, by deleting edges in $E_{G}[S,V(G_{i})]$ (if necessary), we get a graph $G'$ from $G$ such that $E_{G'}[S,V(G_{i})]$ is a cut edge in $G'$. With the monotonicity of distance Laplacian spectral radius when removing edges, it admits $\partial_{1}^{L}(G')\geq\partial_{1}^{L}(G)$. Let $E_{G'}[S,V(G_{1})]=e_{1}$ and $E_{G'}[S,V(G_{2})]=e_{2}$. Suppose that $e_{1}$ and $e_{2}$ are incident with a same vertex in $V(S)$. Combining the monotonicity of distance Laplacian spectral radius and Theorem \ref{th18}, it is easy to verify that there is a graph $G''$ such that $\partial_{1}^{L}(G'')>\partial_{1}^{L}(G')$, a contradiction. Suppose that $e_{1}$ and $e_{2}$ are incident with two different vertices. Analogously, we can also get a contradiction in accordance with Theorem \ref{th16} and Corollary \ref{co17}. Thus claim holds.

\noindent{\bf Claim 2.} There is exactly one vertex of $V(G)\backslash S$, which is adjacent to vertices in $S$.

Suppose that there are two vertices $v_{1},v_{2}\in V(G)\backslash S$ adjacent to vertices in $S$. Let $E^{*}$ be a minimal edge cut separating $v_{1}$ and $v_{2}$ in $G-S$, and let $G'=G-E^{*}$. Then $\partial_{1}^{L}(G')\geq \partial_{1}^{L}(G)$. However, there are two components in $G'-S$, this contradicts claim 1. Thus claim holds.

\noindent{\bf Claim 3.} $G-S$ is a path.

We first assume that there is a vertex $v\in V(G)\backslash S$ with $d_{G-S}(v)\geq 3$. Let $G'$ be a graph obtained from $G$ by deleting edges in $G-S$ such that $G'-S$ is a tree and the degree of $v$ is unchanged. There are at least three components in $G'-S-v$. Then by an argument similar to the proof of claim 1, we can find a graph $G''$ such that $\partial_{1}^{L}(G'')>\partial_{1}^{L}(G')\geq \partial_{1}^{L}(G)$, a contradiction. Thus we infer that $G-S$ is a cycle or a path. Suppose that $G-S$ is a cycle. Denote by $G-S=v_{1}v_{2}\ldots v_{n-\omega}v_{1}$. Let $G'''$ be a graph obtained from $G$ by deleting some edges of $E_{G}[S,V(G)\backslash S]$ such that $E_{G'''}[S,V(G)\backslash S]$ is a cut edge of $G'''$. Without loss of generality, we may assume $v_{1}$ is an end-vertex of this cut edge. According to Theorem \ref{th18}, we have $\partial_{1}^{L}(K_{\omega}^{n-\omega})>\partial_{1}^{L}(G'''-v_{2}v_{3})\geq \partial_{1}^{L}(G)$, a contradiction. Therefore the claim holds.

Let $G-S=v_{1}v_{2}\ldots v_{n-\omega}$ be a path. By claim 2, there is exactly one vertex $v\in \{v_{1},v_{2},\ldots, v_{n-\omega}\}$ that adjacent to vertices in $S$. If $v\notin \{v_{1},v_{n-\omega}\}$, then it follows from Theorem \ref{th18} that $\partial_{1}^{L}(K_{\omega}^{n-\omega})>\partial_{1}^{L}(G)$, a contradiction. So, we may assume that $v=v_{1}$. If $v_{1}$ is adjacent to exactly one vertex in $S$, then $G\cong K_{\omega}^{n-\omega}$. Otherwise, it is easy to verify that $\partial_{1}^{L}(G)\leq \partial_{1}^{L}(H)$, where $H$ is the graph shown in Fig. 2. Let $X=(x_{u_{1}},x_{u_{2}},\ldots,x_{u_{\omega}},x_{v_{1}},x_{v_{2}},\ldots,x_{v_{n-\omega}})$ be a unit eigenvector corresponding to $\partial_{1}^{L}(H)$. Then $$\partial_{1}^{L}(K_{\omega}^{n-\omega})-\partial_{1}^{L}(H)\geq X^{t}\mathcal{L}(K_{\omega}^{n-\omega})X-X^{t}\mathcal{L}(H)X=\sum_{i=1}^{n-\omega}(x_{u_{2}}-x_{v_{i}})^{2}\geq 0.$$
If $\sum\limits_{i=1}^{n-\omega}(x_{u_{2}}-x_{v_{i}})^{2}=0$, we obtain $x_{u_{2}}=x_{v_{1}}=x_{v_{2}}=\cdots=x_{v_{n-\omega}}$. It is easy to see that $x_{u_{1}}=x_{u_{2}}$ and $x_{u_{i}}=x_{u_{3}}$ for $3\leq i\leq \omega$. By eigenvalue equation, we have
$$\partial_{1}^{L}(H)x_{v_{1}}=(2\omega-2)x_{v_{1}}-x_{u_{1}}-x_{u_{2}}-2(\omega-2)x_{u_{3}}$$
and
$$\partial_{1}^{L}(H)x_{u_{1}}=(\omega-1)x_{v_{1}}-x_{u_{2}}-(\omega-2)x_{u_{3}}.$$
Hence $(\omega-2)x_{u_{1}}=(\omega-2)x_{u_{3}}$, that is, $x_{u_{1}}=x_{u_{3}}$. Therefore $X=\alpha\mathds{1}$, where $\alpha$ is a constant. This leads to a contradiction. Consequently, the inequality $\partial_{1}^{L}(K_{\omega}^{n-\omega})>\partial_{1}^{L}(H)$ holds. This is contrary to the maximality of $\partial_{1}^{L}(G)$. Thus we complete the proof.\hspace*{\fill}$\Box$

\section{The distance signless Laplacian eigenvalues of graphs}
\hspace*{\parindent}In this section, we will give some results about
distance signless Laplacian eigenvalues. Let $G$ be a connected
graph with vertices set $\{v_{1},v_{2},\ldots,v_{n}\}$. Let
$X=(x_{1},x_{2},\ldots,v_{n})^{t}$ be a unit eigenvector of $G$
corresponding to $\partial_1^Q(G)$. Recall that
$$\partial_1^Q(G)=X^{t}\mathcal{Q}(G)X=\sum_{\{v_{i},v_{j}\}\in V(G)}d(v_{i},v_{j})(x_{i}+x_{j})^2.$$ Let $G+e$ be a graph obtained from $G$ by adding an edge. Note
that the distance between each pair of vertices is nonincreasing and one pair is decreasing. It is straightforward to show that $$\partial_1^Q(G)>\partial_1^Q(G+e).$$
Using this inequality, we have $\partial_1^Q(G)\geq\partial_1^Q(K_{n})=2n-2$. However, most of the time, if $G$ is not a complete graph, then the distance signless
Laplacian spectral radius of $G$ is  considerably larger than $2n-2$.  Then we establish a lower bound for $\partial_1^Q(G)$.

\begin{theorem}\label{l2}
Let $G$ be a connected graph with order $n$ and diameter $d\geq 3$. Then
$$\partial_1^Q(G)>2n-4+2d.$$
Further, if the diameter is at least $4$, then we have $\partial_1^Q(G)>\frac{d(n+2)}{2}$.
\end{theorem}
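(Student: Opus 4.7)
The plan is to prove the two estimates in sequence: first the second inequality $\partial_1^Q(G) > d(n+2)/2$ for $d \geq 4$ by a direct Rayleigh-quotient computation with a two-point test vector, and then to derive the first inequality $\partial_1^Q(G) > 2n + 2d - 4$ from it whenever $d \geq 4$, leaving only the case $d = 3$ to be handled separately via a finer quotient-matrix argument.

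For the second inequality, I would fix $u, v \in V(G)$ with $d(u,v) = d$ and take the test vector $x = e_u + e_v \in \mathbb{R}^n$. A direct calculation gives $x^{T} \mathcal{Q}(G) x = D_u + D_v + 2d$ and $\|x\|^2 = 2$, so the Rayleigh quotient yields $\partial_1^Q(G) \geq (D_u + D_v + 2d)/2$. The triangle inequality $d(u,w) + d(v,w) \geq d$ applied to every $w \in V(G)$ (including $w \in \{u,v\}$) gives $D_u + D_v \geq nd$, whence $\partial_1^Q(G) \geq d(n+2)/2$. The inequality is strict because $x$ cannot be an eigenvector of $\mathcal{Q}(G)$: for any $w \neq u,v$ the coordinate $(\mathcal{Q}(G) x)_w = d(u,w) + d(v,w) > 0$ clashes with $x_w = 0$. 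For the first inequality in the range $d \geq 4$, I would just combine this with the algebraic identity $d(n+2)/2 - (2n + 2d - 4) = (d-4)(n-2)/2 \geq 0$.

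The delicate case is $d = 3$, where the two-point vector only yields $\partial_1^Q(G) > 3n/2 + 3$, which is weaker than $2n+2$ once $n \geq 3$. Here I would apply Lemma \ref{le10} to the quotient matrix $R$ associated with the coarse partition $\{u,v\} \cup (V(G)\setminus\{u,v\})$. Writing $A = D_u + D_v$, a short calculation produces
\[
R = \begin{pmatrix} A/2 + 3 & A/2 - 3 \\ (A - 6)/(n-2) & (4W - 3A + 6)/(n-2) \end{pmatrix},
\]
and Lemma \ref{le10} gives $\lambda_1(R) \leq \partial_1^Q(G)$, so it suffices to show $\det\bigl((2n+2)I - R\bigr) < 0$. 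Clearing denominators and feeding in the two structural inputs $A \geq 3n$ (the same triangle bound) and $2W \geq A + n(n-2)$ (which holds because every vertex of a diameter-$\geq 3$ graph has eccentricity $\geq 2$ and hence transmission $\geq n$), the determinant condition reduces to the quadratic inequality $A^2 - (n+10)A - 4n^2 + 22n + 8 > 0$.

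The hardest step will be verifying this inequality uniformly in $A$: at $A = 3n$ the quadratic equals exactly $2(n-2)^2$, which is strictly positive for $n \geq 3$, and its derivative in $A$ is $2A - (n+10)$, which is nonnegative as soon as $A \geq 3n$ and $n \geq 2$, so monotonicity in $A$ propagates the inequality to all admissible values. A side regime $A > 4n-2$ is handled more directly, since there $R_{11} = A/2 + 3 > 2n+2$ and $R_{12}R_{21} = (A-6)^2/(2(n-2)) > 0$ together force $\lambda_1(R) > R_{11} > 2n+2$ without the determinant computation.
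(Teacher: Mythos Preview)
Your argument is correct. For $d\geq 4$ you do exactly what the paper does: the same two-point test vector, the same triangle-inequality bound $D_u+D_v\geq nd$, and the same reduction $(d-4)(n-2)/2\geq 0$ to reach $2n-4+2d$.

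For $d=3$ your approach genuinely diverges from the paper's. The paper first adds edges to push $G$ down to a layered graph $G'$ in which the distance classes $V_0=\{u\}$, $V_1$, $V_2$, $V_3=\{w\}$ form an equitable-type structure, then writes the $4\times 4$ quotient matrix of $\mathcal Q(G')$ with respect to this partition and checks $\det\bigl((2n+2)I-R\bigr)<0$ by a direct cubic computation in $n_1,n_2$. You instead keep $G$ untouched, take the much coarser $2$-block partition $\{u,v\}\cup\bigl(V(G)\setminus\{u,v\}\bigr)$, and let the Wiener index $W$ sit as a free parameter in the $2\times 2$ quotient. The price is that you must then feed in the two structural inequalities $A\geq 3n$ and $2W\geq A+n(n-2)$ (the latter valid because no vertex of a diameter-$3$ graph can be universal), and split off the side regime $A>4n-2$ where the $W$-substitution goes the wrong way. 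What you gain is a smaller matrix and no edge-modification step; what the paper gains is a self-contained determinant computation with no auxiliary inequalities on $W$. Both routes are clean, and your quadratic check $A^2-(n+10)A-4n^2+22n+8\big|_{A=3n}=2(n-2)^2>0$ together with the monotonicity in $A$ and the side-regime estimate $\lambda_1(R)>R_{11}$ (via $R_{12}R_{21}>0$) closes the case correctly.
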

\begin{proof}
Let $u,\,w$ be the end-vertices of a diametral path of $G$. Let $X$
be a vector, where the values of the components corresponding to
$u,w$ are 1, otherwise the value is 0. If $d(u,w)\geq 4$, then we
have
$$\partial_1^Q(G)>\frac{X^{t}\mathcal{Q}(G)X}{X^{t}X}=\frac{D_{u}+D_{w}+2d}{2}\geq \frac{(n-2)d+4d}{2}\geq 2n-4+2d.$$
If $d(u,w)=3$, then we can decompose $V(G)=V_{0}\cup V_{1}\cup V_{2}\cup V_{3}$, where $V_{0}=\{u\}$, $V_{1}=\{v\in V(G): d(u,v)=1\}$, $V_{2}=\{v\in V(G): d(u,v)\geq 2\}\backslash \{w\}$ and $V_{3}=\{w\}$. We may assume that $|V_{1}|=n_{1}$ and $|V_{2}|=n_{2}$. Note that $\partial_1^Q(G)>\partial_1^Q(G+e)$ where $e\notin E(G)$. Then we can add as many edges as possible until $G[V_{i}\cup V_{i+1}]$ is a clique for $i=0,1,2$. We denote the new graph by $G'$. Consider the quotient matrix $R$ of $D^{Q}(G')$ for the partition of the vertex set $V(G')$ into 4 parts $\{u\}$, $V_{1}$, $V_{2}$ and $\{w\}$. This quotient matrix can be expressed as
\[R=\left(\begin{array}{cccccccc}
n_1+2n_2+3 & n_1 & 2n_2 & 3\\
1 & 2n_1+n_2+1 & n_2 & 2 \\
2 & n_1 & n_1+2n_2+1 & 1\\
3 & 2n_1 & n_2 & 2n_1+n_2+3
\end{array}\right).
\]
By a simple calculation, we have
$$\mathrm{det}((2n-4+2d)I-R)=-4(n_{1}^{3}+8n_{1}^{2}+15n_{1}+n_{2}^{3}+8n_{2}^{2}+15n_{2})<0,$$
this implies $\lambda_{1}(R)>2n-4+2d$. Consequently, we have $\partial_1^Q(G)\geq \partial_1^Q(G')\geq \lambda_{1}(R)>2n-4+2d$, as required.\hspace*{\fill}$\Box$
\end{proof}

Let $G$ be a connected graph of order $n$. It is easy to see that $\partial^Q_1(G)\geq\partial^L_1(G)$. Further, if $n\geq 3$, then $\partial^Q_1(G)>\partial^L_1(G)$. However, a question arises: how large the value of $\partial^Q_1(G)-\partial^L_1(G)$ can be? We give an upper bound for $ \partial^Q_1(G)-\partial^L_1(G)$ when the diameter is small. In order to prove the result, we need the following lemma.

 \begin{lemma} {\bf(\cite{AM})} \label{dv1} Let $S^{+}_n$ be the unicyclic graph obtained from star $S_{n}$ by adding an edge. Then the distance signless Laplacian
 spectral radius of $S^{+}_n$ is the largest root of the following equation:
     $$f(x)=0,$$
 where $f(x)=x^3-(7n-15)x^2+(14n^2-63n+72)x-(8n^3-52n^2+108n-68).$
 \end{lemma}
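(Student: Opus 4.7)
The plan is to exploit the symmetry of $S_n^+$ via an equitable partition and read off $\partial_1^Q(S_n^+)$ as the spectral radius of a $3\times 3$ quotient matrix.

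First, I would set up the graph explicitly: let $v_0$ be the center of the star, let $v_1,v_2$ be the two leaves joined by the extra edge, and let $v_3,\ldots,v_{n-1}$ be the remaining $n-3$ pendant leaves. A direct inspection shows $d(v_0,v_i)=1$ for all $i\geq 1$, $d(v_1,v_2)=1$, and every other pair of vertices is at distance $2$. Consequently the transmissions are
$$Tr(v_0)=n-1,\qquad Tr(v_1)=Tr(v_2)=2n-4,\qquad Tr(v_i)=2n-3\ \text{for }i\geq 3.$$

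Second, I would verify that the partition $\pi=\{\{v_0\},\,\{v_1,v_2\},\,\{v_3,\ldots,v_{n-1}\}\}$ is equitable for $\mathcal{Q}(S_n^+)$. This is essentially automatic: swapping $v_1\leftrightarrow v_2$ or permuting $v_3,\ldots,v_{n-1}$ is a graph automorphism, so the row sums of $\mathcal{Q}(S_n^+)$ within each block are constant on each part. Computing these block row sums from the data above yields the quotient matrix
$$R=\begin{pmatrix} n-1 & 2 & n-3 \\ 1 & 2n-3 & 2n-6 \\ 1 & 4 & 4n-11 \end{pmatrix}.$$

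Third, I would expand $\det(xI-R)$ and check that it equals $f(x)=x^3-(7n-15)x^2+(14n^2-63n+72)x-(8n^3-52n^2+108n-68)$. The trace of $R$ is $7n-15$, matching the coefficient of $x^2$, and the sums of $2\times 2$ principal minors and the determinant match the remaining coefficients; this is the only routine piece of arithmetic in the argument.

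Finally, I would invoke the theory of equitable partitions: because $\pi$ is equitable, every eigenvalue of $R$ is an eigenvalue of $\mathcal{Q}(S_n^+)$, and since the Perron eigenvector of $\mathcal{Q}(S_n^+)$ is forced by the above automorphisms to be constant on each part of $\pi$, the spectral radius of $\mathcal{Q}(S_n^+)$ equals the spectral radius of $R$. Therefore $\partial_1^Q(S_n^+)$ is the largest root of $f(x)=0$. The only real obstacle is the bookkeeping in the determinant computation; conceptually the proof is simply equitable partition plus the symmetry of $S_n^+$.
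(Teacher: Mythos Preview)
Your argument is correct: the partition $\{\{v_0\},\{v_1,v_2\},\{v_3,\dots,v_{n-1}\}\}$ is equitable for $\mathcal{Q}(S_n^+)$ by the evident automorphisms, your quotient matrix $R$ is computed correctly (the trace $7n-15$ and determinant $8n^3-52n^2+108n-68$ both check out), and the Perron eigenvector is constant on each cell, so $\partial_1^Q(S_n^+)$ is indeed the largest root of $\det(xI-R)=f(x)$.

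As for comparison with the paper: note that this lemma is not proved in the paper at all---it is simply quoted from Aouchiche and Hansen \cite{AM}. Your equitable-partition derivation is the natural (and presumably the original) route to such a formula, so there is nothing substantively different to compare; you have filled in what the paper leaves as a citation.
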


 \begin{theorem} Let $G$ be a graph of order $n$ with diameter $d\leq 2$. Then
 \begin{equation}
 \partial^Q_1(G)-\partial^L_1(G)\leq \frac{n-6+\sqrt{9n^2-32n+32}}{2}\label{1asd1}
 \end{equation}
 with equality holding if and only if $G\cong S_{n}$\,.
 \end{theorem}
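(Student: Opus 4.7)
Since the diameter of $G$ is at most $2$, we have $D(G)=2(J-I)-A(G)$, where $A(G)$ is the adjacency matrix. Consequently
$$\mathcal{L}(G)=2nI-2J-L(G),\quad \mathcal{Q}(G)=2(n-2)I+2J-Q(G),$$
where $L(G)$, $Q(G)$ denote the ordinary Laplacian and signless Laplacian. By Lemma \ref{le14-}, $\partial_1^L(G)=2n-\alpha(G)$, where $\alpha(G)$ is the algebraic connectivity, so the stated inequality is equivalent to $\partial_1^Q(G)+\alpha(G)\le \frac{5n-6+\sqrt{9n^2-32n+32}}{2}$. A direct computation with the $2\times 2$ quotient matrix of $\mathcal{Q}(S_n)$ for the partition into the center and the leaves, together with $\alpha(S_n)=1$, shows that $S_n$ saturates this bound; the radicand $9n^2-32n+32$ is precisely the discriminant of the characteristic polynomial of that quotient matrix.

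The plan is to let $X=(x_1,\ldots,x_n)^t$ be the positive unit Perron eigenvector of $\mathcal{Q}(G)$ and set $s=\mathbf{1}^tX\ge 0$. The Perron identity reads $\partial_1^Q(G)=2(n-2)+2s^2-X^tQ(G)X$; writing $\mathcal{Q}(G)X=\partial_1^Q(G)X$ componentwise, multiplying by $x_i$ and summing, gives the supplementary identities $\sum_i d_i x_i=(4n-4-\partial_1^Q(G))s/2$ and $X^tQ(G)X=2s^2+2(n-2)-\partial_1^Q(G)$. Next, using $Y=X-(s/n)\mathbf{1}\perp \mathbf{1}$ as a trial vector in the Rayleigh characterization of $\alpha(G)$ yields $X^tL(G)X\ge \alpha(G)(1-s^2/n)$, and combining this with $L(G)=2\,\mathrm{Diag}(G)-Q(G)$ produces the lower bound
$$\sum_i d_i x_i^2 \ge s^2+(n-2)-\tfrac{\partial_1^Q(G)}{2}+\tfrac{\alpha(G)}{2}-\tfrac{\alpha(G)s^2}{2n}.$$
A second lower bound on the same quantity comes from Cauchy--Schwarz applied to $\sum_i d_i x_i=\sum_i\sqrt{d_i}\cdot\sqrt{d_i}x_i$, namely $\sum_i d_i x_i^2 \ge \frac{(4n-4-\partial_1^Q(G))^2 s^2}{8m}$.

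The heart of the proof --- and the main obstacle --- is combining these two lower bounds with the obvious upper bounds $\sum_i d_i x_i^2\le \Delta(G)\le n-1$ and $s^2\le n$ to eliminate the auxiliary quantities $s^2$, $m$, and $\sum_i d_i x_i^2$ in favour of a single inequality that closes into a quadratic of the form $t^2-(n-6)t-(2n^2-5n-1)\le 0$ in $t:=\partial_1^Q(G)-\partial_1^L(G)$; the positive root of the corresponding equation is exactly $\frac{n-6+\sqrt{9n^2-32n+32}}{2}$. The delicate step is picking the correct convex combination of the Rayleigh and Cauchy--Schwarz bounds so that all slack vanishes precisely at $G\cong S_n$; this requires careful tracking of the $s$-dependence because $s^2$ tends to its maximum $n$ exactly at the star.

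For the equality analysis, one traces the chain of inequalities back: equality in the Rayleigh bound forces $Y$ to be an $\alpha$-eigenvector of $L(G)$, equality in Cauchy--Schwarz forces $x_i\propto \sqrt{d_i}$ (so vertices with the same Perron value have the same degree), and equality in $X^t L(G)X\le 2\sum_i d_i x_i^2$ is automatic. Combining these rigidities with the Perron eigenvalue equation and the diameter-$2$ constraint forces $X$ to take exactly two distinct values (one at a single apex, one at the remaining $n-1$ vertices), the apex to have degree $n-1$, and the others to have degree $1$, so $G\cong S_n$.
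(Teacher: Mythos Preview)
Your proposal is not a proof but a plan, and the plan has a genuine gap at exactly the place you flag as ``the main obstacle.'' You produce two \emph{lower} bounds on the auxiliary quantity $\sum_i d_i x_i^2$ (one from the Rayleigh quotient for $\alpha(G)$, one from Cauchy--Schwarz) and one \emph{upper} bound $\sum_i d_i x_i^2\le \Delta\le n-1$. But two lower bounds on the same quantity do not combine; at best you may compare each separately against the upper bound, and neither comparison yields the desired quadratic in $t=\partial_1^Q(G)-\partial_1^L(G)$. You never exhibit the ``correct convex combination,'' and there is no reason to believe one exists: the inequalities involve the extraneous parameters $s^2$, $m$, and $\Delta$, and the slack in $\sum_i d_i x_i^2\le n-1$ is far too coarse to be tight at $S_n$ (where it becomes $\frac{something}{}\le n-1$, not an equality). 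In short, the identities you set up are correct, but the closing step is simply not carried out, and I do not see how to carry it out along these lines.

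For comparison, the paper's argument is much more elementary and avoids eigenvector analysis altogether. It splits on the minimum degree $\delta$. When $\delta\ge 2$ one uses the crude bounds $\partial_1^Q(G)\le 2D_1$ and $\partial_1^L(G)\ge D_1+2$ (the latter is Theorem~\ref{th9}) together with $D_1=2n-2-\delta$ to get $\partial_1^Q(G)-\partial_1^L(G)\le D_1-2\le 2n-6$, which is strictly below the claimed bound. When $\delta=1$ and $d=2$, one necessarily has $\Delta=n-1$, so $G$ contains $S_n$; if $G\ne S_n$ then $G\supseteq S_n^+$, whence $\partial_1^Q(G)\le \partial_1^Q(S_n^+)$ by edge-deletion monotonicity, and a direct check on the cubic of Lemma~\ref{dv1} finishes the estimate. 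Equality at $S_n$ is then immediate. This case split is what you are missing: rather than trying to control $\partial_1^Q$ and $\partial_1^L$ simultaneously through a single Rayleigh-type inequality, the paper bounds them separately by monotonicity and the structural fact that diameter~$2$ with a pendant vertex forces a spanning star.
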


 \begin{proof} If $d=1$, then $G\cong K_n$ and hence
 $$\partial^Q_1(G)-\partial^L_1(G)=n-2<\frac{n-6+\sqrt{9n^2-32n+32}}{2}\,.$$

 Otherwise, $d=2$. Thus we have $D_i=d_i+2(n-1-d_i)=2n-2-d_i$. Therefore $D_1=2n-2-\delta$. First we assume that $\delta\geq 2$. Recall that $\partial^Q_1(G)\leq 2D_{1}$ and $\partial^L_1(G)\geq D_{1}+2$. Then
 $$\partial^Q_1(G)-\partial^L_1(G)\leq 2D_1-D_1-2=D_1-2=2n-4-\delta\leq 2n-6<\frac{n-6+\sqrt{9n^2-32n+32}}{2}\,.$$

Next we assume that $\delta=1$. Since $d=2$, therefore we have $\Delta=n-1$ and $D_1=2n-3$. If $G\cong S_{n}$, then the equality holds
in (\ref{1asd1}). Otherwise, $m\geq n$ and $\Delta=n-1$ and hence $G\supseteqq S^{+}_n$\,. By Theorem \ref{th9}, $\partial^L_1(G)\geq D_1+2=2n-1$.
Then by Lemmas \ref{dv1}, we obtain $\partial^Q_1(G)\leq a$, where $a$ is the largest root of the equation  $f(x)=0,$
where
$$f(x)=x^3-(7n-15)x^2+(14n^2-63n+72)x-(8n^3-52n^2+108n-68).$$
Therefore we have to prove that
$$\partial^Q_1(G)-\partial^L_1(G)\leq a-2n+1\leq \frac{n-6+\sqrt{9n^2-32n+32}}{2}\,,$$
that is,
\begin{equation}
a\leq \frac{5n-8+\sqrt{9n^2-32n+32}}{2}\,.\label{dve1}
\end{equation}
We have $f(b)=12n+4\sqrt{9n^2-32n+32}-20>0$, where
$$b=\displaystyle{\frac{5n-8+\sqrt{9n^2-32n+32}}{2}}\,.$$
Moreover, since $f(n)>0$ and $f(2n)<0$, it follows that $a<b$. This completes the proof.\hspace*{\fill}$\Box$
 \end{proof}

Now, we consider the smallest eigenvalue of the distance signless Laplacian matrix.
\begin{theorem}\label{qs1}
Let $G$ be a connected graph with order $n$ and Wiener index $W$.  Then $$\partial_n^Q(G)\leq \frac{2W}{n}-1.$$
Moreover, If $G\cong K_n,$ then the equality holds.
\end{theorem}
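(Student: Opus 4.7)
The plan is to apply the Rayleigh--Ritz variational characterization
$$\partial_n^Q(G)=\min_{x\neq 0}\frac{x^{t}\mathcal{Q}(G)x}{x^{t}x}$$
with a family of well-chosen test vectors, and then average the resulting bounds. The key observation is that the trivial estimate $\partial_n^Q(G)\leq \frac{2W}{n}$ (coming from the fact that $\sum_i \partial_i^Q(G)=\mathrm{tr}\,\mathcal{Q}(G)=2W$) falls just $1$ short of the target; the ``$-1$'' has to come from somewhere, and the clean place to harvest it is the off-diagonal contribution of $\mathcal{Q}(G)$.

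First I would consider, for each pair of distinct vertices $v_i,v_j$, the test vector $x_{ij}=e_i-e_j$. Since $\mathcal{Q}(G)_{ii}=Tr(v_i)$ and $\mathcal{Q}(G)_{ij}=d(v_i,v_j)$, a direct expansion gives $x_{ij}^{t}\mathcal{Q}(G)x_{ij}=Tr(v_i)+Tr(v_j)-2d(v_i,v_j)$ and $x_{ij}^{t}x_{ij}=2$, yielding
$$\partial_n^Q(G)\leq \frac{Tr(v_i)+Tr(v_j)-2d(v_i,v_j)}{2}$$
for every $i\neq j$. In particular, the minimum over pairs is bounded above by the average over pairs, so I would next average. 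Using $\sum_{i<j}\bigl(Tr(v_i)+Tr(v_j)\bigr)=(n-1)\sum_{i}Tr(v_i)=2(n-1)W$ together with $\sum_{i<j}d(v_i,v_j)=W$, the average works out to
$$\frac{1}{\binom{n}{2}}\sum_{i<j}\frac{Tr(v_i)+Tr(v_j)-2d(v_i,v_j)}{2}=\frac{2(n-2)W}{n(n-1)},$$
and this quantity therefore bounds $\partial_n^Q(G)$ from above.

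Finally I would compare $\frac{2(n-2)W}{n(n-1)}$ with the target $\frac{2W}{n}-1$. Clearing denominators, the inequality $\frac{2(n-2)W}{n(n-1)}\leq \frac{2W}{n}-1$ is equivalent to $2W\geq n(n-1)$, which holds for every connected graph on $n$ vertices because each of the $\binom{n}{2}$ pairwise distances is at least $1$, so $W\geq \binom{n}{2}$. For the equality statement, when $G\cong K_n$ one has $Tr(v_i)=n-1$ for all $i$ and $\mathcal{Q}(K_n)=(n-2)I+J$, whose spectrum is $\{2n-2,\,(n-2)^{n-1}\}$; hence $\partial_n^Q(K_n)=n-2=\frac{2W}{n}-1$ as required.

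The only non-routine ingredient is the choice of the test vector $e_i-e_j$: it is what produces the $-2d(v_i,v_j)$ term whose averaged contribution delivers precisely the extra $-1$ beyond the trivial averaging bound. Everything else is arithmetic together with the elementary inequality $W\geq \binom{n}{2}$.
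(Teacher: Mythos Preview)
Your argument is correct. Both your proof and the paper's rest on the same key device: for any pair $v_i\neq v_j$, the test vector $e_i-e_j$ in the Rayleigh quotient for $\mathcal{Q}(G)$ gives $\partial_n^Q(G)\leq \tfrac{1}{2}\bigl(Tr(v_i)+Tr(v_j)\bigr)-d(v_i,v_j)$. The completion differs slightly. The paper applies this to the single pair $(u,v)$ with the two smallest transmissions $D_n,D_{n-1}$ and concludes directly from $\tfrac{1}{2}(D_n+D_{n-1})\leq \tfrac{2W}{n}$ and $d(u,v)\geq 1$. You instead average the inequality over all $\binom{n}{2}$ pairs to obtain the intermediate bound $\partial_n^Q(G)\leq \tfrac{2(n-2)W}{n(n-1)}$, and then reduce to the stated inequality via $W\geq \binom{n}{2}$. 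Your route is a touch longer but yields a bound that is actually strictly sharper than $\tfrac{2W}{n}-1$ whenever $G\not\cong K_n$; the paper's route is shorter and gives a different (incomparable) intermediate inequality $\partial_n^Q(G)\leq \tfrac{1}{2}(D_n+D_{n-1})-1$, which also refines the theorem and immediately yields their Corollary that $\partial_n^Q(G)\leq D_n-1$ when two vertices share the minimum transmission.
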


\begin{proof}
Suppose that the transmissions are $D_1\geq D_2\geq \dots\geq D_n$ and $u, v$ are the vertices with $D_u=D_n$ and
$D_v=D_{n-1}$. Giving a vector $X=(x_1,\ldots,x_n)^t$ where
\[x_i=\left\{\begin{array}{ccccccc}
1, \ \ \mbox{if $v_i=v_n$},\\
-1, \ \ \mbox{if $v_i=v_{n-1}$},\\
0, \ \ \mbox{otherwise}.
\end{array}\right.
\]
Then $$\partial_n^Q\leq \frac{X^t\mathcal{Q}X}{X^tX}=\frac{D_n+D_{n-1}-2d(u,v)}{2}\leq \frac{2W}{n}-1.$$
If $G\cong K_n,$ then $\partial_n^Q=n-2=\frac{2W}{n}-1.$\hspace*{\fill}$\Box$
\end{proof}

In view of the proof of Theorem \ref{qs1}, if $D_{u}=D_{v}=D_{n}$, then we obtain the following corollary.

\begin{corollary}\label{cor1}
If $G$ has two vertices with row sum $D_n$, then $\partial_n^Q(G)\leq D_n-1.$
\end{corollary}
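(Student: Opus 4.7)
The plan is to specialize the Rayleigh-quotient argument used in the proof of Theorem~\ref{qs1}. Under the hypothesis, there exist two distinct vertices $u$ and $v$ of $G$ with $Tr(u) = Tr(v) = D_n$. I would reuse the same shape of test vector: define $X \in \mathbb{R}^n$ by $x_u = 1$, $x_v = -1$, and $x_w = 0$ for every other vertex $w$. Then $X^t X = 2$.

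Next, I would evaluate $X^t \mathcal{Q}(G) X = X^t Tr(G) X + X^t D(G) X$ by direct bookkeeping. The diagonal part contributes $x_u^2\, Tr(u) + x_v^2\, Tr(v) = 2 D_n$. For the distance matrix, the diagonal entries vanish, and only the two symmetric off-diagonal positions $(u,v)$ and $(v,u)$ give nonzero contributions, yielding $2 x_u x_v\, d(u,v) = -2 d(u,v)$. Applying the Rayleigh bound for the smallest eigenvalue,
$$\partial_n^Q(G) \;\leq\; \frac{X^t \mathcal{Q}(G) X}{X^t X} \;=\; \frac{2 D_n - 2 d(u,v)}{2} \;=\; D_n - d(u,v).$$
Since $u$ and $v$ are distinct vertices of $G$, we have $d(u,v) \geq 1$, and the claimed inequality $\partial_n^Q(G) \leq D_n - 1$ follows at once.

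There is essentially no obstacle: the corollary is simply the argument of Theorem~\ref{qs1} rerun with $D_{n-1}$ replaced by $D_n$, which is precisely what the hypothesis (two vertices attaining the minimum transmission) allows. The only calculation worth doing carefully is the sign of the cross-term in $X^t D(G) X$, and that is routine.
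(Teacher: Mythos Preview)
Your argument is correct and is precisely the paper's own approach: the corollary is stated immediately after Theorem~\ref{qs1} with the remark that when $D_u=D_v=D_n$ the Rayleigh-quotient computation there yields $\partial_n^Q(G)\leq D_n-d(u,v)\leq D_n-1$. Your write-up simply makes that specialization explicit.
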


Further, for an arbitrary connected graph, one can see that $\partial_n^Q(G)$ is less than $D_{n}$.
\begin{theorem}
Let $G$ be a connected graph with order $n$. Suppose that $D_1\geq D_2\geq \dots\geq D_n$, then $$\partial_n^Q(G)<D_n.$$
\end{theorem}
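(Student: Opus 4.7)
The plan is to apply the Cauchy interlacing theorem (Lemma~\ref{Lem=r1}) to a $2\times 2$ principal submatrix of $\mathcal{Q}(G)$ that exposes the vertex of minimum transmission. The smallest eigenvalue of $\mathcal{Q}(G)$ must lie at or below the smaller eigenvalue of any such $2\times 2$ submatrix, and for the right choice of submatrix this smaller eigenvalue is visibly strictly less than $D_n$.

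More precisely, I would take a vertex $v_n$ realizing the minimum transmission $D_n$ together with any other vertex $v_i$ of $G$, and form the principal submatrix
$$B=\begin{pmatrix} D_n & d(v_n,v_i) \\ d(v_n,v_i) & D_i \end{pmatrix}$$
of $\mathcal{Q}(G)$ indexed by $\{v_n,v_i\}$. By Lemma~\ref{Lem=r1} with $m=2$, we have $\partial_n^Q(G)\leq \lambda_2(B)$, where $\lambda_2(B)$ denotes the smaller eigenvalue of $B$. A direct computation gives
$$\lambda_2(B)=\frac{D_n+D_i-\sqrt{(D_i-D_n)^2+4\,d(v_n,v_i)^2}}{2},$$
and the strict inequality $\lambda_2(B)<D_n$ reduces after one rearrangement to $d(v_n,v_i)^2>0$, which holds because $G$ is connected and $v_n\neq v_i$ force $d(v_n,v_i)\geq 1$. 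Chaining the two inequalities yields $\partial_n^Q(G)<D_n$.

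I do not anticipate any serious obstacle; the only point to check is that the interlacing direction is the one needed, namely $\lambda_n(A)\leq \lambda_2(B)$ for a $2\times 2$ principal submatrix $B$ of an $n\times n$ Hermitian $A$, which is exactly the $m=2$, $i=2$ instance of Lemma~\ref{Lem=r1}. As a sanity check, an alternative argument bypasses interlacing entirely: if one assumes $\partial_n^Q(G)\geq D_n$, then $\mathcal{Q}(G)-D_n I$ is positive semidefinite, yet its $(n,n)$-entry equals $0$ while its $(n,i)$-entry equals $d(v_n,v_i)\geq 1>0$ for each $i\neq n$, contradicting the standard fact that a PSD matrix with a zero diagonal entry must have the corresponding row and column identically zero.
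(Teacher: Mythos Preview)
Your proof is correct, but it follows a different route from the paper's. The paper takes the coordinate vector $X=e_n$ (the indicator of a vertex of minimum transmission) as a test vector in the Rayleigh quotient to obtain $\partial_n^Q(G)\leq X^t\mathcal{Q}(G)X/X^tX=D_n$, and then argues strictness separately: if equality held, $X$ would be an eigenvector for $\partial_n^Q(G)$ and hence orthogonal to the Perron eigenvector of $\mathcal{Q}(G)$, which is impossible since that Perron vector is positive while $X$ is nonnegative and nonzero. Your interlacing argument instead pairs $v_n$ with any other vertex and reads off the smaller eigenvalue of the resulting $2\times2$ principal submatrix; the strictness then falls out of the explicit formula because $d(v_n,v_i)\geq 1$. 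What you gain is that the strict inequality requires no auxiliary orthogonality step, and in fact you obtain the quantitative bound $\partial_n^Q(G)\leq \tfrac{1}{2}\bigl(D_n+D_i-\sqrt{(D_i-D_n)^2+4\,d(v_n,v_i)^2}\bigr)$ for every choice of $v_i$; in particular, choosing $v_i$ with $D_i=D_n$ when such a second vertex exists immediately recovers Corollary~\ref{cor1}. The paper's approach, on the other hand, is slightly more elementary in that it avoids invoking interlacing altogether. Your alternative PSD argument is also valid and is essentially the contrapositive of the paper's orthogonality step, phrased in matrix rather than vector language.
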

\begin{proof}
Suppose that $u$ is the vertex with $D_u=D_n$. Giving a vector
$X=(x_1,\ldots,x_n)^t$, where
\[x_i=\left\{\begin{array}{ccccccc}
1, \ \ \mbox{if $v_i=v_n$},\\
0, \ \ \mbox{otherwise}.
\end{array}\right.
\]
Then $$\partial_n^Q\leq  \frac{X^t\mathcal{Q}X}{X^tX}=D_n.$$
If $X$ is the eigenvector of $D^Q$ corresponding to $\partial_n^Q(G)$, then it must orthogonal to eigenvector of $\partial_1^Q(G)$.
It follows that $X$ is not the eigenvector of $D^Q$, then $\partial_n^Q(G)<D_n.$\hspace*{\fill}$\Box$
\end{proof}

We remark that there exist graphs such that $D_n>\partial_n^Q(G)>D_{n}-1$. Recall that the smallest distance signless Laplacian eigenvalue of $S_{n}$ is $\frac{5n-8-\sqrt{9n^2-32n+32}}{2}$~(see \cite{AM}) and the smallest transmission of $S_{n}$ is $n-1$. It is easy to check that $$\frac{5n-8-\sqrt{9n^2-32n+32}}{2}>n-2$$ for $n\geq 3$. That is $D_n>\partial_n^Q(S_{n})>D_{n}-1$. Thus a question naturally arises: which graphs satisfy $D_n>\partial_n^Q(G)>D_{n}-1$.

\section{The distance signless Laplacian spectral radius of unicyclic graph}
\hspace*{\parindent}A \emph{kite} $Ki_{n,3}$ is the unicyclic graph obtained from a cycle $C_{3}$ and a path $P_{n-3}$ by adding an edge between an endpoint of the path and a vertex of $C_{3}$~(see Fig.3). In \cite{AM3}, Aouchiche and Hansen proposed the following conjecture.

\begin{conjecture}
 Let $G$ be a connected unicyclic graph on $n\geq 6$ vertices. Then $\partial^Q_1(G)\leq \partial^Q_1(Ki_{n,3})$ with equality holds if and only if $G\cong Ki_{n,3}$.
\end{conjecture}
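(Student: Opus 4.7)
The plan is to show that any extremal unicyclic graph $G$ on $n \geq 6$ vertices attaining $\max \partial_1^Q$ must be the kite $Ki_{n,3}$, via three reductions. Throughout I exploit that $\mathcal{Q}(G)$ is entry-wise non-negative and irreducible, so by Perron--Frobenius its spectral-radius eigenvector $X$ is strictly positive, and
\[
\partial_1^Q(G)\ =\ X^t\mathcal{Q}(G)X\ =\ \sum_{\{v_i,v_j\}\subseteq V(G)} d(v_i,v_j)(x_i+x_j)^2.
\]

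First I would develop distance signless Laplacian analogues of Theorems~\ref{th16} and \ref{th18} (and Corollary~\ref{co17}). Namely, for $G_{k,l}(u)$ (two pendant paths of lengths $k-1$ and $l-1$ at a common vertex $u$) and for $G_{k,l}(u_1,v_1)$ (the twin-vertex version), I would prove
\[
\partial_1^Q\bigl(G_{k+1,l-1}(u)\bigr)\ \geq\ \partial_1^Q\bigl(G_{k,l}(u)\bigr), \qquad \partial_1^Q\bigl(G_{k+1,l-1}(u_1,v_1)\bigr)\ \geq\ \partial_1^Q\bigl(G_{k,l}(u_1,v_1)\bigr)
\]
for $k \geq l \geq 2$, with strict inequality when $l = 2$. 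The proofs would mirror those in Section~5: take the Perron eigenvector $X$ of the shorter-path side, build a relabeled test vector on the longer-path side, and compare Rayleigh quotients. The key change from the Laplacian setting is that the bilinear form now uses $(x_i+x_j)^2$ instead of $(x_i-x_j)^2$, so instead of relying on $X \perp \mathbf{1}$, I would use the strict positivity of $X$ to show that the swapped cross-terms combine favourably.

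Applying these two transformations to the extremal graph $G$ as in the proof of Theorem~\ref{th19} (Claims~1--3), one forces $G$ to have the form $U_k$: a cycle $C_k$ on $k$ vertices with a single pendant path of length $n - k$ attached at one cycle vertex (so that $U_3 \cong Ki_{n,3}$). It then remains to show $\partial_1^Q(U_{k-1}) > \partial_1^Q(U_k)$ for all $4 \leq k \leq n$, which forces $k = 3$ and concludes $G \cong Ki_{n,3}$. Since $\mathrm{diam}(U_k) = n - \lceil k/2 \rceil$ is non-increasing in $k$, and the Perron eigenvector of $U_k$ is heaviest at the far end of the pendant path (where transmissions peak), the intuition is clear; to make this rigorous, my plan is to delete the cycle edge of $U_k$ opposite the pendant attachment (obtaining the path $P_n$, with $\partial_1^Q(P_n) > \partial_1^Q(U_k)$ by Lemma~\ref{zq2}), and then compare the losses incurred when re-adding a chord to form $U_{k-1}$ versus $U_k$ from $P_n$.

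The main obstacle is precisely this last comparison: edge deletion strictly increases $\partial_1^Q$, but re-addition strictly decreases it, and controlling the magnitudes of these opposing changes is delicate. A promising route is an explicit Rayleigh-quotient computation using the Perron eigenvector of $P_n$ as a common test vector for both $U_{k-1}$ and $U_k$, and showing that the penalty from the cycle-closing chord is larger in the $U_k$ case (because the shorter chord collapses more pairwise distances $d(v_i,v_j)$, each weighted by $(x_i+x_j)^2$ with Perron coordinates still concentrated near the far end of $P_n$) than in the $U_{k-1}$ case. Should this fail, an alternative is a direct quadratic-form comparison between $U_k$ and $U_{k-1}$ via an explicit vertex identification, partitioning the distance differences into ``cycle--cycle'', ``path--path'', and ``cycle--path'' contributions and handling each by the monotone behaviour of Perron coordinates along the pendant path. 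Either way, the hard part is the cycle-length reduction rather than the tree-shape reduction.
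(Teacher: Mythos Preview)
Your plan diverges from the paper's and contains a genuine gap in the reduction step.

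First, a minor point: the signless-Laplacian graft transformations you propose to derive are exactly Lemmas~\ref{edgeDQ} and~\ref{edgeDQ1}, already established in \cite{LH1} with strict inequalities, so there is no need to reprove them.

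The substantive gap is your ``reduce to $U_k$'' step. You invoke the proof of Theorem~\ref{th19} (Claims~1--3), but that argument relies essentially on $S$ being a \emph{clique}: whenever two cut-edges land on distinct vertices of $S$, those vertices are adjacent twins ($N_G(u_1)\setminus\{v_1\}=N_G(v_1)\setminus\{u_1\}$), which is precisely the hypothesis of Lemma~\ref{edgeDQ1}. For a cycle $C_k$ with $k\geq 4$, two adjacent cycle vertices are \emph{not} twins (their other cycle-neighbours differ), and two non-adjacent cycle vertices are not even adjacent, so neither transformation lets you migrate a pendant path from one cycle vertex to another. Thus you cannot force the extremal graph to be $U_k$ by these tools alone, and your subsequent ``hard step'' $\partial_1^Q(U_{k-1})>\partial_1^Q(U_k)$ is moot until this is repaired.

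The paper sidesteps both difficulties. For $4\le k\le n-1$ it does not reduce to $U_k$ at all: instead it deletes the cycle edge farthest from a branching vertex (strictly increasing $\partial_1^Q$ by Lemma~\ref{zq2}), obtains a tree, and then uses only the single-vertex transformation (Lemma~\ref{edgeDQ}) to bound that tree by the fixed T-shape tree $T^*=T(2,2,n-5)$. A direct Rayleigh-quotient comparison (Lemma~\ref{max}, handling small $n$ numerically) then gives $\partial_1^Q(T^*)<\partial_1^Q(Ki_{n,3})$. The case $k=4$ is treated separately via Lemma~\ref{max1} ($U^4_{n_1,n_2}<U^3_{n_1,n_2}$), and $k=n$ by an explicit Wiener-index bound. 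This route never needs to compare two unicyclic graphs with different cycle lengths, which is exactly the comparison you identified as the obstacle.
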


 Lin and Zhou \cite{LZ1} proved that the conjecture is correct when the length of the cycle is odd. Here, we completely solve the conjecture. In order to prove this conjecture, we need the following results.

 \begin{lemma} {\bf(\cite{LH1})}\label{edgeDQ}
Let $G_{k,l}(u)$ be the graph defined above. If $k\geq l\geq 2$, then $\partial_{1}^{Q}(G_{k+1,l-1}(u))>\partial_{1}^{Q}(G_{k,l}(u)).$
\end{lemma}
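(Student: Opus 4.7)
I propose to prove Lemma \ref{edgeDQ} by adapting the Rayleigh-quotient grafting technique of Theorem \ref{th16} to the distance signless Laplacian. The plan is to take the positive Perron eigenvector $X$ of $\mathcal{Q}(G_{k,l}(u))$ and build a norm-preserving test vector $Z$ on $G_{k+1,l-1}(u)$ with $Z^{\top}\mathcal{Q}(G_{k+1,l-1}(u))Z > X^{\top}\mathcal{Q}(G_{k,l}(u))X = \partial^Q_1(G_{k,l}(u))$. A key ingredient, not needed in the Laplacian setting, is a monotonicity lemma: along each pendant path of $G_{k,l}(u)$, $X$ is strictly increasing away from $u$, i.e.\ $x_u<x_{a_1}<\cdots<x_{a_{k-1}}$ and $x_u<x_{b_1}<\cdots<x_{b_{l-1}}$. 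This follows by subtracting consecutive eigenvalue equations along each pendant path and signing the coefficient of $x_{a_{i+1}}-x_{a_i}$ via the standard-basis Rayleigh bound $\partial^Q_1 \ge \max_v Tr(v)$.

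The test vector $Z$ is a ``slide'' along the paths: $z_w=x_w$ for $w\in V(G)$; $z_{a'_{i+1}}:=x_{a_i}$ for $i=1,\dots,k-1$ (shift $a$-entries outward along the extended $a$-path); $z_{a'_1}:=x_{b_1}$ (fill the vacated base with the displaced $b_1$-entry); and $z_{b'_j}:=x_{b_{j+1}}$ for $j=1,\dots,l-2$ (shift $b$-entries inward along the shortened $b$-path). Since $Z$ is a permutation of the entries of $X$, $\|Z\|=\|X\|$. Enumerating the pairs whose distances change under the corresponding bijection, only four families contribute: $\{a_i,w\}$ for $w\in V(G)$ (distance $+1$); $\{b_j,w\}$ for $j\ge 2$, $w\in V(G)$ (distance $-1$); $\{a_i,b_1\}$ (distance $-1$); and $\{b_j,b_1\}$ for $j\ge 2$ (distance $+1$). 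Substituting into $X^{\top}\mathcal{Q}(G) X = \sum_{\{p,q\}} d(p,q)(x_p+x_q)^2$ and collecting terms yields
\[
\Delta := Z^{\top}\mathcal{Q}(G_{k+1,l-1}(u))Z - X^{\top}\mathcal{Q}(G_{k,l}(u))X = (|V(G)|-1)(A_2-B_2) + 2(W-x_{b_1})(A-B) + (k-l+1)(W_2-x_{b_1}^2),
\]
where $A=\sum_{i=1}^{k-1}x_{a_i}$, $A_2=\sum_{i=1}^{k-1}x_{a_i}^2$, $B=\sum_{j=2}^{l-1}x_{b_j}$, $B_2=\sum_{j=2}^{l-1}x_{b_j}^2$, $W=\sum_{w\in V(G)}x_w$, and $W_2=\sum_{w\in V(G)}x_w^2$.

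The main obstacle is signing $\Delta$. The arithmetic is controlled via the transmission identity $Tr(b_j)-Tr(a_j)=2j(k-l)\ge 0$, a direct count on $G_{k,l}(u)$, which together with the $\mathcal{Q}$-eigenvalue equations at $a_j, b_j$ transfers Perron weight between the two paths and, combined with $k\ge l$ (so $A$ has one more summand than $B$), gives $A\ge B$ and $A_2\ge B_2$; comparing transmissions of $u$-neighbors then yields $W\ge x_{b_1}$ and $W_2\ge x_{b_1}^2$, so each bracketed factor in $\Delta$ is nonnegative. For the strict inequality, paralleling Corollary \ref{co17}: if $\Delta=0$ then $Z$ must itself be a Perron eigenvector of $\mathcal{Q}(G_{k+1,l-1}(u))$ sharing the eigenvalue $\partial^Q_1(G_{k,l}(u))$, and substituting $Z$ into the eigenvalue equations at $a'_1$ and $a'_k$ forces linear relations among $x_{b_1}$, $x_{a_{k-1}}$, $x_{b_{l-1}}$ that contradict the strict monotonicity established in the first paragraph.
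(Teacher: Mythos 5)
First, a point of reference: the paper does not prove Lemma \ref{edgeDQ} at all --- it is imported from \cite{LH1} --- and the closest in-paper analogue is the distance Laplacian graft of Theorems \ref{th16} and \ref{th18}, where the sign problem is handled by symmetrization under the path-swapping automorphism (case $k=l$) and by comparison with a second, reflected test vector (case $k>l$), never by entrywise monotonicity of the eigenvector. Your overall strategy (a permuted Perron test vector in the Rayleigh quotient) is the right one, and your enumeration of the distance changes and the resulting formula for $\Delta$ are correct, but two of your key steps fail. The monotonicity lemma is false: for $G=K_2$, $k=5$, $l=2$ the graph $G_{5,2}(u)$ is the T-shape tree $T(1,1,4)$, and computing the Perron vector of $\mathcal{Q}$ gives $x_{a_1}<x_u$. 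The entries need not increase away from $u$ because the transmissions themselves are not monotone along a pendant path --- they dip toward the metric center of the whole graph before increasing (already here $Tr(a_1)=11<12=Tr(u)$) --- so the ``subtract consecutive eigenvalue equations and sign via $\partial_1^Q\ge\max_v Tr(v)$'' plan cannot be carried out. Since your strict-inequality step at the end rests entirely on this lemma, it collapses.

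Second, the signing of $\Delta$ is not established. Your identity $Tr(b_j)-Tr(a_j)=2j(k-l)\ge 0$ shows the $b$-path vertices have the \emph{larger} transmissions, which pushes Perron weight toward the shorter path (indeed $x_{b_1}=x_{u'}>x_{a_1}$ in the example above); this is the wrong direction for a termwise proof of $A\ge B$ or $A_2\ge B_2$, so those inequalities, if true, must come from $A$ having $k-l+1$ more summands than $B$, which you never control. Worse, $W\ge x_{b_1}$ and $W_2\ge x_{b_1}^2$ genuinely fail when $|V(G)|=1$: for $G=K_1$, $k=4$, $l=2$ one finds $W=x_u<x_{b_1}$ and in fact $\Delta<0$ --- consistently, both $G_{4,2}(u)$ and $G_{5,1}(u)$ are then $P_5$ and the strict conclusion of the lemma itself fails. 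Hence any correct proof must invoke $|V(G)|\ge 2$ somewhere, and your argument never does. To repair the proof, follow the scheme of Theorem \ref{th16}: for $k=l$ replace $X$ by its average with the image under the automorphism swapping the two paths (or by $X$ itself if that average vanishes), forcing $x_{a_i}=x_{b_i}$ so that $\Delta$ becomes a sum of squares of sums of positive entries; for $k>l$ compare $X^{t}\mathcal{Q}X$ with ${X^{*}}^{t}\mathcal{Q}X^{*}$ for the reflected vector $X^{*}$, which is where the positivity of the Perron vector (the genuine advantage of the signless setting) is actually used.
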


\begin{lemma} {\bf(\cite{LH1})}\label{edgeDQ1}
Let $G_{k,l}(u_{1},v_{1})$ be the graph defined above. If $k\geq l\geq 2$, then $\partial^Q_1(G_{k+1,l-1}(u_{1},v_{1}))>\partial^Q_1(G_{k,l}(u_{1},v_{1})).$
\end{lemma}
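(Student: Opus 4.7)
The plan is to mirror Theorem \ref{th16}'s proof (the analogous graft transformation for the distance Laplacian) adapted to the signless Laplacian Rayleigh quotient $X^{t}\mathcal{Q}(G)X=\sum_{\{v_i,v_j\}}d(v_i,v_j)(x_i+x_j)^{2}$, and to sharpen the conclusion to a strict inequality by using positivity of the Perron eigenvector. Set $\partial=\partial_1^{Q}(G_{k,l}(u_1,v_1))$ and $\partial'=\partial_1^{Q}(G_{k+1,l-1}(u_1,v_1))$, and let $X>0$ denote the Perron--Frobenius eigenvector for $\partial$, which is strictly positive and unique up to scaling since $\mathcal{Q}(G_{k,l})$ is irreducible and non-negative. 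I will build a test unit vector $Z$ on $V(G_{k+1,l-1})$ by the same shift used in Theorem \ref{th16}: $z_{u_1}=x_{v_1}$, $z_{u_i}=x_{u_{i-1}}$ for $2\le i\le k+1$, $z_{v_i}=x_{v_{i+1}}$ for $1\le i\le l-1$, and $z_{w_i}=x_{w_i}$. Since $Z$ just permutes the coordinates of $X$, it is again a unit vector.

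To compute $Z^{t}\mathcal{Q}(G_{k+1,l-1})Z$, index the combined path of $G_{k,l}$ as $\alpha_1,\ldots,\alpha_{k+l}$ with $\alpha_i=u_{k+1-i}$ for $i\le k$ and $\alpha_i=v_{i-k}$ for $i\ge k+1$, and analogously $\beta_1,\ldots,\beta_{k+l}$ for $G_{k+1,l-1}$; the construction of $Z$ amounts to $z_{\beta_i}=x_{\alpha_i}$ and $z_{w_r}=x_{w_r}$. A direct check shows that every pairwise distance agrees between the two graphs except for path-to-$w$ pairs, where $d_{G_{k+1,l-1}}(\beta_i,w_r)-d_{G_{k,l}}(\alpha_i,w_r)$ equals $+1$ for $i\le k$, equals $0$ for $i=k+1$, and equals $-1$ for $k+2\le i\le k+l$. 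Hence $\partial'\ge Z^{t}\mathcal{Q}(G_{k+1,l-1})Z=\partial+\Sigma$, where
\[
\Sigma=\sum_{r=1}^{n-k-l}\Biggl[\,\sum_{j=1}^{k}(x_{u_j}+x_{w_r})^{2}-\sum_{j=2}^{l}(x_{v_j}+x_{w_r})^{2}\,\Biggr],
\]
and the task reduces to proving $\Sigma>0$.

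For the case $k>l$, I introduce an auxiliary unit vector $X^{*}$ by permuting the path-entries of $X$ via the reflection $\alpha_i\mapsto\alpha_{k+l+1-i}$ of the combined path (keeping $w$-entries fixed). Since this reflection is a path-isometry, the path-to-path contribution in $X^{*t}\mathcal{Q}(G_{k,l})X^{*}$ equals that in $X^{t}\mathcal{Q}(G_{k,l})X$, so the Rayleigh inequality $X^{*t}\mathcal{Q}(G_{k,l})X^{*}\le\partial$ reduces to the path-to-$w$ comparison and (after translating the reflection back into $u$-$v$ coordinates) yields
\[
\sum_{r}\Biggl[\sum_{j=k-l+1}^{k}(k-l)(x_{u_j}+x_{w_r})^{2}+\sum_{j=1}^{k-l}(2j-k+l-1)(x_{u_j}+x_{w_r})^{2}-\sum_{j=1}^{l}(k-l)(x_{v_j}+x_{w_r})^{2}\Biggr]\ge 0.
\]
Multiplying $\Sigma$ by $k-l\ge 1$, enlarging the $v$-sum from $j=2,\ldots,l$ to $j=1,\ldots,l$ (which only subtracts a non-negative quantity), and then subtracting the displayed inequality leaves the residual $\sum_{r}\sum_{j=1}^{k-l}(2k-2l-2j+1)(x_{u_j}+x_{w_r})^{2}$. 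Since every entry of $X$ is strictly positive and $2k-2l-2j+1\ge 1$ for all $j\le k-l$, this residual is strictly positive, giving $(k-l)\Sigma>0$, hence $\Sigma>0$.

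The main obstacle is the boundary case $k=l$, where the multiplier $k-l$ vanishes and the residual above carries no information. Here I exploit the automorphism $\phi$ of $G_{k,k}(u_1,v_1)$ that swaps $u_i\leftrightarrow v_i$ for every $i$: it is well-defined because $u_1,v_1$ are twins in $G$ and the two grafted paths have identical length. Let $X'$ be the vector with $x'_{u_i}=x_{v_i}$, $x'_{v_i}=x_{u_i}$, $x'_{w_r}=x_{w_r}$; then $X'$ is another strictly positive unit eigenvector for $\partial$, so simplicity of the Perron eigenvalue forces $X'=X$, i.e.\ $x_{u_i}=x_{v_i}$ for every $i$. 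Substituting collapses $\Sigma$ to $\sum_{r=1}^{n-k-l}(x_{u_1}+x_{w_r})^{2}>0$. In every case $\Sigma>0$, yielding $\partial'>\partial$, as required.
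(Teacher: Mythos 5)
The paper does not actually prove this lemma: it is imported verbatim from \cite{LH1} (Lin and Lu), so there is no in-paper proof to compare against. The closest internal analogue is the proof of Theorem \ref{th16}, the distance Laplacian version of the same graft transformation, and your argument is essentially a correct transplant of that proof to the signless Laplacian Rayleigh quotient $\sum d(v_i,v_j)(x_i+x_j)^2$. I checked the key computations: the coordinate shift $Z$ does change path-to-$w_r$ distances by exactly $+1$ on the $u$-side, $0$ at position $k+1$, and $-1$ on the shortened $v$-side (all other distances being preserved), so $\partial'-\partial\geq\mathbf{\Sigma}$ with your $\mathbf{\Sigma}$; the reflection identity for $k>l$ reproduces the coefficients $k-l$ and $2j-k+l-1$ of Theorem \ref{th16}, and the residual $\sum_r\sum_{j=1}^{k-l}(2k-2l-2j+1)(x_{u_j}+x_{w_r})^2$ is indeed what is left after subtracting the Rayleigh inequality from $(k-l)$ times the enlarged sum. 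Your use of Perron--Frobenius is the genuine added value over the paper's Laplacian argument, and it is legitimate here precisely because $\mathcal{Q}$ is entrywise positive (unlike $\mathcal{L}$): strict positivity of $X$ turns both residuals into strictly positive quantities, yielding the strict inequality that Theorem \ref{th16} cannot deliver, and simplicity of the Perron root lets you dispatch the $k=l$ case in one line ($X'=X$, hence $x_{u_i}=x_{v_i}$) instead of the two-subcase analysis of $Y''$ needed in the Laplacian setting. The only hypotheses you rely on implicitly --- that $n>k+l$ guarantees at least one $w_r$, and that $u_1,v_1$ being adjacent twins gives $d(u_1,w_r)=d(v_1,w_r)$ and makes the $u_i\leftrightarrow v_i$ swap an automorphism when $k=l$ --- are both part of the construction, so the proof is complete.
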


\setlength{\unitlength}{0.4pt}
\begin{center}
\begin{picture}(790,333)
\put(65,270){\circle*{4}}
\put(20,310){\circle*{4}}
\qbezier(65,270)(42,290)(20,310)
\put(20,230){\circle*{4}}
\qbezier(65,270)(42,250)(20,230)
\qbezier(20,310)(20,270)(20,230)
\put(111,270){\circle*{4}}
\qbezier(65,270)(88,270)(111,270)
\put(153,270){\circle*{4}}
\qbezier(111,270)(132,270)(153,270)
\put(199,270){\circle*{4}}
\qbezier(153,270)(176,270)(199,270)
\put(246,270){\circle*{4}}
\multiput(199,270)(14.00,0.00){5}{\qbezier(0,0)(0,0)(7.50,0.00)}\qbezier(239.00,270.00)(246,270)(246,270)
\put(294,270){\circle*{4}}
\qbezier(246,270)(270,270)(294,270)
\put(288,65){\circle*{4}}
\put(238,65){\circle*{4}}
\qbezier(288,65)(263,65)(238,65)
\put(189,65){\circle*{4}}
\multiput(238,65)(-14.00,0.00){5}{\qbezier(0,0)(0,0)(-7.50,0.00)}\qbezier(198.00,65.00)(189,65)(189,65)
\put(139,65){\circle*{4}}
\qbezier(189,65)(164,65)(139,65)
\put(89,65){\circle*{4}}
\qbezier(139,65)(114,65)(89,65)
\put(61,88){\circle*{4}}
\qbezier(89,65)(75,77)(61,88)
\put(31,112){\circle*{4}}
\qbezier(61,88)(46,100)(31,112)
\put(59,45){\circle*{4}}
\qbezier(89,65)(74,55)(59,45)
\put(29,25){\circle*{4}}
\qbezier(59,45)(44,35)(29,25)
\put(485,271){\circle*{4}}
\put(528,271){\circle*{4}}
\qbezier(485,271)(506,271)(528,271)
\put(580,271){\circle*{4}}
\qbezier(528,271)(554,271)(580,271)
\put(442,271){\circle*{4}}
\multiput(485,271)(-14.00,0.00){5}{\qbezier(0,0)(0,0)(-7.50,0.00)}\qbezier(445.00,271.00)(442,271)(442,271)
\put(385,271){\circle*{4}}
\qbezier(442,271)(413,271)(385,271)
\put(648,271){\circle*{4}}
\put(698,271){\circle*{4}}
\qbezier(648,271)(673,271)(698,271)
\put(742,271){\circle*{4}}
\multiput(698,271)(14.00,0.00){5}{\qbezier(0,0)(0,0)(7.50,0.00)}\qbezier(738.00,271.00)(742,271)(742,271)
\put(788,271){\circle*{4}}
\qbezier(742,271)(765,271)(788,271)
\put(616,232){\circle*{4}}
\qbezier(616,232)(598,252)(580,271)
\qbezier(616,232)(632,252)(648,271)
\put(614,313){\circle*{4}}
\qbezier(580,271)(597,292)(614,313)
\put(648,272){\circle*{4}}
\qbezier(648,272)(631,293)(614,313)
\put(487,64){\circle*{4}}
\put(531,64){\circle*{4}}
\put(583,64){\circle*{4}}
\put(443,64){\circle*{4}}
\put(388,64){\circle*{4}}
\put(651,64){\circle*{4}}
\put(702,64){\circle*{4}}
\put(742,64){\circle*{4}}
\put(790,64){\circle*{4}}
\put(618,25){\circle*{4}}
\put(618,104){\circle*{4}}
\put(651,64){\circle*{4}}
\qbezier(487,64)(509,64)(531,64)
\qbezier(531,64)(557,64)(583,64)
\multiput(487,64)(-14.00,0.00){5}{\qbezier(0,0)(0,0)(-7.50,0.00)}\qbezier(447.00,64.00)(443,64)(443,64)
\qbezier(443,64)(415,64)(388,64)
\qbezier(651,64)(676,64)(702,64)
\multiput(702,64)(14.00,0.00){5}{\qbezier(0,0)(0,0)(7.50,0.00)}
\qbezier(742,64)(766,64)(790,64)
\qbezier(618,25)(634,45)(651,64)
\qbezier(583,64)(600,84)(618,104)
\qbezier(651,64)(634,84)(618,104)
\qbezier(618,104)(618,65)(618,25)

\put(137,191){$Ki_{n,3}$}
\put(3,322){$u_{2}$}\put(60,284){$u_1$}\put(3,210){$v_{2}$}\put(96,284){$v_1$}\put(132,284){$w_1$}\put(173,284){$w_2$}\put(219,284){$w_{n-5}$}\put(279,284){$w_{n-4}$}

\put(500,191){$U^{4}_{n_{1},n_{2}}$}
\put(1,124){$u_2$}\put(1,12){$v_2$}\put(53,100){$u_1$}\put(53,24){$v_1$}\put(87,76){$w_1$}\put(127,76){$w_2$}\put(208,76){$w_{n-5}$}\put(271,76){$w_{n-4}$}

\put(141,-18){$T^{*}$}
\put(369,284){$v_{n_1}$}\put(415,284){$v_{n_{1}-1}$}\put(512,284){$v_{2}$}\put(609,324){$w_1$}\put(609,210){$w_2$}\put(553,284){$v_{1}$}\put(646,284){$u_1$}\put(681,284){$u_2$}\put(768,284){$u_{n_2}$}

\put(500,-18){$U^{3}_{n_{1},n_{2}}$}
\put(514,76){$v_{2}$}\put(368,76){$v_{n_{1}}$}\put(564,76){$v_1$}\put(418,76){$v_{n_{1}-1}$}\put(606,116){$w_1$}\put(606,2){$w_2$}\put(648,76){$u_1$}\put(688,76){$u_2$}\put(774,76){$u_{n_2}$}

\end{picture}
\vskip 0.4cm Fig. $3$. Graphs $Ki_{n,3}$, $T^{*}$, $U^{3}_{n_{1},n_{2}}$ and $U^{4}_{n_{1},n_{2}}$.
\end{center}

In the next, we consider some special graphs. A tree is \emph{starlike} if exactly one of its vertices has degree larger than 2, and T-\emph{shape} if it is
starlike with maximal degree 3. We will denote by $T(n_1,n_2,n_3)$ the unique T-shape tree such that $T(n_1,n_2,n_3)-v=P_{n_1}\cup P_{n_2}\cup P_{n_3}$,
where $P_{n_i}$ is the path on $n_i$ vertices ($i = 1, 2, 3$), and $v$ the vertex of degree 3. Let $T^{*}$ denote the T-shape tree $T(2,2,n-5)$. Let $U^{4}_{n_1,n_2}$ be a unicyclic graph obtained from $C_{4}$ by attaching two path $P_{n_{1}-1}$, $P_{n_2-1}$ to a pair nonadjacent vertices of $C_{4}$, where $n_{1}\geq n_{2}$. We denote by  $U^{3}_{n_1,n_2}$ the unicyclic graph obtained from $U^{4}_{n_1,n_2}$ by adding the edge $w_{1}w_{2}$ and deleting the edge $v_{1}w_{2}$ (see Fig 3). The following         lemmas compare the distance signless Laplacian spectral radii of the graphs.

\begin{lemma} \label{max}
Let $T^{*}$ be a tree of order $n\geq 7$ which shown in Fig. 3. Then $\partial^Q_1(T^{*})<\partial^Q_1(Ki_{n,3})$.
\end{lemma}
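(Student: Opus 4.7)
The plan is to apply the Rayleigh variational characterization
\[
\partial^Q_1(G) \;=\; \max_{X \neq 0} \frac{X^t \mathcal{Q}(G) X}{X^t X} \;=\; \max_{X \neq 0} \frac{\sum_{i<j} d_G(v_i, v_j)(x_i + x_j)^2}{X^t X}
\]
to compare the two spectral radii. Let $X$ be a positive Perron eigenvector of $\mathcal{Q}(T^{*})$. Using the automorphism of $T^{*}$ that swaps its two short branches, one may take $x_{a_i} = x_{b_i} =: \alpha_i$ for $i = 1, 2$; write $\beta = x_v$ and $\gamma_i = x_{c_i}$ for $1 \le i \le n-5$. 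I would then transplant $X$ to a test vector $Y$ on $V(Ki_{n,3})$ via a bijection $\sigma$ chosen to exploit the strict inequality $\mathrm{diam}(Ki_{n,3}) = n-2 > n-3 = \mathrm{diam}(T^{*})$.

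A natural candidate is
\[
\sigma\colon\ v \mapsto t_1,\ a_1 \mapsto t_2,\ b_1 \mapsto t_3,\ a_2 \mapsto p_1,\ b_2 \mapsto p_2,\ c_i \mapsto p_{i+2},
\]
with $y_{\sigma(u)} = x_u$, so that $\|Y\|^2 = \|X\|^2$. By the Rayleigh inequality, $\partial^Q_1(Ki_{n,3}) \ge Y^t \mathcal{Q}(Ki_{n,3}) Y / \|Y\|^2$, so the lemma reduces to proving
\[
\sum_{u<u'}\bigl[d_{Ki_{n,3}}(\sigma(u),\sigma(u')) - d_{T^{*}}(u,u')\bigr](x_u+x_{u'})^2 \;>\; 0.
\]
A case check of the pairwise distance differences $\Delta(u,u')$ under this $\sigma$ shows $\Delta = +2$ on all pairs of the form $(\ast, c_i)$ with $\ast \in \{v, a_1, b_1\}$ and on $(b_1, b_2)$, $\Delta = +1$ on $(a_1, a_2)$, and $\Delta \in \{-1, -2, -3\}$ on only a short list of core/pendant pairs (the single largest loss being $\Delta(a_2, b_2) = -3$). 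Grouping the $i$-indexed contributions yields a per-index term
\[
2(\beta+\gamma_i)^2 + 4(\alpha_1+\gamma_i)^2 - 3(\alpha_2+\gamma_i)^2 \;=\; 3\gamma_i^2 + (4\beta+8\alpha_1-6\alpha_2)\gamma_i + (2\beta^2+4\alpha_1^2-3\alpha_2^2),
\]
whose leading piece is $3\gamma_i^2$, plus a bounded constant correction from the ten pairs among $\{v,a_1,b_1,a_2,b_2\}$.

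The main obstacle is verifying that $3\sum_{i=1}^{n-5}\gamma_i^2$ actually dominates the bounded core losses, chiefly the $-12\alpha_2^2$ contribution from $(a_2,b_2) \mapsto (p_1,p_2)$. The essential input is a Perron--Frobenius estimate: a direct calculation gives $D_{c_{n-5}} = (n^2-n-8)/2$, which strictly exceeds every other transmission in $T^{*}$ for $n \ge 7$, so the coordinate $\gamma_{n-5}$ dominates all others. Combining this with the eigenvalue equation at each interior vertex then gives monotonicity of $\gamma_i$ along the tail and quantitative bounds on the ratios $\alpha_i/\gamma_{n-5}$, $\beta/\gamma_{n-5}$. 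With these in hand, the full inequality reduces to a routine algebraic check in $n$ for $n \ge 7$. Should the bijection above yield too small a margin, there is flexibility in redefining $\sigma$ so as to place the largest Perron coordinates of $X$ on the vertices of $Ki_{n,3}$ that maximize the distance gain (notably $p_{n-3}$ and $p_{n-4}$).
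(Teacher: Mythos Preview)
Your general strategy---transplant the Perron eigenvector of $\mathcal{Q}(T^{*})$ to $Ki_{n,3}$ via a bijection and show the Rayleigh quotient strictly increases---is exactly what the paper does. The symmetry reduction $x_{a_i}=x_{b_i}$ is also used there. The difference is in the choice of bijection, and yours makes the remaining work substantially harder than necessary.

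The paper labels both graphs with the \emph{same} vertex names $u_1,u_2,v_1,v_2,w_1,\ldots,w_{n-4}$ (with $w_1$ the degree-$3$ vertex of $T^{*}$, and in $Ki_{n,3}$ the triangle on $\{u_1,u_2,v_2\}$ with pendant path $u_1\!-\!v_1\!-\!w_1\!-\!\cdots\!-\!w_{n-4}$) and simply uses the identity map as $\sigma$. With this choice every per-index distance difference $\Delta(\,\cdot\,,w_i)$ is \emph{nonnegative}: one gets $+1$ for each of $(u_1,w_i)$, $(u_2,w_i)$, $(v_2,w_i)$ and $0$ for $(v_1,w_i)$. After the symmetry $x_{u_1}=x_{v_1}$, $x_{u_2}=x_{v_2}$ the full difference collapses to
\[
\sum_{i=1}^{n-4}(x_{u_1}+x_{w_i})^{2}+2\sum_{i=1}^{n-4}(x_{u_2}+x_{w_i})^{2}-6x_{u_1}^{2}-14x_{u_2}^{2}-4x_{u_1}x_{u_2},
\]
and the trivial bound $(x+y)^{2}>x^{2}$ for positive $x,y$ gives $>(n-4)x_{u_1}^{2}+2(n-4)x_{u_2}^{2}-6x_{u_1}^{2}-14x_{u_2}^{2}-4x_{u_1}x_{u_2}$, which is positive for all $n\geq 14$. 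The cases $7\le n\le 13$ are then dispatched by direct numerical computation of the two spectral radii.

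Your bijection instead sends the two pendant tips $a_2,b_2$ onto the first two path vertices $p_1,p_2$, which introduces the negative per-index contribution $-3(\alpha_2+\gamma_i)^{2}$. That is why you are forced to appeal to Perron--Frobenius ratio estimates and monotonicity of $\gamma_i$ along the tail; none of this is needed with the paper's labeling. Those eigenvector inequalities are plausible but are not the ``routine algebraic check'' you describe---they would require a genuine induction on the eigenvalue equations along the path, and you have not carried this out. In short, the plan is sound but incomplete, and the specific $\sigma$ you chose is the source of the difficulty; the paper's $\sigma$ eliminates the negative per-index term entirely and reduces the problem to an elementary inequality plus a finite check.
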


\begin{proof}
Let $X=(x_{u_{1}},x_{u_{2}},x_{v_{1}},x_{v_{2}},x_{w_{1}},\ldots,x_{w_{n-4}})^t$ be a unit eigenvector corresponding to $\partial^Q_1(T^{*})$. It is obvious that $x_{u_1}=x_{v_1}$ and $x_{u_2}=x_{v_2}$. Then
\begin{eqnarray*}
\partial^Q_1(Ki_{n,3})-\partial^Q_1(T^{*})&\geq& X^{t}\mathcal{Q}(Ki_{n,3})X-X^{t}\mathcal{Q}(T^{*})X\\[3mm]
&=&\sum_{i=1}^{n-4}(x_{u_1}+x_{w_i})^{2}+\sum_{i=1}^{n-4}(x_{u_2}+x_{w_i})^{2}+\sum_{i=1}^{n-4}(x_{v_2}+x_{w_i})^{2}+(x_{v_1}+x_{v_2})^{2}\\
&&-(x_{u_1}+x_{v_1})^2-2(x_{u_1}+x_{v_2})^{2}-(x_{u_2}+x_{v_1})^2-3(x_{u_2}+x_{v_2})^2\\[3mm]
&=&\sum_{i=1}^{n-4}(x_{u_1}+x_{w_i})^{2}+2\sum_{i=1}^{n-4}(x_{u_2}+x_{w_i})^{2}-6x_{u_1}^{2}-14x_{u_2}^{2}-4x_{u_1}x_{u_2}.
\end{eqnarray*}
Note that $X$ is a positive vector. If $n\geq 14$, then we obtain
\begin{eqnarray*}
&&\sum_{i=1}^{n-4}(x_{u_1}+x_{w_i})^{2}+2\sum_{i=1}^{n-4}(x_{u_2}+x_{w_i})^{2}-6x_{u_1}^{2}-14x_{u_2}^{2}-4x_{u_1}x_{u_2}\\[3mm]
&&\geq 10x_{u_1}^{2}+20x_{u_2}^2-6x_{u_1}^{2}-14x_{u_2}^{2}-4x_{u_1}x_{u_2}\\[3mm]
&&=4x_{u_1}^{2}+6x_{u_2}^2-4x_{u_1}x_{u_2}\\[3mm]
&&>0.
\end{eqnarray*}
Therefore, $\partial^Q_1(Ki_{n,3})>\partial^Q_1(T^{*})$. When $7\leq n\leq 13$, by calculation, it is evident to see that the lemma holds (see Table 1). This completes the proof.\hspace*{\fill}$\Box$
\end{proof}

\setlength{\tabcolsep}{9pt} 
\renewcommand\arraystretch{1.4}  
\begin{center}
\begin{tabular}{lccccccccc}
\multicolumn{6}{l}{Table 1: The distance signless Laplaican spectral radii of $Ki_{n,3}$ and $T^{*}$.}\\
\hline
&$n=7$&$n=8$&$n=9$&$n=10$&$n=11$&$n=12$&$n=13$\\
\hline
$\partial^Q_1(Ki_{n,3})$ &31.1081&41.6987&53.7733&67.3260&82.3525&98.8494&116.8142\\
$\partial^Q_1(T^{*})$ &29.5507&38.9173&50.0328 &62.7797&77.0989&92.9528&110.3381\\
\hline
\end{tabular}
\end{center}

\begin{lemma} \label{max1}
Let $U^{4}_{n_1,n_2}$ and $U^{3}_{n_1,n_2}$ be the unicyclic graphs of order $n$. If $n\geq 7$, then $\partial^Q_1(U^{4}_{n_1,n_2})<\partial^Q_1(U^{3}_{n_1,n_2})$.
\end{lemma}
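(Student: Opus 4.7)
The plan is to apply the Rayleigh quotient technique to the Perron eigenvector of $\mathcal{Q}(U^4_{n_1,n_2})$. Let $X$ be the unit Perron eigenvector; by Perron--Frobenius all entries are positive. The crucial structural observation is that $U^4_{n_1,n_2}$ admits the automorphism $w_1 \leftrightarrow w_2$ fixing every other vertex (both $w_1$ and $w_2$ are adjacent to the same pair $\{v_1,u_1\}$ in the $4$-cycle), which forces $x_{w_1}=x_{w_2}$.

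Next, I would compute $X^t\bigl(\mathcal{Q}(U^3_{n_1,n_2})-\mathcal{Q}(U^4_{n_1,n_2})\bigr)X$ using the identity $X^t\mathcal{Q}(G)X=\sum_{a<b}d_G(a,b)(x_a+x_b)^2$. Comparing the two graphs pair by pair, only two types of distances change in passing from $U^4$ to $U^3$: $d(w_1,w_2)$ drops from $2$ to $1$, and $d(v_i,w_2)$ rises from $i$ to $i+1$ for each $1\le i\le n_1$ (every other pairwise distance is preserved). Hence
$$X^t\mathcal{Q}(U^3_{n_1,n_2})X-X^t\mathcal{Q}(U^4_{n_1,n_2})X \;=\; \sum_{i=1}^{n_1}(x_{v_i}+x_{w_2})^2 \;-\;(x_{w_1}+x_{w_2})^2,$$
and using $x_{w_1}=x_{w_2}$ this simplifies to $\sum_{i=1}^{n_1}(x_{v_i}+x_{w_2})^2-4x_{w_2}^2$. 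By Rayleigh, strict positivity of this quantity delivers $\partial^Q_1(U^3_{n_1,n_2})>\partial^Q_1(U^4_{n_1,n_2})$.

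Since each $x_{v_i}>0$, every summand $(x_{v_i}+x_{w_2})^2$ strictly exceeds $x_{w_2}^2$, so the sum strictly exceeds $n_1 x_{w_2}^2$. Whenever $n_1\geq 4$ we obtain a strict excess of at least $4x_{w_2}^2$, finishing the argument. The hypothesis $n\geq 7$ gives $n_1+n_2=n-2\geq 5$ with $n_1\geq n_2\geq 1$, which forces $n_1\geq 3$, and already forces $n_1\geq 4$ as soon as $n\geq 9$; thus this clean argument handles all but finitely many cases.

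The main obstacle is the residual low-$n_1$ regime $n_1=3$, which occurs only for $(n_1,n_2)\in\{(4,1),(3,2)\}$ when $n=7$ and $(n_1,n_2)\in\{(5,1),(4,2),(3,3)\}$ when $n=8$. For the subcases with $n_1=3$ the crude estimate $\sum(x_{v_i}+x_{w_2})^2>3x_{w_2}^2$ falls just short of the required $4x_{w_2}^2$. I would dispose of the two offending instances $U^4_{3,2}$ (order $7$) and $U^4_{3,3}$ (order $8$) by direct numerical evaluation of $\partial^Q_1(U^3_{n_1,n_2})$ and $\partial^Q_1(U^4_{n_1,n_2})$, exactly in the spirit of the table used for Lemma~\ref{max}. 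An alternative, more uniform route would be to exploit the fact that an endpoint of a long pendant path (here $v_{n_1}$) carries large Perron weight, and to show rigorously via the eigenvalue equations at $w_2$ and $v_{n_1}$ that $x_{v_{n_1}}>x_{w_2}$, which would alone force $(x_{v_{n_1}}+x_{w_2})^2>4x_{w_2}^2$; but for just two small graphs the finite check is the cleaner finishing step.
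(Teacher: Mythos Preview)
Your argument is correct and essentially identical to the paper's own proof: the same Rayleigh quotient with the Perron eigenvector of $\mathcal{Q}(U^4_{n_1,n_2})$, the same symmetry $x_{w_1}=x_{w_2}$, the same identity $\sum_{i=1}^{n_1}(x_{v_i}+x_{w_2})^2-4x_{w_2}^2$, the same threshold $n_1\ge 4$, and the same two residual cases $(3,2)$ and $(3,3)$ disposed of by direct calculation.
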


\begin{proof}
Let $X=(x_{w_{1}},x_{w_{2}},x_{v_{1}},\ldots,x_{v_{n_1}},x_{u_{1}},\ldots,x_{u_{n_2}})^t$ be a unit eigenvector corresponding to $\partial^Q_1(U^{4}_{n_1,n_2})$. Clearly, $x_{w_1}=x_{w_2}$. Then we obtain
\begin{eqnarray*}
\partial^Q_1(U^{3}_{n_1,n_2})-\partial^Q_1(U^{4}_{n_1,n_2})&\geq& X^{t}\mathcal{Q}(U^{3}_{n_1,n_2})X-X^{t}\mathcal{Q}(U^{4}_{n_1,n_2})X\\[3mm]
&=&\sum_{i=1}^{n_1}(x_{w_2}+x_{v_i})^{2}-(x_{w_1}+x_{w_2})^{2}\\
&=&\sum_{i=1}^{n_1}(x_{w_2}+x_{v_i})^{2}-4x_{w_2}^{2}
\end{eqnarray*}
Since $X$ is a positive vector, if $n_{1}\geq 4$, then $\sum_{i=1}^{n_1}(x_{w_2}+x_{v_i})^{2}-4x_{w_2}^{2}>n_{1}x_{w_2}^{2}-4x_{w_2}^{2}\geq 0$. The remaining graphs are $U^{4}_{3,3}$ and $U^{4}_{3,2}$. By calculation, we have $\partial^Q_1(U^{4}_{3,2})<\partial^Q_1(U^{3}_{3,2})$ and $\partial^Q_1(U^{4}_{3,3})<\partial^Q_1(U^{3}_{3,3})$. Thus we complete the proof.\hspace*{\fill}$\Box$
\end{proof}

Now we are ready to prove the conjecture.

\begin{theorem}
Let $G$ be a connected unicyclic graph on $n\geq 6$ vertices. Then $\partial^Q_1(G)\leq \partial^Q_1(Ki_{n,3})$ with equality holds if and only if $G\cong Ki_{n,3}$.
\end{theorem}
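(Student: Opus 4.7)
The plan is to combine the odd-cycle case of Lin and Zhou (cited above) with graft reductions and the comparison results proved in this section. Let $G$ be a connected unicyclic graph on $n\ge 6$ vertices attaining $\max\partial_1^Q(G)$, and let $C_\ell$ denote its unique cycle. Applying Lemma~\ref{edgeDQ} iteratively at each cycle vertex, any pendant tree may be merged into a single pendant path with strict increase of $\partial_1^Q$; hence by extremality $G$ consists of $C_\ell$ together with at most one pendant path at each cycle vertex, so $G$ is determined by $\ell$ and a multiset of path lengths. If $\ell$ is odd, the Lin--Zhou result immediately forces $G\cong Ki_{n,3}$, and we are done.

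So assume $\ell=2k\ge 4$. When $\ell=4$, I case-split on the configuration of pendant paths on $C_4$. If two pendant paths sit on non-adjacent cycle vertices, then $G=U^4_{n_1,n_2}$ and Lemma~\ref{max1} yields $\partial_1^Q(G)<\partial_1^Q(U^3_{n_1,n_2})$; since $U^3_{n_1,n_2}$ has a triangular cycle, the odd-cycle case bounds this above by $\partial_1^Q(Ki_{n,3})$. If two pendant paths sit at adjacent cycle vertices, or if three or four paths appear, I consolidate them using Lemmas~\ref{edgeDQ1} and~\ref{edgeDQ} (noting that $C_4$ has the twin pair $v_2,v_4$ sharing the neighborhood $\{v_1,v_3\}$, which lets the graft move across the cycle), thereby reducing to the non-adjacent sub-case. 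For a single pendant path on $C_4$, I delete a cycle edge to obtain a T-shape tree $T$, apply Lemma~\ref{zq2} to get $\partial_1^Q(G)\le \partial_1^Q(T)$, and relate $T$ to the T-shape tree $T^*=T(2,2,n-5)$ of Lemma~\ref{max} through further Lemma~\ref{edgeDQ}-type rearrangements at the center, concluding $\partial_1^Q(G)<\partial_1^Q(T^*)<\partial_1^Q(Ki_{n,3})$.

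For $\ell=2k\ge 6$, I iterate a cycle-shortening step: insert a chord of $C_{2k}$ creating a strictly shorter cycle, delete an edge of the original cycle so that unicyclicity is preserved, and restore the canonical form via the graft transformations. Each iteration is designed to strictly raise $\partial_1^Q$, and after finitely many steps $\ell$ is reduced either to $4$ or to an odd value, both already handled.

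The main obstacle is the $C_4$-with-single-pendant-path sub-case. The graft lemmas of this section do not immediately produce an odd cycle, so the argument must pass through the tree $T^*$ via an edge deletion (Lemma~\ref{zq2}) combined with a rearrangement of T-shape branches and Lemma~\ref{max}; one must verify strict domination at every link of the chain and that the rearrangement is in the correct monotonicity direction for Lemma~\ref{edgeDQ}. A secondary difficulty is the cycle-shortening step for $\ell\ge 6$: it is not subsumed by a single lemma in the excerpt and requires an ad~hoc comparison, very possibly by constructing an explicit test vector (derived from the Perron-type eigenvector of the shorter-cycle graph) and invoking the Rayleigh quotient characterization of $\partial_1^Q$.
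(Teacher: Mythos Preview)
Your proposal has the right ingredients but takes an unnecessarily complicated detour for even cycles of length $\ge 6$. The cycle-shortening step you propose is, as you concede, not covered by any available lemma and would require an ad~hoc Rayleigh-quotient construction; this is the principal gap, and it is avoidable.

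The paper sidesteps it by observing that the edge-deletion idea you already invoke for the ``$C_4$ with a single pendant path'' sub-case actually works uniformly for every cycle length $k$ with $4\le k\le n-1$. One picks any cycle vertex $v$ with $d(v)\ge 3$, deletes the cycle edge $e$ farthest from $v$, and applies Lemma~\ref{zq2} to obtain $\partial_1^Q(G)<\partial_1^Q(G-e)$; the tree $G-e$ has two branches at $v$ of length $\ge 2$ coming from the former cycle, and repeated application of Lemma~\ref{edgeDQ} drives it up to $T^*=T(2,2,n-5)$, whence Lemma~\ref{max} finishes. No cycle-shortening and no chord insertions are needed. In particular the paper does not invoke the Lin--Zhou odd-cycle result at all: $k=3$ is handled directly by Lemmas~\ref{edgeDQ} and~\ref{edgeDQ1}, $k=4$ by the comparison $\partial_1^Q(U^4_{n_1,n_2})<\partial_1^Q(U^3_{n_1,n_2})$ of Lemma~\ref{max1}, and all remaining $k$ (with a pendant) by the tree reduction just described.

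You also omit the pure-cycle case $G\cong C_n$, which the paper treats separately by comparing the explicit value $\partial_1^Q(C_n)\le n^2/2$ against the lower bound $\partial_1^Q(Ki_{n,3})\ge 4W(Ki_{n,3})/n$. Finally, a caution on your $\ell=4$ adjacent-paths reduction: Lemma~\ref{edgeDQ1} as stated requires the two base vertices to be \emph{adjacent} with equal neighborhoods, and the pair $v_2,v_4$ in $C_4$ you cite as twins are not adjacent, so that move needs a small adaptation rather than a direct citation.
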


\begin{proof}
Let $G$ be a connected unicyclic graph of order $n\geq 6$. It is easy to check that the result holds when $n=6$. We may assume that $C_{k}$ is the unique cycle in $G$ and $n\geq 7$.

\noindent{\bf Claim 1.} If $k=3$, then $\partial^Q_1(G)\leq \partial^Q_1(Ki_{n,3})$ with equality holds if and only if $G\cong Ki_{n,3}$.

Suppose that $G\ncong Ki_{n,3}$. If there is only one vertex in $C_{3}$ whose degree is at least 3, then it follows from Lemma \ref{edgeDQ} that $\partial^Q_1(G)< \partial^Q_1(Ki_{n,3})$. Otherwise, according to Lemma \ref{edgeDQ1}, we also have $\partial^Q_1(G)< \partial^Q_1(Ki_{n,3})$.

\noindent{\bf Claim 2.} If $k=4$, then $\partial^Q_1(G)<\partial^Q_1(Ki_{n,3})$.

If there are two adjacent vertices in $C_{4}$ whose degree is at least 3, then according to Lemma \ref{edgeDQ1} it follows that $\partial^Q_1(G)< \partial^Q_1(U^{4}_{n_1,n_2})$. Moreover, since $\partial^Q_1(U^{4}_{n_1,n_2})<\partial^Q_1(U^{3}_{n_1,n_2})$, then it follows from claim 1 that $\partial^Q_1(U^{4}_{n_1,n_2})<\partial^Q_1(Ki_{n,3})$. Thus claim holds.

\noindent{\bf Claim 3.} If $4\leq k\leq n-1$, then $\partial^Q_1(G)<\partial^Q_1(Ki_{n,3})$.

Let $v$ be a vertex in $C_{k}$ with $d(v)\geq 3$ and $e$ be an edge of  $C_{k}$ which has the longest distance to $v$. It is easy to see that $G-e$ is a tree. According to Lemma \ref{edgeDQ}, it follows that $\partial^Q_1(G-e)\leq \partial^Q_1(T^{*})$. Consequently, we have $\partial^Q_1(G)<\partial^Q_1(G-e)\leq \partial^Q_1(T^{*})<\partial^Q_1(Ki_{n,3})$.

\noindent{\bf Claim 4.} $\partial^Q_1(C_{n})<\partial^Q_1(Ki_{n,3})$.

Clearly, the distance Laplacian spectral radius of $C_{n}$ is
\[
\partial^Q_1(C_{n})=\left\{\begin{array}{ccccccc}
\frac{n^2}{2}, \ \ \mbox{if $n$ is even},\\
\frac{n^2-1}{2}, \ \ \mbox{if $n$ is odd}.
\end{array}\right.
\]
Note that the Wiener index of $Ki_{n,3}$ is $W(Ki_{n,3})=\frac{1}{6}n(n-1)(n-2)+\frac{1}{2}(n-1)(n-2)+2$. Then we have
$$\partial^Q_1(Ki_{n,3})\geq \frac{4W(Ki_{n,3})}{n}>\frac{2}{3}(n-1)(n-2)+\frac{2}{n}(n-1)(n-2)>\frac{1}{2}n^2\geq \partial^Q_1(C_{n}),$$
when $n\geq 7$. \hspace*{\fill}$\Box$
\end{proof}

\small {

}

\end{document}